\documentclass[10pt]{article}
\usepackage[utf8]{inputenc}
\usepackage[a4paper, left=2cm,
  right=1.5cm,
  top=2.5cm,
  bottom=2.5cm]{geometry}

\usepackage{amsmath} 
\usepackage{bbm}
\usepackage{amsfonts,amssymb,amsthm}
\usepackage{graphicx}
\usepackage[colorlinks,linkcolor=Blue,
            anchorcolor=red,
            citecolor=orange,]{hyperref}
\usepackage{mathrsfs}
\usepackage{enumerate}
\usepackage{indentfirst}
\usepackage{geometry}
\usepackage{url}
\usepackage[usenames,dvipsnames,svgnames,table]{xcolor}

\allowdisplaybreaks[4]
\numberwithin{equation}{section}

\newcommand{\dd}{\mathrm{d}}
\newcommand{\ind}{\mathbbm{1}}

\newtheorem{definition}{Definition}[section]
\newtheorem{theorem}[definition]{\textbf{Theorem}}
\newtheorem{lemma}[definition]{\textbf{Lemma}}
\newtheorem{proposition}[definition]{\textbf{Proposition}}
\newtheorem{corollary}[definition]{\textbf{Corollary}}

\newcommand{\bN}{\mathbb{N}}

\newcommand{\bR}{\mathbb{R}}

\newcommand{\bE}{\mathbb{E}} %
\newcommand{\bP}{\mathbb{P}} 

\newcommand{\cM}{\mathcal{M}} 

\newcommand{\rF}{\mathscr{F}} 
\newcommand{\rG}{\mathscr{G}} 

\title{Perturbed Brownian motion reflected at a time-dependent boundary}
\author{Chengshi Wang\footnote{SMS, Fudan University, \texttt{cswang17@fudan.edu.cn}}}
\date{}

\begin{document}

\maketitle

\begin{abstract}
Let $B$ be a standard Brownian motion, $x\ge 0,\, \nu<1$, and $b:[0,\infty)\to\bR$ is a continuous function locally of finite variation starting from 0.  We define the perturbed Brownian motion reflected at the boundary $b$ by establishing strong existence and pathwise uniqueness of a solution to the equation
\[
 W_t=(1-\nu)x+B_t+\nu M_t(W)+\frac12 L_t^0(W-b),
 \qquad W_t\ge b(t),
\]
where $M_t(W):=\sup_{0\le s\le t} W_s$ and the process $L^0(W-b)$ is the semimartingale local time at 0 of the process $W-b$.  We give a positive result under condition (PB) on the boundary $b$ : 
for every $T>0$, the upward increment $\sup_{0\le s<t\le T,\,t-s\le h}(b(t)-b(s))^+ = o(\sqrt{h})$ as $h\downarrow 0$.
The proof splits into two regimes: the case $\nu<1/2$ is a consequence of the Skorokhod problem in an orthant proved by Williams \cite{Williams1995}, while the case $\nu\ge 1/2$ combines a deterministic comparison estimate and a logarithmic upper bound on the number of completed round-trips, following the strategy of Chaumont and Doney \cite{ChaumontDoney}. 
For $\alpha\in(0,1/2)$, we also construct an increasing $\alpha$-H\"older boundary for which no continuous adapted solution starting from zero exists for any $\nu<1$.

\bigskip

\noindent \textbf{Keywords}: Perturbed Brownian motion, local time, Skorokhod problem.

\medskip

\noindent \textbf{Mathematics Subject Classification (2020)}: 60J65, 60H10.
\end{abstract}

\section{Introduction}

Let $(\Omega,\rF,(\rF_t)_{t\ge0},\bP)$ satisfy the usual conditions and let $B$ be an $\rF$-Brownian motion. Let $x\ge 0,\, \nu<1$ and $b:[0,\infty)\to \bR$ be a continuous function locally of finite variation starting from 0. We study the strong existence and pathwise uniqueness of a semimartingale solution to the following equation
\begin{equation}\label{eq:main}
 W_t=(1-\nu)x+B_t+\nu M_t(W)+\frac12 L_t^0(W-b),
 \qquad W_t-b(t)\ge0,\qquad t\ge 0,
\end{equation}
where $ M_t(w):=\sup_{0\le s\le t}w_s$ for a continuous function $w:[0,\infty)\to \bR$, and $L_t^a(X)$ is the local time of a continuous $\rF$-semimartingale $X=(X_t:t\ge 0)$ at time $t$ and level $a$, normalized by
\[
 L_t^{a}(X) =\lim_{\varepsilon\downarrow0}\frac1\varepsilon \int_0^t\ind_{[a,a+\varepsilon)}(X_s)\dd\langle X\rangle_s,\qquad t\ge 0.
\]
The solution $W$ is called a perturbed Brownian motion reflected at the time-dependent boundary $b$; it evolves as a Brownian motion away from its running maximum and the boundary $b$, while it is perturbed when attaining a new maximum and reflected at the boundary. 
When $\nu=0$ and $b\equiv0$, \eqref{eq:main} reduces to the standard equation for reflecting Brownian motion started at $x$. For $x=0$, Tanaka's formula and Skorokhod's lemma also yield L\'evy's identity; see \cite{RevuzYor}.

The case $b\equiv 0$ has attracted much attention because of a deep connection with the perturbed reflecting Brownian motion (also called $\mu$-processes), which has been the subject of various literature such as \cite{LeGall-Yor1986,Yor1992,CarmonaPetitYor1994,Werner1995,Perman1996,PermanWerner,AidekonHuShi2021,AidekonHuShi2024,AidekonHuShi2026}.
In the zero-initial-state case, Le Gall and Yor \cite{LeGall-Yor1990} established the uniqueness of an adapted solution of \eqref{eq:main} for $\nu<1/2$. The range $1/2\le \nu<1$ was subsequently treated by Davis \cite{Davis1999} and Chaumont and Doney \cite{ChaumontDoney}, using different methods.
This model is also related to Brownian motions perturbed at extrema, which have been studied in \cite{Davis1996,PermanWerner,Davis1999,ChaumontDoney,ChaumontDoneyCalculations,ChaumontDoneyHu}; these processes arise as scaling limits of certain self-interacting random walks, see for example \cite{DolgopyatKosygina2012,KosyginaMountfordPeterson2022,KosyginaMountfordPeterson2023,LiuWang}. Doney and Zhang \cite{DoneyZhang} studied the deterministic perturbed Skorokhod problem with reflection at 0 and established corresponding results for perturbed diffusion processes.
Bahaj and Hiderah \cite{BahajHiderah2024} proved the existence and uniqueness of a strong solution of a perturbed SDE with a local time term under suitable assumptions on the coefficients. 

For a general time-dependent boundary $b$, the reflection in
\eqref{eq:main} is related to Skorokhod problems in evolving domains. We refer to \cite{BurdzyKangRamanan} for existence and uniqueness for the extended Skorokhod problem
between two time-dependent boundaries and \cite{NystromOnskog} for Skorokhod problems with oblique reflection in time-dependent domains.
On the other hand, set $X=W-b,\, Y=M(W)-W$ and $K= L^0(W-b)/2,\, V=(1-\nu)(M(W)-x)$. Then \eqref{eq:main} gives
\begin{align}\label{eq:2D-Skorokhod}
 \begin{pmatrix}X_t\\Y_t\end{pmatrix}
 =\begin{pmatrix}x+B_t-b(t)\\-B_t\end{pmatrix}
 +R\begin{pmatrix}K_t\\V_t\end{pmatrix},
 \qquad \text{where}~
 R=\begin{pmatrix}1&\nu^*\\-1&1\end{pmatrix},\ \nu^*:=\frac{\nu}{1-\nu}>-1.
\end{align}
The measure $\dd K$ is a.s. carried by $\{X=0\}$ and $\dd V$ by $\{Y=0\}$. Equation \eqref{eq:2D-Skorokhod} is a specific example of the Skorokhod problem in an orthant: given a continuous function $f:[0,\infty)\to \bR^d$ with $f(0)\in\bR_+^d$ and a $d\times d$ real-valued matrix $R$, the problem is to find continuous functions $g,m:[0,\infty)\mapsto\bR_+^d$, such that $g=f+Rm$, $m$ is increasing and starts from 0 and each component $\dd m_i$ is carried by $\{g_i=0\}$ for each $i=1,\cdots,d$. Classical references for Skorokhod problems and reflected
Brownian motion in an orthant include \cite{HarrisonReiman,BernardElKharroubi,TaylorWilliams,Williams1995,DaiWilliams}.
Write $Q=I-R$ for $I$ a $d$-dimensional unit matrix, and let $|Q|$ be obtained by replacing each entry by its absolute value. The existence and uniqueness of this problem hold whenever $\rho(|Q|)$, the spectral radius of $|Q|$, is strictly smaller than 1, as shown in \cite{Williams1995}. More generally, the existence of a solution for every continuous driver $f$ is equivalent to the so-called completely-$\mathcal S$ condition on the matrix $R$, see \cite{BernardElKharroubi,DaiWilliams}; the uniqueness depends further on the driver $f$ under this condition.
When $R$ is completely-$\mathcal S$ with each diagonal entry equal to 1 and $f$ is a standard $d$-dimensional Brownian motion, Bass and Burdzy \cite{BassBurdzyCritical} proved pathwise uniqueness and the existence of an adapted solution in the critical case, i.e. $\rho(|Q|)=1$; their companion work \cite{BassBurdzyNonunique} exhibits non-uniqueness for almost every Brownian driving path in certain supercritical two-dimensional configurations, where $\rho(|Q|)>1$.

\bigskip

We now state the main result of this paper. 
To be brief, we extend the fixed-boundary model to deterministic time-dependent boundaries and identify a one-sided regularity condition ensuring strong existence and pathwise uniqueness of equation \eqref{eq:main}.
For $T<\infty$ and $h>0$, put
\begin{equation}\label{eq:positive-modulus}
 \omega^+_{b,T}(h):=
 \sup_{\substack{0\le s<t\le T\\t-s\le h}}(b(t)-b(s))^+.
\end{equation}

\begin{theorem}\label{thm:main}
Let $x\ge0,\, \nu<1$ and $b:[0,\infty)\to\bR$ be continuous and locally of finite variation, with $b(0)=0$, and suppose that for every $T>0$,
\begin{equation}\label{eq:PB}
 \tag{PB}
 \frac{\omega^+_{b,T}(h)}{\sqrt h}\to 0 \qquad \text{as}~ h\downarrow 0.
\end{equation}
Then \eqref{eq:main} has a pathwise unique strong solution.  
\end{theorem}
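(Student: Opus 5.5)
We follow the two-regime strategy from the abstract: first a deterministic pathwise reformulation, then separate arguments for $\nu<1/2$ and $\nu\ge 1/2$.

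\textbf{Reformulation.} By Tanaka's formula, $X=W-b\ge0$ is a continuous semimartingale whose local time at $0$ is carried by $\{X=0\}$. This is where we use that $b$ has finite variation: it makes $W-b$ a semimartingale, so $L^0(W-b)$ is meaningful. Equation \eqref{eq:main} is therefore equivalent to the two-dimensional Skorokhod problem \eqref{eq:2D-Skorokhod}, with driver
\[
f=(x+B-b,\,-B)
\]
and reflection matrix $R$. It then suffices to prove existence and uniqueness of this deterministic Skorokhod problem for $\bP$-a.e. path $f$. Adaptedness of the resulting solution map gives a strong solution. Pathwise uniqueness follows because any semimartingale solution produces a solution of the Skorokhod problem, with $K=L^0/2$ and $V=(1-\nu)(M(W)-x)$.

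\textbf{Case $\nu<1/2$.} Here $Q=I-R$ has off-diagonal entries $-\nu^*$ and $1$, so $\rho(|Q|)=\sqrt{|\nu^*|}$. This is $<1$ iff $|\nu|<1-\nu$, i.e. iff $\nu<1/2$ (using also $\nu^*>-1$). Williams' theorem \cite{Williams1995} then gives existence and uniqueness for every continuous driver $f$, with a solution map that is Lipschitz in the uniform norm on $[0,t]$. This makes $(X,Y,K,V)$ adapted, and (PB) is not even needed in this case.

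\textbf{Case $1/2\le\nu<1$, following \cite{ChaumontDoney}.} We build the solution by successive excursions. When $W$ is not at its maximum, the equation is a one-dimensional Skorokhod reflection of $x+B-b$ above $0$ with the current $M$ frozen, which is uniquely solvable. When $W$ sits at its maximum, $Y=0$ and the dynamics is the one-sided perturbed equation, solvable on stretches where $X>0$. Trouble can only arise through infinitely many alternations between $\{X=0\}$ and $\{Y=0\}$ (``round-trips'') in finite time. So we will show:
\begin{enumerate}[(i)]
\item a deterministic comparison estimate: two solutions with the same driver have a difference in $(K,V)$ that is multiplied by at most $\nu^*$ per round-trip. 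Equivalently, the iteration map on $V$ over one round-trip is a contraction of ratio $\nu^*$, up to error terms controlled by $\omega^+_{b,T}$;
\item a logarithmic bound: a.s., on $[0,T]$ the number of completed round-trips at spatial scale $\ge\varepsilon$ is $O(\log(1/\varepsilon))$.
\end{enumerate}
For (ii), a round-trip of height $\varepsilon$ forces $W$ to travel a distance of about $M-b$ between the boundary and the maximum. Condition (PB) guarantees that over the corresponding time scale $h\asymp\varepsilon^2$ the boundary moves by only $o(\varepsilon)$. Hence each trip requires a Brownian oscillation of order $\varepsilon$ up to a negligible correction, and Brownian scaling together with Borel--Cantelli yields the logarithmic count.

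Combining (i) and (ii), the discrepancy between two solutions is bounded by something like $C\varepsilon\,(\nu^*)^{N(\varepsilon)}$ plus error terms. Letting $\varepsilon\to0$, we choose exponents so that the polynomial decay of $\varepsilon$ beats the polynomial growth $(\nu^*)^{c\log(1/\varepsilon)}=\varepsilon^{-c\log\nu^*}$; this gives uniqueness. Existence then follows by passing to the limit of the excursion-by-excursion construction, and adaptedness holds because each stage is a stopping-time construction.

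\textbf{Main obstacle.} The hard step is (ii) together with matching the constants. Since $\nu^*\ge1$ when $\nu\ge1/2$, the bound in (i) contracts only if we use the correct quantity, the time-reversed or ratio comparison of \cite{ChaumontDoney}, rather than a naive Lipschitz bound. The exponent from the round-trip count must also be small enough, and this is exactly where the $o(\sqrt h)$ in (PB) will be spent. The abstract's counterexample for $\alpha$-Hölder boundaries with $\alpha<1/2$ shows that this condition cannot be weakened substantially.
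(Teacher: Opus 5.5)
Your reformulation step is false, and it is the central gap. A solution of \eqref{eq:main} does give a solution of \eqref{eq:2D-Skorokhod} with $K=\frac12L^0(W-b)$. So for $\nu<1/2$, Williams' uniqueness does yield pathwise uniqueness. The converse fails, however. If $(X,Y,K,V)$ solves \eqref{eq:2D-Skorokhod} and $W=X+b$, Tanaka's formula for $X\ge0$ only gives
\[
\tfrac12L^0_t(X)=\int_0^t\ind_{\{X_s=0\}}\dd X_s=K_t+\nu\int_0^t\ind_{\{X_s=0\}}\dd M_s(W)-\int_0^t\ind_{\{X_s=0\}}\dd b(s).
\]
Hence the regulator equals half the local time only if $\int\ind_{\{X=0\}}\dd b=\nu\int\ind_{\{X=0\}}\dd M(W)$.

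Your claim that \eqref{eq:PB} is not needed for $\nu<1/2$ is refuted by Theorem~\ref{thm:nonexistence}. For the Cantor-type boundary, Williams' solution exists for every $\nu<1/2$, yet \eqref{eq:main} has no solution. The reason is that any solution of the regulator equation gives positive $\dd b$-mass to $\{W=b\}$ (Proposition~\ref{prop:contacts}), whereas a true solution cannot (Lemma~\ref{lem:necessary-nocharge}).

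The paper bridges this in Proposition~\ref{prop:positive-gap}. For fixed $t$, on $\{X_t=0\}$ one has $B_t-B_{t-h}\le\max\{1,\bar\nu\}(b(t)-b(t-h))^+$ for small $h$, which the law of the iterated logarithm and \eqref{eq:PB} rule out. Fubini then gives $\int\ind_{\{X=0\}}|\dd b|=0$, so $\dd M(W)$ does not charge $\{X=0\}$ either, and $K=\frac12L^0(W-b)$. You need this identification in every regime, including for the limit object when $\nu\ge1/2$.

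For $\nu\ge1/2$ there are three further gaps.

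\emph{Corner polarity.} The alternating construction from $x>0$ is global only if the corner $F=M(W)-b=0$ is never hit. This is the gambler's-ruin estimate \eqref{eq:corner-probability} of Proposition~\ref{prop:corner-power}, which uses \eqref{eq:PB}, and you do not address it.

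\emph{The logarithmic count.} Your mechanism for (ii) does not give a logarithm: counting Brownian oscillations of size at least $\varepsilon$ on $[0,T]$ gives order $T\varepsilon^{-2}$. The logarithm comes from multiplicative growth of the gap:
\begin{itemize}
\item a cycle started on the maximum face with gap $f$ typically lasts time $O(f^2)$;
\item \eqref{eq:PB} freezes the boundary to within $o(f)$ over that time;
\item the gap afterwards dominates $(1-o(1))fR$ with $\bP(R>r)=r^{-\bar\nu}$ (Perman--Werner), so $\log F$ gains about $1/\bar\nu$ per round-trip;
\item downward moves are controlled by \eqref{eq:corner-probability}, and once $F$ reaches a fixed level only a tight number of further round-trips occur.
\end{itemize}
This yields $N\le A\log(1/r)$ for every $A>\bar\nu$, and the sharp constant is indispensable. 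The bound \eqref{eq:comparison-bound} is an amplification by $(\nu^*)^2$ per round-trip, not a contraction. Closing the argument therefore requires $2A\log\nu^*<1$, which is possible only because $2\bar\nu\log\nu^*=2\log\nu^*/(1+\nu^*)<1$. An unspecified $O(\log(1/\varepsilon))$ does not suffice.

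\emph{Starting from zero.} From $x=0$ the process starts at the corner, so ``passing to the limit'' needs a concrete scheme. The paper replaces $B$ on $[0,u_n]$, where $u_n$ is the hitting time of $r_n$. Then $W^{(n)}$ restarts at $u_n$ with gap in $[r_n/(2\bar\nu),\overline f_n]$, and any other solution (or $W^{(m)}$) differs from it there by at most $2n^2r_n/\bar\nu$. Borel--Cantelli makes $(W^{(n)})$ Cauchy, and $\bigcap_n\rF_{u_n\vee t}=\rF_t$ gives adaptedness.
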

 
Condition \eqref{eq:PB} controls only upward increments: $\omega^+_{b,T}(h)=o(\sqrt h)$ as $h\downarrow 0$. It imposes no additional restriction on downward increments. Within the class of continuous boundaries of locally finite variation, it holds whenever $b$ is locally H\"older continuous of order $\alpha>1/2$, and it also holds for every decreasing boundary.
An adapted solution of \eqref{eq:main} starting from a positive state $x>0$ can be constructed uniquely following the method given by \cite{LeGall-Yor1990,ChaumontDoney} until time $\sigma_*:=\inf\{t>0:M_t(W)=b(t)\}$; Proposition~\ref{prop:corner-power} then shows that $\sigma_*=\infty$ under \eqref{eq:PB}, so the solution exists for all $t\ge 0$. 
While the case $\nu<1/2$ can be dealt with using \eqref{eq:2D-Skorokhod}, the strategy of the proof of Theorem~\ref{thm:main} in the case $x=0,\nu\in[1/2,1)$ follows \cite{ChaumontDoney}: we construct a Cauchy sequence of solutions with drivers that coincide with $B$ after an $\rF$-stopping time tending to 0, and show that the local uniform limit solves \eqref{eq:main}.
The assumption on locally finite variation of $b$ together with \eqref{eq:PB} then identifies the regulator $K$ with half of the semimartingale local time of $W-b$.

The next result shows that local finite variation and H\"older continuity of order below $1/2$ do not suffice for existence of a solution of \eqref{eq:main}.

\begin{theorem}\label{thm:nonexistence}
For every $\alpha\in(0,1/2)$, there is a deterministic continuous
increasing function $b:[0,\infty)\to[0,1]$, with $b(0)=0$ and
total variation $1$, such that:
\begin{enumerate}[(i)]
\item $b$ is globally $\alpha$-H\"older continuous and its Stieltjes
measure is singular with respect to Lebesgue measure;
\item for every $\nu<1$ and $T>0$, \eqref{eq:main} with
$x=0$ has no continuous adapted solution on $[0,T]$.
\end{enumerate}
\end{theorem}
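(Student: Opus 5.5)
The plan is to take a solution, apply Tanaka's formula to it, and reduce existence to an identity between two Stieltjes measures. The boundary $b$ will then be built so that this identity fails almost surely.

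\textbf{Step 1: the necessary identity.} Suppose $W$ is a continuous adapted solution of \eqref{eq:main} on $[0,T]$ with $x=0$. Then $W$ is a continuous semimartingale. Since $b$ has finite variation, so is $X:=W-b\ge0$. Tanaka's formula for $X=X^+$ gives
\[
\tfrac12 L^0_t(X)=\int_0^t\ind_{\{X_s=0\}}\,\dd X_s .
\]
We insert the equation, $\dd X=\dd B+\nu\,\dd M+\tfrac12\dd L^0(X)-\dd b$. The martingale part vanishes, because $\int\ind_{\{X=0\}}\dd\langle B\rangle=\int \ind_{\{X=0\}}\dd\langle X\rangle=0$ by the occupation formula. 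The local-time terms also cancel, since $\dd L^0(X)$ is carried by $\{X=0\}$. We are left with the pathwise identity
\[
\int_0^t \ind_{\{W_s=b(s)\}}\,\dd b(s)=\nu\int_0^t\ind_{\{W_s=b(s)\}}\,\dd M_s(W),\qquad t\le T. \tag{$\ast$}
\]
Now split the contact set into $\{W=b<M\}$ and $\{W=b=M\}$. The measure $\dd M$ does not charge $\{W<M\}$. So it suffices to show that, almost surely, $\dd b$ gives positive mass to $\{s\le T: W_s=b(s)<M_s\}$, while the contribution of $\{W=b=M\}$ cannot compensate. I would handle $\{W=b=M\}$ by showing it is $\dd b$-null and $\dd M$-null near the relevant times. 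On that set $b$ equals the running maximum of $W$, and I expect it to be Lebesgue-null and essentially avoided by the construction. If this is awkward, an alternative is to compare both sides of $(\ast)$ on small intervals directly. Note that $(\ast)$ with $\nu=0$ says precisely that $W$ never leans on $b$ along the support of $\dd b$.

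\textbf{Step 2: construction of $b$.} The aim is to build a Cantor-type singular increasing function that is globally $\alpha$-H\"older with total variation $1$. The rises should be steep at all small scales, and should accumulate at $0$ and, more usefully, inside every interval of $\operatorname{supp}\dd b$. Concretely, I would use a generalized Cantor set in $[0,1]$ in which each generation-$n$ interval of length $\ell_n$ carries $b$-mass $m_n\asymp \ell_n^{\alpha}$. With $\alpha<1/2$ this gives $m_n\gg\sqrt{\ell_n}$. The number of children and their ratios are chosen so that the H\"older bound $|b(t)-b(s)|\le C|t-s|^\alpha$ holds across generations. This is routine bookkeeping with the ratio $\ell_{n+1}/\ell_n$ and $m_{n+1}/m_n$, and it gives (i).

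\textbf{Step 3: forcing contact (the main obstacle).} We must show that any solution is pushed by $b$ on a set of positive $\dd b$-measure. Over a generation-$n$ interval $I$ where $b$ rises by $m_n\gg\sqrt{\ell_n}$, the driver $B$ and, between new maxima, the process $W$ fluctuate by only $O(\sqrt{\ell_n\log(1/\ell_n)})$. Because $W\ge b$, the process $W$ must therefore be dragged up, and it must sit on $b$ for most of the $\dd b$-mass of $I$. I plan to make this quantitative using the Skorokhod-type comparison from the deterministic estimates. Between two contact points, $W-b$ grows only through $B$ and $\nu M$, so a lower bound on $\int_I\ind_{\{W=b\}}\dd b$ of order $m_n$ follows from the modulus of continuity of $B$. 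Summing over the intervals gives, almost surely, $\dd b(\{W=b\})\ge c>0$, using L\'evy's modulus uniformly in $n$ and a Borel--Cantelli argument.

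The main difficulty is the interaction with $M$. Pushing $W$ up also raises $M$, possibly making $W=b=M$, so that $\nu\,\dd M$ in $(\ast)$ could absorb the mass. To rule this out, I would first argue, via the H\"older bound and Brownian oscillation, that $M-b$ stays strictly positive on most generation-$n$ intervals after the first few. For $\nu\le0$ this comes from the fact that $M$ already exceeds $b$ by the earlier excursions. If $(\ast)$ then held on such intervals, the left side would be positive and the right side zero, a contradiction that gives (ii) for every $\nu<1$.
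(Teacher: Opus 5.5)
\textbf{Gap: neutralizing the $\dd M$ term.} Your identity $(\ast)$ is exactly the paper's relation $A_t=\nu J_t$ (its Lemma 5.1). A self-similar Cantor function combined with excursion-by-excursion estimates is also the paper's route. The genuine gap is how you neutralize $\nu\int\ind_{\{W=b\}}\dd M$.

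You plan to show, from H\"older bounds and Brownian oscillation alone, that $M-b>0$ on most of the $\dd b$-mass, or that $\{W=b=M\}$ is $\dd b$-null. Those tools only see the reflected equation, and for that equation the opposite holds near $0$. Take the paper's construction: $r=2^{-1/\alpha}$, $\mu=\dd b$, $\gamma\in(\alpha,1/2)$, and $H$ the $\gamma$-H\"older constant of $B$. If $M_t>b(t)$, then some excursion interval $(u,v)$ of $W$ above $b$ with $u<t$ overshoots, i.e. $b(t)-b(u)<\max\{1,\bar\nu^{-1}\}H(v-u)^\gamma$. Summing these overshoots over all excursions gives $\mu(\{t\le r^n:M_t>b(t)\})=O(Hr^{n\gamma})=o(r^{n\alpha})$. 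For this, use the packing estimate below, and group the excursions lying in a common gap of the Cantor set. So near $0$ the contact mass sits almost entirely on $\{W=b=M\}$. Lebesgue-nullity of that set is irrelevant, since $\dd b$ is singular.

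The missing idea is one line. $D=M-b$ is continuous, nonnegative and of finite variation, so $\ind_{\{D=0\}}\dd D=0$, i.e. $\ind_{\{M=b\}}\dd M=\ind_{\{M=b\}}\dd b$. Since $\dd M$ is carried by $\{W=M\}$ and $M\ge W\ge b$ gives $\{M=b\}\subseteq\{W=b\}$,
\[
 J_t:=\int_0^t\ind_{\{W_s=b(s)\}}\,\dd M_s=\int_0^t\ind_{\{M_s=b(s)\}}\,\dd b(s)\le\int_0^t\ind_{\{W_s=b(s)\}}\,\dd b(s)=:A_t .
\]
Then $(\ast)$ gives $A_t=\nu J_t\le\nu^+A_t$, which forces $A_t=0$ for every $\nu<1$. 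For $\nu\le0$ this already follows from $(\ast)$ alone, whose right side is $\le0$. Hence the contact set of any solution is $\dd b$-null, and you never need to know where $M$ sits.

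\textbf{Secondary gap: summing over excursions.} What remains is a lower bound on the \emph{total} contact mass, and here your Step~3 lacks the summation device. On a component $I=(u,v)$ of $\{W>b\}\cap[0,r^n]$ you have
\[
\mu(I)=b(v)-b(u)\le W_v-W_u\le\max\{1,\bar\nu^{-1}\}H|I|^\gamma .
\]
This must be summed over all components charged by $\mu$, and these occur at every scale. The paper does this with a packing lemma: disjoint subintervals of $[0,r^n]$ with positive $\mu$-measure satisfy $\sum_j|I_j|^\gamma\le 4(1-2r^\gamma)^{-1}r^{n\gamma}$. This holds because $\mathcal C_r\cap[0,r^n]$ is covered by $2^{k+1}$ intervals of length $r^{n+k+1}$ and $2r^\gamma<1$. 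Combined with $\mu([0,r^n])=r^{n\alpha}$, it gives contact mass at least $r^{n\alpha}-CHr^{n\gamma}>0$ for large $n$, pathwise once $H<\infty$. No L\'evy modulus or Borel--Cantelli argument is needed. It is the localization to $[0,r^n]$, not a global bound $\ge c>0$ on $[0,T]$, that beats the random constant $H$.
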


Note that when $\nu<1/2$, the spectral radius of $|Q|$ is $\sqrt{|\nu^*|}<1$, so the Skorokhod problem in an orthant has a unique solution. We point out that there is no contradiction with Theorem~\ref{thm:nonexistence}: the function $K$ satisfying \eqref{eq:2D-Skorokhod} need \emph{not} be half of the local time of $W-b$. As shown in Section~\ref{sec:example}, with the boundary $b$ increasing, for solution $W$ of \eqref{eq:main}, the contact set with the boundary $\{t\ge 0:W_t=b(t)\}$ has null $\dd b$-measure; the solution of \eqref{eq:2D-Skorokhod} with our construction on $b$ does not satisfy this necessary condition.

\bigskip

The organization of the paper is as follows. In Section~\ref{sec:positive}, we construct solutions starting from positive initial states and prove the strict separation of the running maximum from the boundary under \eqref{eq:PB}. Section~\ref{sec:comparison} establishes a deterministic comparison estimate in terms of the number of round-trips, while Section~\ref{sec:cycles} proves a logarithmic upper bound on this number for solutions started near the boundary. We combine these estimates to prove Theorem~\ref{thm:main} in Section~\ref{sec:proof} and construct the counterexample of Theorem~\ref{thm:nonexistence} in Section~\ref{sec:example}.

\paragraph{Acknowledgment}
We thank Elie A\"id\'ekon, Lo\"ic Chaumont and Dongjian Qian for helpful discussions.


\section{Solution with a positive initial state}\label{sec:positive}

Let $e,b:[0,\infty)\to\bR$ be continuous with $e(0)=b(0)=0$. For $x\ge 0$ and $\nu<1$, consider $(w,k^w)$ satisfying
\begin{equation}\label{eq:deterministic-regulator}
\begin{aligned}
 w_t &=(1-\nu) x+e_t+\nu M_t(w)+k_t^w,\qquad  w\ge b \qquad t\ge 0,
\\
 k_0^w &=0,\quad k^w\text{ continuous and increasing},
 \qquad \int\ind_{\{w_t>b(t)\}}\dd k_t^w=0.
\end{aligned}
\end{equation}
This is a deterministic version of \eqref{eq:main}. Set $x^+ = \max\{x,0\}$, $x^-=\max\{-x,0\}$ and
\begin{equation}\label{eq:nu}
 \bar{\nu}:=1-\nu>0,
 \qquad \nu^*:=\frac{\nu}{\bar{\nu}}\in(-1,\infty).
\end{equation}
Let $\sigma_*:=\inf\{t>0: M_t(w)=b(t)\}$. For $x>0$, we refer to \cite[Proposition 6.2]{LeGall-Yor1990} and construct the solution of \eqref{eq:deterministic-regulator} up to time $\sigma_*$ uniquely as follows.
Let $\sigma^{(1)}:=\inf\{t\ge 0: w_t=b(t)\}$. Since $b(0)=0$, we have $\sigma^{(1)}>0$ and therefore \eqref{eq:deterministic-regulator} has the unique solution 
\begin{equation}\label{eq:max-phase}
w_t = x + e_t + \nu^*\sup_{0\le s\le t} e_s, \qquad t\in[0,\sigma^{(1)}].
\end{equation}
Let $\sigma^{(2)}=\inf\{t\ge \sigma^{(1)}: w_t = M_{\sigma^{(1)}}(w)\}$. Then if $\sigma^{(1)}\ne \sigma_*$, we have $\sigma^{(2)}>\sigma^{(1)}$ and therefore can apply Skorokhod's lemma \cite[Lemma VI.2.1]{RevuzYor} to show that 
\begin{equation}\label{eq:boundary-phase}
\begin{aligned}
k^w_t &= \sup_{{\sigma^{(1)}}\le s\le t}
   [b(s)-b({\sigma^{(1)}})-(e_s-e_{\sigma^{(1)}})]^+,\\
w_t &= b({\sigma^{(1)}})+(e_t-e_{\sigma^{(1)}}) + k^w_t,\qquad t\in[\sigma^{(1)},\sigma^{(2)}]
\end{aligned}
\end{equation}
is the unique solution of \eqref{eq:deterministic-regulator} on the interval $[\sigma^{(1)},\sigma^{(2)}]$. 
We continue by alternating these two phases \eqref{eq:max-phase}--\eqref{eq:boundary-phase} whenever $\sigma^{(n)}<\sigma_*$.
If we had $\sigma^{(n)}<\sigma_*$ for all $n\ge 1$ while $\lim_{n\to \infty} \sigma^{(n)} <\infty$, we would obtain that this limit is  $\sigma_*$ since $w_{\sigma^{(2n-1)}}=b(\sigma^{(2n-1)})$ and $w_{\sigma^{(2n)}}=M_{\sigma^{(2n)}}(w)=M_{\sigma^{(2n-1)}}(w)$ from the construction. 
Therefore this solution is unique up to time $\sigma_*$, and it is moreover nonanticipative: its restriction to $[0,t]$ depends only on the driver $e$ and boundary $b$ on $[0,t]$.

At this stage, the solution has only been constructed uniquely up to  \(\sigma_*\). We do not yet know whether it can be continued beyond \(\sigma_*\); even if such a continuation exists, it remains unclear whether it is unique or whether its lifetime is finite.
Nevertheless, we have the following simple identities, which indicates that a solution of \eqref{eq:deterministic-regulator} surviving at its lifetime time can be continuously extended to this time.

\begin{lemma}\label{lem:identities}
Suppose there exists $(w,k^w)$ solving \eqref{eq:deterministic-regulator} with lifetime $\zeta$. For all $t\in [0,\zeta)$,
\begin{align}
 & \bar{\nu}\bigl(M_t(w)-x\bigr) =\sup_{0\le s\le t}\{e_s+k^w_s\}, \label{eq:max-identity}\\
 & k^w_t=\sup_{0\le s\le t} \bigl[b(s)-\bar\nu x-e_s-\nu M_s(w)\bigr]^+.
 \label{eq:K-identity}
\end{align}
In particular,
\begin{equation}\label{eq:K-rough}
 0\le k^w_t\le \left[\sup_{0\le s\le t}b(s)-\bar\nu x-\inf_{0\le s\le t}e_s\right]^+ + \nu^- M_t(w).
\end{equation}
If $\zeta<\infty$, the solution $(w,k^w)$ can be extended to time $\zeta$.
\end{lemma}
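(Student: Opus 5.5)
The plan is to prove \eqref{eq:max-identity} first, deduce \eqref{eq:K-identity} from it by Skorokhod's lemma, and then read off \eqref{eq:K-rough} and the extension to $\zeta$.

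For \eqref{eq:max-identity}, set $u_t:=\bar\nu x+e_t+k^w_t$, so that $w_t=u_t+\nu M_t(w)$. The key observation is that the running maximum only increases at times $s$ with $w_s=M_s(w)$. At such times $M_s(w)=u_s+\nu M_s(w)$, that is, $\bar\nu M_s(w)=u_s$. At every time we have $w_s\le M_s(w)$, hence $u_s\le\bar\nu M_s(w)\le\bar\nu M_t(w)$ for $s\le t$. This gives $\sup_{s\le t}u_s\le \bar\nu M_t(w)$.

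For the reverse inequality, let $\tau:=\sup\{s\le t: w_s=M_t(w)\}$, which is attained by continuity. If $\tau>0$ or $w_0=M_t(w)$, then $\bar\nu M_t(w)=u_\tau$. At time $0$ we have $w_0=\bar\nu x+\nu w_0$, so $w_0=x$ and $u_0=\bar\nu x$; hence in all cases $\bar\nu M_t(w)=\max_{s\le t}u_s$. Subtracting $\bar\nu x$ yields \eqref{eq:max-identity}, since $e_0+k_0^w=0$. The same computation shows $M_t(w)\ge x$ and $w_0=x$, which is needed later.

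For \eqref{eq:K-identity}, write $w_t-b(t)=z_t+k^w_t$ with $z_t:=\bar\nu x+e_t+\nu M_t(w)-b(t)$. Here $z$ is continuous, $z_0=x\ge0$, $w-b\ge0$, and $k^w$ is continuous, increasing, starts at $0$, and increases only on $\{w=b\}$. These are exactly the hypotheses of Skorokhod's lemma \cite[Lemma VI.2.1]{RevuzYor}, applied with $M(w)$ treated as a fixed continuous function even though it depends on $w$. Uniqueness in that lemma then gives $k^w_t=\sup_{s\le t}(z_s)^-$, which is \eqref{eq:K-identity}.

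The bound \eqref{eq:K-rough} follows by splitting $\nu M_s(w)=\nu^+M_s-\nu^-M_s$. Since $M_s(w)\ge x\ge0$, the term $-\nu^+M_s$ is $\le0$ and may be dropped, and the term $\nu^-M_s\le\nu^-M_t$. Using $(a+c)^+\le a^++c$ for $c\ge0$ then gives the stated inequality.

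For the extension, suppose $\zeta<\infty$. The first task is to bound $M$ and $k^w$ on $[0,\zeta)$; this is the main obstacle, because \eqref{eq:K-rough} involves $M$ while \eqref{eq:max-identity} involves $k^w$.
\begin{itemize}
\item Combining the two gives $\bar\nu (M_t-x)\le \sup_{s\le t}e_s + C_t+\nu^- M_t$, where $C_t$ collects the $b$ and $e$ terms.
\item Since $\bar\nu-\nu^-=1-\nu^+-\nu^-+\nu^- \cdot 0$, we have $\bar\nu-\nu^-=1-|\nu|$ when $\nu<0$ and $\bar\nu-\nu^-=1-\nu$ when $\nu\ge0$. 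Both are positive for $\nu<1$, so $M$ is bounded on $[0,\zeta)$, and hence so is $k^w$.
\end{itemize}
Next, $k^w$ and $M(w)$ are monotone and bounded, so they have finite limits at $\zeta$. Since $w=\bar\nu x+e+\nu M+k^w$ and $e$ is continuous, $w$ also has a limit at $\zeta$. Define $w_\zeta$, $k^w_\zeta$ by these limits. Continuity, $w_\zeta\ge b(\zeta)$, and the support condition for $dk^w$ (no mass is added at the single point $\zeta$) are preserved, so the extended pair solves \eqref{eq:deterministic-regulator} on $[0,\zeta]$.
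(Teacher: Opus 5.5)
Your proof follows the paper's argument step by step. You prove \eqref{eq:max-identity} by comparing at a time where $w$ attains $M_t(w)$, and \eqref{eq:K-identity} by Skorokhod's lemma applied to $w-b$ with $M(w)$ held fixed. You then combine the two bounds to control $M(w)$ and $k^w$ before taking monotone limits at $\zeta$. Your explicit check that $w_0=x$, so that $M_s(w)\ge x\ge0$, is a detail the paper leaves implicit; it is what justifies dropping $-\nu^+M_s(w)$ in \eqref{eq:K-rough}. Treating both signs of $\nu$ at once in the extension step is a harmless variant of the paper's case split.

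There is, however, an arithmetic error in the extension step, and as written it breaks the argument for $\nu\le-1$. Since $\bar\nu=1-\nu=1-\nu^++\nu^-$, you get $\bar\nu-\nu^-=1-\nu^+$, not $1-\nu^+-\nu^-=1-|\nu|$. In particular, for $\nu<0$ the coefficient is exactly $1$. With your formula, the claim ``both are positive for $\nu<1$'' is false whenever $\nu\le-1$; for example, $\nu=-2$ gives $1-|\nu|=-1$. The lemma is asserted for every $\nu<1$, so this case matters.

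With the correct value, your combined inequality reads
\[
 (1-\nu^+)M_t(w)\le\bar\nu x+\sup_{s\le t}e_s+\Bigl[\sup_{s\le t}b(s)-\bar\nu x-\inf_{s\le t}e_s\Bigr]^+ ,
\]
and $1-\nu^+>0$ because $\nu<1$. For $\nu<0$ this is exactly the bound the paper writes. After this correction, the rest of your extension argument goes through.
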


\begin{proof}
We have \( e_t+k^w_t\le \bar{\nu}(M_t(w)-x)\) because $w\le M(w)$ and \eqref{eq:deterministic-regulator}. The continuity of $w$ gives a time $s_t\le t$ at which $w_{s_t}=M_{s_t}(w)=M_t(w)$. Substituting it into
\eqref{eq:deterministic-regulator} yields $e_{s_t}+k^w_{s_t}=\bar{\nu}(M_t(w)-x)$, which together with the preceding upper bound proves \eqref{eq:max-identity}.
Subtracting $b(t)$ from both sides of \eqref{eq:deterministic-regulator}, we obtain \eqref{eq:K-identity} by Skorokhod's lemma \cite[Lemma VI.2.1]{RevuzYor}. Therefore,
\[
 k^w_t\le\sup_{s\le t}[b(s)-\bar\nu x-e_s]^+ + \nu^- M_t(w) 
 \le\left[\sup_{s\le t}b(s)-\bar\nu x-\inf_{s\le t}e_s\right]^+ + \nu^- M_t(w),
\]
which proves \eqref{eq:K-rough}. Suppose now $\zeta<\infty$. If $\nu \ge 0$, \eqref{eq:K-rough} gives an upper bound of $k^w$ on $[0,\zeta)$, and thus $M(w)$ is bounded from above from \eqref{eq:max-identity}. If $\nu<0$, we deduce from \eqref{eq:max-identity}--\eqref{eq:K-rough} that
\[
M_t(w) \le \bar\nu x + \sup_{s\le t}e(s) + \left[\sup_{s\le t}b(s)-\bar\nu x-\inf_{s\le t}e_s\right]^+, \qquad t\in[0,\zeta).
\]
Therefore $M(w)$ is bounded from above, and so is $k^w$ by again applying \eqref{eq:K-rough}. Since $M(w)$ and $k^w$ are continuous and increasing, these two process can be continuously extended to time $\zeta$, which, from \eqref{eq:deterministic-regulator}, suggests an extension for $w$ and completes the proof of the lemma.
\end{proof}

Let $(W,K)$ be a continuous $\rF$-adapted solution of \eqref{eq:deterministic-regulator} with $e=B$ there and initial position $x\ge 0$ (if it exists) and lifetime $\zeta$, so that
\begin{equation}\label{eq:regulator}
\begin{aligned}
W_t&=(1-\nu) x+B_t+\nu M_t(W)+K_t,\qquad W_t\ge b(t),\qquad 0\le t< \zeta.
\end{aligned}
\end{equation}
For every $0\le t<\zeta$, define
\begin{equation}\label{eq:F-Y}
 F_t:=M_t(W)-b(t),
 \qquad Y_t:=M_t(W)-W_t,
\end{equation}
so $0\le Y_t\le F_t$. We observe that $\dd M$ is carried by $\{t\in[0,\zeta):Y_t=0\}$ and $\dd K$ by $\{t\in [0,\zeta):Y_t=F_t\}$. We call these two sets the \emph{maximum face} and the \emph{boundary face} respectively. 
For an $\rF$-stopping time $\sigma$, on the event $\{\sigma<\zeta\}$, define
\begin{align}\label{eq:hitting-F}
    \tau_a:=\inf\{t\in(\sigma,\zeta): F_t =a\},
\end{align}
with $\inf\emptyset = \infty$ by convention. The following result establishes a gambler-ruin estimate for the hitting time of $F$, which shows that, given a continuous boundary $b$ locally of finite variation that satisfies \eqref{eq:PB}, the corner $F=0$ is polar once the process $W$ starts at positive initial state.

\begin{proposition}\label{prop:corner-power}
Fix $T>0$ and let $\sigma\le T$ be an $\rF$-stopping time. Suppose that $b$ satisfies \eqref{eq:PB}. 
Define $\tau_a$ as in \eqref{eq:hitting-F}. There exists $\rho_T>0$ such that for every $\varepsilon>0$, on the event $\{\sigma<\zeta,\,\varepsilon<F_\sigma<\rho_T\}$, 
\begin{equation}\label{eq:corner-probability}
 \bP(\tau_\varepsilon<\tau_{\rho_T},\ 
      \tau_\varepsilon\le T\mid\rF_\sigma)
 \le \left(\frac{4\varepsilon}{F_\sigma}\right)^2.
\end{equation}
\end{proposition}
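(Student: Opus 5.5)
The plan is to reduce \eqref{eq:corner-probability} to a geometric gambler's-ruin estimate on dyadic levels of $F$. The one-step ingredient is this: whenever $F$ sits at a level $a\le\rho_T$, with conditional probability at least $1-p$ it reaches $2a$ before $a/2$. Here $p$ is a small number fixed in advance, and $\rho_T$ is then chosen using \eqref{eq:PB}.

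\emph{One-step estimate.} Let $\sigma'\le T$ be a stopping time with $\sigma'<\zeta$ and $F_{\sigma'}=a$, where $a\le\rho_T$. I would use the identity \eqref{eq:max-identity}, with $e=B$ and $k^w=K$. By \eqref{eq:regulator}, $B_t+K_t=W_t-\bar\nu x-\nu M_t(W)$, which gives $\bar\nu(M_{\sigma'}(W)-x)-(B_{\sigma'}+K_{\sigma'})=Y_{\sigma'}\le F_{\sigma'}=a$. Since $K$ is increasing, \eqref{eq:max-identity} then yields, for $t\ge\sigma'$,
\[
 \bar\nu\bigl(M_t(W)-M_{\sigma'}(W)\bigr)\ \ge\ \sup_{\sigma'\le s\le t}(B_s-B_{\sigma'})-a .
\]
Fix $c:=(3\bar\nu+1)$ and choose a constant $L=L(\nu,p)$ such that a standard Brownian motion reaches level $c$ before time $L$ with probability at least $1-p$. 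By Brownian scaling and the strong Markov property at $\sigma'$, the event
\[
E:=\Bigl\{\sup_{s\le La^2}(B_{\sigma'+s}-B_{\sigma'})\ge ca\Bigr\}
\]
has $\bP(E\mid\rF_{\sigma'})\ge1-p$. On $E$, $M$ increases by at least $3a$ within time $La^2$.

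Now I use \eqref{eq:PB} with horizon $T+1$. I choose $\rho_T$ so small that $L\rho_T^2\le1$ and $\omega^+_{b,T+1}(La^2)\le a/2$ for all $a\le\rho_T$; this is possible because $\omega^+(h)=o(\sqrt h)$. Over the window $[\sigma',\sigma'+La^2]$, $M$ is nondecreasing and $b$ rises by at most $a/2$, so $F\ge a/2$ throughout. Strictly, $F$ could only touch $a/2$, so I would use $a/2$ with a slightly smaller constant to make this strict. On $E$, $F$ exceeds $a+3a-a/2>2a$ by the end of the window. By continuity, $F$ therefore hits $2a$ before $a/2$. I also need the solution to be alive throughout the window; since $F$ stays away from $0$ on it, the construction of Section~\ref{sec:positive} continues, and Lemma~\ref{lem:identities} gives the extension.

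\emph{Iteration.} Set $a_0=F_\sigma$ and define successive stopping times at which $F$ hits $a_02^{j}$ for integers $j$. The level index then behaves as a process dominated by a random walk with down-probability at most $p$, absorbed at $j=-k$ (with $k$ chosen so that $a_02^{-k}\le\varepsilon$) or at the first $j$ with $a_02^j\ge\rho_T$. The standard supermartingale $\bigl(p/(1-p)\bigr)^{-j}$, together with optional stopping, bounds the probability of reaching $-k$ first by $(p/(1-p))^{k}$. Choosing $p=1/5$ makes this at most $4^{-k}$. Since $k\ge\log_2(F_\sigma/\varepsilon)-2$, I get $4^{-k}\le(4\varepsilon/F_\sigma)^2$. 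The restriction $\tau_\varepsilon\le T$ is what guarantees that every step used starts at a time $\sigma'\le T$, so the window stays inside $[0,T+1]$.

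\emph{Main obstacle.} The delicate step is the one-step estimate, namely making the lower bound on the increase of $M$ hold uniformly in the past and in $\nu$. This includes the case $\nu<0$, where $\nu M$ pushes $W$ down. The identity-based bound above handles this, because it only uses $Y_{\sigma'}\le F_{\sigma'}$ and the monotonicity of $K$, and $\bar\nu>0$ enters only through the constant $c$. A second point to check is that the geometric bookkeeping indeed produces the constant $4$ and the exponent $2$; if it does not, I would reduce $p$ further, at the cost of a smaller $\rho_T$.
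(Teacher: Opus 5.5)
Your proposal is correct and follows essentially the paper's route. Both arguments use a one-step annulus bound: from level $a\le\rho_T$, $F$ reaches $a/2$ before $2a$ with conditional probability at most $p$, because \eqref{eq:PB} keeps $b$ from rising by $a/2$ in time of order $a^2$ while a Brownian push raises $M$. This bound is then iterated over dyadic levels with the gambler's-ruin supermartingale at $p=1/5$, $\theta=1/4$. Your single-window proof of the one-step bound, via \eqref{eq:max-identity} and the running supremum of $B$, is a slightly cleaner substitute for the paper's $J$ independent blocks of length $F_\sigma^2$.

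Fix two slips, and drop one unneeded claim:
\begin{enumerate}
\item The bottom level must satisfy $a_02^{-k}\ge\varepsilon$, not $\le\varepsilon$, so that hitting $\varepsilon$ forces hitting it; $k=\lfloor\log_2(F_\sigma/\varepsilon)\rfloor$ still gives your bound.
\item The supermartingale is $\bigl(p/(1-p)\bigr)^{j}$, not $\bigl(p/(1-p)\bigr)^{-j}$.
\item Survival through the window is not needed: if the solution dies there, then $\tau_{a/2}=\infty$ and no down-step occurs.
\end{enumerate}
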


We start the proof of the proposition with the following lemma.

\begin{lemma}\label{lem:corner-annulus}
Fix $T>0,\, q\in(0,1/2)$ and let $\sigma\le T$ be an $\rF$-stopping time. Suppose that $b$  satisfies \eqref{eq:PB}. There exists $\rho=\rho(T,q)>0$ such that, on the event $\{\sigma<\zeta,\, 0<F_\sigma<\rho\}$,
\begin{equation}\label{eq:one-annulus}
 \bP(\tau_{\frac12 F_\sigma}<\tau_{2F_\sigma}\mid\rF_\sigma)\le q.
\end{equation}
\end{lemma}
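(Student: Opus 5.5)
The idea is to compare two time scales. On a window of length $Kr^2$, where $r:=F_\sigma$, condition \eqref{eq:PB} keeps the upward motion of $b$ below $r/2$, so $F$ cannot drop to $r/2$ inside the window. At the same time, the Brownian increment over that window is of order $\sqrt K\, r$. It pushes the running maximum up by more than $r$ with probability close to $1$, which drives $F$ to $2r$ before the window ends.

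\emph{Step 1: choice of $K$ and $\rho$.} Given $q\in(0,1/2)$, set $c:=2+\tfrac32\bar\nu$ and choose $K\ge1$ with $\bP(\sup_{0\le u\le 1}\beta_u< c/\sqrt K)\le q$ for a standard Brownian motion $\beta$. This is possible because $\sup_{u\le1}\beta_u\overset{d}{=}|N|$ with $N$ standard normal, so the probability is at most $c\sqrt{2/(\pi K)}$. By \eqref{eq:PB} applied on $[0,T+1]$, $\omega^+_{b,T+1}(h)/\sqrt h\to0$. Hence there is $\rho\in(0,1)$ with $\rho^2K\le1$ such that
\[
\omega^+_{b,T+1}(Kr^2)<r/2\qquad\text{for all } r\in(0,\rho).
\]
All times in $[\sigma,\sigma+Kr^2]$ then lie in $[0,T+1]$.

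\emph{Step 2: $F$ cannot reach $r/2$ early.} Since $M$ is nondecreasing, for $t\in[\sigma,\sigma+Kr^2]$ with $t<\zeta$ we have
\[
F_t\ge M_\sigma-b(t)=r-(b(t)-b(\sigma))\ge r-\omega^+_{b,T+1}(Kr^2)>r/2 .
\]
Therefore $\{\tau_{r/2}<\tau_{2r}\}\subset\{\tau_{2r}>\sigma+Kr^2\}$.

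\emph{Step 3: a Brownian surge forces $F$ to $2r$.} Subtracting \eqref{eq:regulator} at times $\sigma$ and $t$ and writing $W=M-Y$ gives
\[
\bar\nu(M_t-M_\sigma)=(B_t-B_\sigma)+(Y_t-Y_\sigma)+(K_t-K_\sigma).
\]
On $[\sigma,\tau_{2r}\wedge\zeta)$ we have $0\le Y\le F\le 2r$, and $K_t-K_\sigma\ge0$, so $\bar\nu(M_t-M_\sigma)\ge B_t-B_\sigma-2r$. Suppose $B_t-B_\sigma\ge cr$ for some $t\le\sigma+Kr^2$, and that $\tau_{2r}$ has not yet occurred. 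Then $M_t-M_\sigma\ge\tfrac32 r$, while $b(t)-b(\sigma)<r/2$, so $F_t>2r$. By continuity of $F$, $\tau_{2r}\le t$, a contradiction.

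\emph{Step 4: conditional probability.} Combining the steps, on $\{\sigma<\zeta,\,0<F_\sigma<\rho\}$,
\[
\bP(\tau_{r/2}<\tau_{2r}\mid\rF_\sigma)\le\bP\Bigl(\sup_{0\le u\le Kr^2}(B_{\sigma+u}-B_\sigma)<cr\,\Big|\,\rF_\sigma\Bigr).
\]
By the strong Markov property of $B$ at $\sigma$ and Brownian scaling, the right-hand side equals $\bP(\sup_{u\le1}\beta_u<c/\sqrt K)\le q$.

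The main obstacle is the lifetime $\zeta$: Steps 2 and 3 need the solution to be alive on the relevant part of $[\sigma,\sigma+Kr^2]$. I would handle this with Lemma~\ref{lem:identities}. A finite lifetime can only occur by $F\to0$, which requires first passing through $r/2$, and that is excluded on the window by Step 2. So before $\tau_{2r}$ the solution extends continuously up to $\tau_{2r}\wedge(\sigma+Kr^2)$, and the argument goes through with $\zeta$ interpreted accordingly.
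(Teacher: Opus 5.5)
Your proposal is correct. Its skeleton matches the paper's proof: (PB) prevents $F$ from reaching $F_\sigma/2$ on a window of length of order $F_\sigma^2$ (your Step 2 is exactly the paper's first observation), and a large Brownian rise pushes $M(W)$, hence $F$, past $2F_\sigma$ via $\bar\nu(M_t-M_s)=(B_t-B_s)+(K_t-K_s)+(Y_t-Y_s)$.

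You differ from the paper in the probabilistic step. The paper cuts the window into $J$ blocks of length $F_\sigma^2$ and tracks the events $E_j$, which keep the state controlled at each $s_j$ (so $Y_{s_j}<2F_\sigma$). It then applies the strong Markov property at every $s_j$ and obtains $(1-p_0)^J$ from independent block increments. You observe instead that $\bar\nu(M_t-M_\sigma)\ge B_t-B_\sigma-Y_\sigma$ holds for every $t$ in the window, with only the initial value $Y_\sigma\le F_\sigma$ entering. One application of the strong Markov property at $\sigma$, Brownian scaling, and the law of $\sup_{u\le 1}\beta_u$ then suffice. This is shorter and needs no bookkeeping at intermediate times. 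The cost is that your failure bound decays only like $K^{-1/2}$, whereas the paper's decays exponentially in $J$. So your window has length about $q^{-2}F_\sigma^2$ rather than $\log(1/q)F_\sigma^2$. That is immaterial here, because $q$ is fixed and (PB) is an $o(\sqrt h)$ condition.

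Your final paragraph on the lifetime is unnecessary, and as phrased it is not right. The lemma concerns an arbitrary solution with lifetime $\zeta$. The claim that ``a finite lifetime can only occur by $F\to0$'' holds only for a maximal solution, since restricting a solution to $[0,\zeta')$ produces another solution. Extending the solution also changes the process to which the events $\tau_a$ refer.

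The fix is one line, and it is how the paper handles it. By definition $\tau_{r/2}$ is an infimum over $(\sigma,\zeta)$, so on $\{\tau_{r/2}<\tau_{2r}\}$ you have $\tau_{r/2}<\zeta$, and Step 2 gives $\tau_{r/2}>\sigma+Kr^2$. Hence the whole window $[\sigma,\sigma+Kr^2]$ lies in $[\sigma,\zeta)$ and precedes $\tau_{2r}$. This is all Step 3 needs to give $\{\tau_{r/2}<\tau_{2r}\}\subset\{\sup_{u\le Kr^2}(B_{\sigma+u}-B_\sigma)<cr\}$.
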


\begin{proof}
Let $p_0:=\bP\{N(0,1)\ge 2\bar\nu+3\}>0$ and $J$ be an integer so that
$(1-p_0)^J\le q$. From \eqref{eq:PB}, we can take $\rho=\rho(T,q)$ so small that $J \rho^2\le 1$ and that $\omega^+_{b,T+1}(Jr^2)<r/2$ for all $0<r\le\rho$.
We observe that if $F_t=F_\sigma/2$ for some $t\in(\sigma,\zeta)$, then
\[
 b(t)-b(\sigma)=M_t(W)-M_\sigma(W) + F_\sigma-F_t\ge F_\sigma/2.
\]
Consequently, $\tau_{\frac12 F_\sigma}>\sigma+J F_\sigma^2$ on the event $\{\sigma<\zeta,\,0<F_\sigma<\rho\}$.

Write $s_j=\sigma+j F_\sigma^2$ for $j=0,\cdots,J$. They are $\rF_\sigma$-measurable random variables, and in particular $\rF$-stopping times. 
Recall $Y=M(W)-W$ in \eqref{eq:F-Y}.  From \eqref{eq:regulator}, we have $B_t+K_t = \bar\nu(M_t(W)-x)-Y_t$. Define the event 
\[
E_j:=\{\sigma<\zeta,\,0<F_\sigma<\rho\}\cap \{s_j<\zeta\} \cap \{\tau_{\frac12 F_\sigma}\wedge\tau_{2F_\sigma}>s_j\} \in \rF_{s_j}.
\]
We claim that $E_{j+1}\subseteq E_j\cap \{B_{s_{j+1}}-B_{s_j}< (2\bar\nu +3)F_\sigma\}$. On the event $E_j\cap \{s_{j+1}<\zeta\}$, we have $F_{s_j}\in(F_\sigma/2,2F_\sigma)$ and $Y_{s_j}\le F_{s_j}<2 F_\sigma$. If in addition $B_{s_{j+1}}-B_{s_j}\ge (2\bar\nu +3)F_\sigma$, we will have
\[
 \bar{\nu}(M_{s_{j+1}}(W)-M_{s_j}(W)) \ge (B_{s_{j+1}}-B_{s_j}) - Y_{s_j} > (2\bar\nu+1)F_\sigma.
\]
But on this event, $b(s_{j+1})-b(s_j)<F_\sigma/2$ whenever that increment is positive.  Hence $F_{s_{j+1}}>2F_\sigma$ and thus $\tau_{2F_\sigma}<s_{j+1}$, which contradicts $E_{j+1}$ and proves the claimed inclusion. 
Conditionally on $\rF_{s_j}$, the Brownian event $\{B_{s_{j+1}}-B_{s_j}\ge (2\bar\nu +3)F_\sigma\}$ has probability $p_0$ from the strong Markov property of $B$ at $s_j$. Thus $\bP(E_{j+1}\mid\rF_{s_j})\le (1-p_0)\ind_{E_j}$. Iteration over the $J$ blocks gives, on the event $\{\sigma<\zeta,\,0<F_\sigma<\rho\}$,
\begin{align}\label{eq:EJ}
 \bP\{s_J<\zeta,\, \tau_{\frac12 F_\sigma}\wedge\tau_{2F_\sigma} > \sigma + J F_\sigma^2\mid\rF_\sigma\}
 \le(1-p_0)^J\le q.
\end{align}
On the event $\{\sigma<\zeta,\,0<F_\sigma<\rho\}\cap\{\tau_{\frac12 F_\sigma}<\tau_{2F_\sigma}\}$, we have $\tau_{\frac12F_\sigma}<\infty$ and hence $s_J<\zeta,\, \tau_{\frac12 F_\sigma}>\sigma+J F_\sigma^2$ from the discussion in the first paragraph. 
It together with \eqref{eq:EJ} proves \eqref{eq:one-annulus}.
\end{proof}

\begin{proof}[Proof of Proposition~\ref{prop:corner-power}]
Fix $q=1/5$ and take $\rho_T=\rho$ in Lemma~\ref{lem:corner-annulus}. Let $k_* = \lceil \log_2(\rho_T/F_\sigma)\rceil$ and set $r:= 2^{-k_*} \rho_T$, thus $F_\sigma\in[r,2r)$. We shall work on the event 
\[A_k:=\{ \sigma<\zeta,\,\varepsilon<F_\sigma<\rho_T\}\cap \{k_*=k\}\in\rF_\sigma\]
for every integer $k\ge 1$, so $r$ is a deterministic constant on each $A_k$. If $\varepsilon\ge r$, then $\varepsilon/F_\sigma \ge1/2$ hence \eqref{eq:corner-probability} is trivial. We now assume $\varepsilon<r$. Let $m=\lfloor \log_2(r/\varepsilon)\rfloor$ and $r_0=2^{-m}r\in[\varepsilon,2\varepsilon)$. 
On the event $A_k$, we define $\sigma_0 := \inf\{t\ge \sigma: F_t\in\{r,2r\}\}\wedge T\wedge\zeta$ and then $Z_0:=\log_2(F_{\sigma_0}/r)$ if $\sigma_0<\zeta$ and $F_{\sigma_0}\in\{r,2r\}$ otherwise $\infty$. We then recursively define, for every $j\ge 0$, if $Z_j=\infty$ or $\sigma_j\ge T\wedge\zeta$, set
\[
\sigma_{j+1}:=\sigma_j,\qquad Z_{j+1}:=\infty;
\]
otherwise
\begin{align*}
\sigma_{j+1} &:= \inf\{t>\sigma_j:F_t\in\{2^{Z_j-1}r,2^{Z_j+1}r\}\}\wedge T\wedge\zeta, \\
Z_{j+1} &:= \begin{cases}
Z_j+1, & \text{if}\quad  \sigma_{j+1}<\zeta ,\,F_{\sigma_{j+1}} = 2^{Z_j+1}r;\\
Z_j-1, & \text{if}\quad  \sigma_{j+1}<\zeta,\, F_{\sigma_{j+1}} = 2^{Z_j-1}r;\\
\infty, & \text{otherwise}.
\end{cases}
\end{align*}
Set $\widehat\tau = \inf\{j\ge 0: Z_j\in\{-m,k,\infty\}\}.$
By Lemma~\ref{lem:corner-annulus}, we have on the event $A_k\cap \{\widehat\tau>j\}$,
\[
\bP(Z_{j+1} = Z_j-1\mid \rF_{\sigma_j} ) \le q =\frac15.
\]
Therefore, with $\theta=q/(1-q)=1/4$, and since $x\mapsto x\theta^{-1}+(1-x)\theta$ is increasing, we deduce that for all $j\ge 0$,
\[
\bE[\theta^{Z_{(j+1)\wedge\widehat\tau}}\mid \rF_{\sigma_j}] \le \theta^{Z_{j\wedge\widehat\tau}} (q\theta^{-1} + (1-q)\theta) = \theta^{Z_{j\wedge\widehat\tau}},
\]
with $\theta^\infty:=0$ by convention.
We now observe that $\tau_\varepsilon<\tau_{\rho_T},\, \tau_\varepsilon\le T$ implies $Z_{\widehat\tau} = -m$ and hence, on the event $A_k$,
\begin{align*}
&\bP(\tau_\varepsilon<\tau_{\rho_T},\ \tau_\varepsilon\le T\mid\rF_{\sigma_0}) 
\le \bP(Z_{\widehat\tau}=-m\mid\rF_{\sigma_0})\le \theta^m\bE(\theta^{Z_{\widehat{\tau}}}\mid\rF_{\sigma_0}) \le \theta^{m+Z_0} \le \theta^m\\
=\, & 2^{-2m} \le \left(\frac{2\varepsilon}{r}\right)^2 \le \left(\frac{4\varepsilon}{F_{\sigma}}\right)^2,
\end{align*}
since $\varepsilon\le 2^{-m} r<2\varepsilon$ and $r>F_\sigma/2$.
Conditioning on $\rF_\sigma$ and and summing over the disjoint events $A_k$ over $k$ prove \eqref{eq:corner-probability}.
\end{proof}

\begin{corollary}\label{cor:existence}
Suppose that $b$ satisfies \eqref{eq:PB}. For every $x>0$ and $\nu<1$, there exists a unique continuous and $\rF$-adapted pair $(W,K)$ solving \eqref{eq:regulator} for all $t\ge 0$.
\end{corollary}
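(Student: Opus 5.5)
The plan is to run the deterministic construction of this section pathwise with $e=B$, and then use Proposition~\ref{prop:corner-power} to show that $\sigma_*=\infty$ almost surely.

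\emph{Step 1 (construction up to $\sigma_*$).} For each $\omega$, apply the alternating construction \eqref{eq:max-phase}--\eqref{eq:boundary-phase} with driver $e=B(\omega)$. Since $x>0$ and $b(0)=0$, this gives a unique solution $(W,K)$ of \eqref{eq:regulator} on $[0,\sigma_*)$. Because the construction is nonanticipative, $(W,K)$ is $\rF$-adapted. The times $\sigma^{(n)}$ are hitting times of continuous adapted processes, hence stopping times, and $\sigma_*=\lim_n\sigma^{(n)}$ is a stopping time as well.

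\emph{Step 2 (extension to $\sigma_*$).} If $\sigma_*<\infty$, Lemma~\ref{lem:identities} extends $(W,K)$ continuously to $\sigma_*$. Then $F_{\sigma_*}=0$, where $F=M(W)-b$ as in \eqref{eq:F-Y}. Note that $F_0=x>0$.

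\emph{Step 3 (the corner is not reached).} Fix $T>0$ and let $\rho_T$ be given by Proposition~\ref{prop:corner-power}. Define stopping times by induction: $\eta_0:=0$ if $F_0<\rho_T$, and otherwise $\eta_0:=\inf\{t:F_t=\rho_T/2\}$. After each visit, let $\eta'_i$ be the next time $F$ hits $\rho_T$, and let $\eta_{i+1}$ be the next time after $\eta'_i$ that $F$ hits $\rho_T/2$.

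On the event $\{\sigma_*\le T\}$, the process $F$ reaches $0$ continuously. It therefore descends from $\rho_T/2$ to $0$ during some excursion that starts at some $\eta_i\le T$ and stays below $\rho_T$. By uniform continuity of $F$ on $[0,T]$ (after the extension of Step 2), there can be only finitely many upcrossings from $\rho_T/2$ to $\rho_T$ before $T$. So only finitely many $\eta_i\le T$ occur, although their number is random.

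Apply Proposition~\ref{prop:corner-power} at $\sigma=\eta_i\wedge T$ on $\{\eta_i<\sigma_*\wedge T\}$, with $F_\sigma=\min(x,\rho_T/2)$ for $i=0$ and $F_\sigma=\rho_T/2$ otherwise. This gives, for every $\varepsilon>0$,
\[
\bP(\tau_\varepsilon<\tau_{\rho_T},\ \tau_\varepsilon\le T\mid\rF_{\eta_i})\le (4\varepsilon/F_{\eta_i})^2 .
\]
Hence $\bP(\text{$F$ hits $0$ before $\rho_T$ and before $T$, starting from } \eta_i)=0$ for each $i$: the event is contained in the one above for every $\varepsilon$, and we let $\varepsilon\downarrow0$. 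There is a technicality here: the proposition is stated on $\{\sigma<\zeta\}$ with $\zeta=\sigma_*$. I will handle it by noting that reaching $0$ before $\rho_T$ implies reaching every $\varepsilon$ before $\rho_T$ with $\tau_\varepsilon<\zeta$.

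Taking a countable union over $i$ gives $\bP(\sigma_*\le T)=0$. Letting $T\to\infty$ through the integers gives $\sigma_*=\infty$ a.s. Uniqueness then follows from Step 1, since any continuous adapted solution must agree with the constructed one on $[0,\sigma_*)$, which is all of $[0,\infty)$.

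The main obstacle is Step 3: keeping track of the stopping-time bookkeeping and of the conditioning events on $\{\sigma<\zeta\}$ so that Proposition~\ref{prop:corner-power} applies at each return time $\eta_i$. In particular, the finiteness of the number of returns needs care, since it comes from path continuity and not from a probabilistic bound.
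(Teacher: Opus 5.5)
Your proposal is correct and follows essentially the same route as the paper: run the alternating construction up to $\sigma_*$, extend via Lemma~\ref{lem:identities}, rule out $F$ reaching $0$ before $T$ by applying Proposition~\ref{prop:corner-power} at countably many stopping times and letting $\varepsilon\downarrow 0$, then send $T\to\infty$. The only difference is that the paper takes rational deterministic times $r$ with $F_r\in(0,\rho_T)$ and $F<\rho_T$ on $[r,\zeta)$, which avoids your finiteness-of-crossings argument for the $\eta_i$; also, $F_{\eta_0}$ equals $x$ when $x<\rho_T$ (not $\min(x,\rho_T/2)$), a harmless slip since only $0<F_{\eta_i}<\rho_T$ is used.
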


\begin{proof}
Suppose there exists a solution $(W,K)$ of \eqref{eq:regulator} with finite maximal lifetime $\zeta_{\max}$. By Lemma~\ref{lem:identities}, the solution can be extended at time $\zeta_{\max}$. If $F_{\zeta_{\max}}>0$, then we can continue the solution after time $\zeta_{\max}$ by alternating \eqref{eq:max-phase}--\eqref{eq:boundary-phase}, a contradiction. Therefore $F_{\zeta_{\max}}=0$. 

Fix $T>0$. On the event $\{\zeta_{\max}\le T,\, F_{\zeta_{\max}}=0\}$, there exists some rational time $r<\zeta_{\max}$ such that $F_r\in (0,\rho_T)$ and $F_t<\rho_T$ for all $r\le t< \zeta_{\max}$. Applying \eqref{eq:corner-probability} with $\sigma=r$ for all $r<T$ rational and sending $\varepsilon\downarrow 0$, we deduce that $\bP(F_{\zeta_{\max}}=0,\,\zeta_{\max}\le T)=0$. Sending $T\to \infty$ and combining $F_{\zeta_{\max}}=0$ imply $\zeta_{\max}=\infty$ a.s. Similar arguments also yield that $\sigma_*=\infty$, i.e. $F_t>0$ for all $t\ge 0$. Therefore the pathwise uniqueness and adaptedness of the solution $(W,K)$ follow from the alternating construction shown at the beginning of this section.
\end{proof}

We conclude this section with a simple observation.

\begin{proposition}\label{prop:positive-gap}
Fix $x\ge 0$ and assume that $b$ is continuous and locally of finite variation and satisfies \eqref{eq:PB}. Suppose that \eqref{eq:regulator} has an $\rF$-adapted solution $(W,K)$ on $[0,\infty)$. Then
\begin{equation}\label{eq:K-is-local-time}
 K_t=\frac12L_t^0(W-b),\qquad t\ge0.
\end{equation}
In particular, Theorem~\ref{thm:main} holds for $x>0$.
\end{proposition}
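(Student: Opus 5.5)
Set $X:=W-b$. Since $B$ is a Brownian motion, $M(W)$ and $K$ are continuous increasing, and $b$ is continuous and locally of finite variation, \eqref{eq:regulator} shows that $X$ is a continuous semimartingale with $\langle X\rangle_t=t$. Its finite variation part is $\nu M(W)+K-b$. We apply Tanaka's formula to $X^+=X$ (recall $X\ge0$):
\[
X_t = X_0+\int_0^t \ind_{\{X_s>0\}}\dd X_s+\frac12 L^0_t(X).
\]
Comparing with $X_t=X_0+B_t+\nu M_t(W)+K_t-b(t)$ and using uniqueness of the canonical decomposition, the martingale parts give $\int_0^t\ind_{\{X_s=0\}}\dd B_s=0$. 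This is consistent, since the occupation formula with $\langle X\rangle_t=t$ shows that $\{X=0\}$ has Lebesgue measure zero. The finite variation parts give
\[
\frac12 L^0_t(X)=\int_0^t\ind_{\{X_s=0\}}\bigl(\nu\,\dd M_s(W)+\dd K_s-\dd b(s)\bigr)=K_t+\nu\int_0^t\ind_{\{X_s=0\}}\dd M_s(W)-\int_0^t\ind_{\{X_s=0\}}\dd b(s),
\]
where we used that $\dd K$ is carried by $\{X=0\}$.

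The term with $\dd M$ vanishes. The measure $\dd M$ is carried by $\{W=M(W)\}$, and on $\{X=0\}$ this would force $F=M(W)-b=0$. By Corollary~\ref{cor:existence}, $F>0$ for $t>0$ when $x>0$. For general $x\ge0$ the same conclusion holds, because $\dd M(W)$ charges no single point, so the time $0$ is irrelevant. More directly, Proposition~\ref{prop:corner-power} applied at rational times gives that $\{t>0:F_t=0\}$ is a.s. contained in a set not charged by $\dd M$. Hence $\ind_{\{X=0\}}\dd M=0$.

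The main obstacle is the term $\int\ind_{\{X_s=0\}}\dd b(s)$. We split $\dd b=\dd b^+-\dd b^-$ (Jordan decomposition). Since $L^0(X)$ and $K$ are increasing, it suffices to show that both parts of $\int \ind_{\{X=0\}}\dd b$ vanish. The approach is to show that a.s. the set $Z=\{t:W_t=b(t)\}$ has $|\dd b|$-measure zero, and (PB) is used here. Note that the nonexistence example of Theorem~\ref{thm:nonexistence} shows that this is exactly where regularity is needed.

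We proceed as follows. Fix $T$ and a small $h$. Condition (PB) says that on $[0,T]$ the boundary rises by at most $o(\sqrt h)$ over intervals of length $h$. Also, $|\dd b|$ is a finite measure, so its restriction to the Lebesgue-null set $Z$ is what must vanish. We compare $X$ near a point of $Z$ with a reflected Brownian motion. Below the maximum face, $X=B+K-(b-b(\cdot))$ locally, so $K$ over $[s,s+h]$ is controlled by Skorokhod's lemma \eqref{eq:K-identity}:
\[
K_{s+h}-K_s\le \sup_{[s,s+h]}\bigl(b(u)-b(s)-(B_u-B_s)\bigr)^+ .
\]

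A second route avoids fine geometry. By Tanaka's formula, $L^0_t(X)=\lim_{\varepsilon\downarrow 0}\varepsilon^{-1}\int_0^t\ind_{\{0\le X_s<\varepsilon\}}\dd s$. We aim to show that the decomposition yields $L^0(X)/2\ge K$ and, via occupation density, $L^0(X)/2\le K$. Using the right-continuity in $a$ of $L^a(X)$ and the fact that $L^a(X)=0$ for $a<0$, Tanaka gives $L^0(X)-L^{0-}(X)=2\int\ind_{\{X=0\}}\dd V$ with $V$ the finite variation part, so $\int\ind_{\{X=0\}}\dd V\ge0$ automatically. It therefore remains to show that $\int \ind_{\{X=0\}}\dd b^+=0$ and $\int \ind_{\{X=0\}}\dd b^-=0$.

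For $\dd b^-$, at times where $b$ decreases the reflection is inactive in the limit. Concretely, $\dd K$ is carried by the times when $b-B$ reaches a new running maximum, and there $b$ cannot be locally decreasing faster than $B$ oscillates. We plan to make this rigorous by showing that $\{t:X_t=0\}$ is contained, up to a $|\dd b|$-null set, in the set of left-increase points of $b-B$. That set has $\dd b$-measure zero, because the Brownian increments over a deterministic set of positive $|\dd b|$-measure are a.s. of order $\sqrt h\gg\omega^+_{b,T}(h)$, by a Fubini argument over $|\dd b|$ combined with the law of the iterated logarithm at $|\dd b|$-a.e. fixed time. This Fubini/LIL step is the key step. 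Concluding \eqref{eq:K-is-local-time} then gives Theorem~\ref{thm:main} for $x>0$ by Corollary~\ref{cor:existence}.
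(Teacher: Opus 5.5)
Your architecture is the paper's. You apply Tanaka's formula to $X=X^+$, kill the $\dd B$ term by the occupation formula, and reduce everything to showing that the contact set $\{X=0\}$ is $|\dd b|$-null. That last step combines the law of the iterated logarithm at fixed times, \eqref{eq:PB}, and Fubini over the atomless measure $|\dd b|$.

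\textbf{The gap: the key step is only announced.} You write ``we plan to make this rigorous'', and the mechanism you offer is not the right one. The inclusion of $\{X=0\}$ in the left-increase points of $b-B$ must be proved at every point of $\{X=0\}$, not only on the support of $\dd K$. Moreover, $K$ is globally the Skorokhod regulator of $b-\bar\nu x-B-\nu M(W)$ (see \eqref{eq:K-identity}), not of $b-B$. The two preliminary ``routes'' (the Skorokhod bound on increments of $K$, and the jump $L^0-L^{0-}$) do not advance the argument.

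What is missing is a short pathwise inequality. Fix a deterministic $t>0$. On $\{X_t=0\}$, for $0<h<t$, use $X_{t-h}\ge0$ and the monotonicity of $K$ to get
\[
 B_t-B_{t-h}\le \bigl(b(t)-b(t-h)\bigr)-\nu\bigl(M_t(W)-M_{t-h}(W)\bigr).
\]
If $\nu\ge0$, drop the last term. If $\nu<0$, there are two cases:
\begin{enumerate}[(i)]
\item $F_t>0$: then $W_t=b(t)<M_t(W)$, so $M(W)$ is constant on $[t-h,t]$ for small $h$;
\item $F_t=0$: then $F_{t-h}\ge0$ gives $0\le M_t(W)-M_{t-h}(W)\le b(t)-b(t-h)$.
\end{enumerate}
In all cases, for all small $h$,
\[
 B_t-B_{t-h}\le\max\{1,\bar\nu\}\bigl(b(t)-b(t-h)\bigr)^+\le\max\{1,\bar\nu\}\,\omega^+_{b,t}(h)=o(\sqrt h).
\]
The law of the iterated logarithm for $h\mapsto B_t-B_{t-h}$ excludes this almost surely. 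Hence $\bP(X_t=0)=0$ for every $t>0$, and Fubini gives \eqref{eq:db-contact-zero}.

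\textbf{The $\dd M$ term for $x=0$ is also loose.} For $x>0$ your appeal to Corollary~\ref{cor:existence} is fine. For $x=0$, the remark that $\dd M(W)$ charges no single point does not address zeros of $F$ at positive times. Your appeal to Proposition~\ref{prop:corner-power} at rational times also needs one more observation: every positive zero of $F$ is a limit from the left of times with $F\in(0,\rho_T)$. This is true, because $F\equiv0$ on an interval forces $X\equiv0$ there, contradicting the Lebesgue-nullity of $\{X=0\}$.

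The paper avoids this detour. Since $F=M(W)-b\ge0$ is continuous and of finite variation, $\ind_{\{F=0\}}\dd M(W)=\ind_{\{F=0\}}\dd b$. Since $\dd M(W)$ lives on $\{W=M(W)\}$, this gives $\ind_{\{X=0\}}\dd M(W)=\ind_{\{X=0,\,F=0\}}\dd b$. That vanishes by the same null-contact statement, uniformly in $x\ge0$.
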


\begin{proof}
Put $X=W-b$ and fix $T\ge 0$.  We first prove
\begin{equation}\label{eq:db-contact-zero}
 \int_0^T\ind_{\{X_t=0\}}\,|\dd b|(t)=0\qquad\text{a.s.}
\end{equation}
Fix a deterministic $t>0$. On the event $\{X_t=0\}$, we obtain from \eqref{eq:regulator} that for $0<h<t$, since $K$ is increasing and $X_{t-h}\ge 0$,
\[
0 = X_t \ge  (B_t-B_{t-h}) - (b(t)-b(t-h)) + \nu (M_t(W)-M_{t-h}(W)).
\]
Recall $F=M(W)-b$.
If $F_t> 0$, then $M(W)$ is constant in $[t-h,t]$ for all $h>0$ sufficiently small; if $F_t=0$, then $F_{t-h}\ge 0$ implies $M_t-M_{t-h}\le b(t)-b(t-h)$.
We conclude that on the event $\{X_t=0\}$, for all sufficiently small $h>0$,
\[
B_t-B_{t-h}\le \max\{1,\bar\nu\}(b(t)-b(t-h))^+.
\]
However, for every fixed $t>0$, the law of the iterated logarithm gives almost surely
\[
 \limsup_{h\downarrow0}
 \frac{B_t-B_{t-h}}{\sqrt{2h\log\log(1/h)}}=1,
\]
while \eqref{eq:PB} gives $(b(t)-b(t-h))^+=o(\sqrt h)$, yielding a contradiction. We deduce that $ \bP(X_t=0)=0$ for every fixed $t> 0$. Since $b$ is continuous, the measure $|\dd b|$ has no atom at zero.  Equation \eqref{eq:db-contact-zero} now follows by Fubini's theorem.

Since $b$ is locally of finite variation, $X$ is a continuous semimartingale with $B$ as the local martingale part. Since $F=M(W)-b$ is continuous and locally of finite variation, we have $\ind_{\{F=0\}}\dd M(W) = \ind_{\{F=0\}}\dd b$. Recall that $\dd M(W)$ is supported on $\{M(W)=W\}$. Together with \eqref{eq:db-contact-zero}, this shows that $\dd M(W)$ does not charge $\{X=0\}$. 
We also have\( \int_0^t\ind_{\{X_s=0\}}\dd B_s=0,\) because the stochastic integral has quadratic variation $\int_0^t\ind_{\{X_s=0\}}\dd s=0$ by the occupation-density formula \cite[Corollary VI.1.6]{RevuzYor}.
Since $\dd K$ is carried by $\{X=0\}$, we conclude that
\[
 \int_0^t\ind_{\{X_s=0\}}\dd X_s=K_t.
\]
On the other hand, we apply Tanaka's formula to $X=X^+$ and thus derive $\int_0^t\ind_{\{X_s=0\}}\dd X_s=L_t^0(X)/2$. This proves \eqref{eq:K-is-local-time}. The second result follows by Corollary~\ref{cor:existence} and the identification \eqref{eq:K-is-local-time}.
\end{proof}

\section{Comparison of two solutions}
\label{sec:comparison}
This section provides results analogous to \cite[Lemmas 2--3]{ChaumontDoney} in the time-dependent-boundary setting. The proofs follow their strategy; we present the details needed in the time-dependent boundary case for completeness.

Assume $\nu\in [1/2,1)$ throughout this section. Fix $T\ge 0$. Let $w$ be the solution of \eqref{eq:deterministic-regulator}.
We say that the faces of $w$ are separated on some interval $I$ if
$M_t(w)>b(t)$ there. 
Suppose $w$ starts from $x>0$ such that the faces of $w$ are separated on $[0,T]$; as shown in the discussion at the beginning of Section~\ref{sec:positive}, the lifetime of $w$ is strictly greater than $T$.
We refer to \cite[Section 2]{ChaumontDoney} and define the completed \emph{round-trip} of $w$: intuitively speaking, a round-trip of $w$ starts from the maximum face, visits the boundary face and returns to the maximum face.
Thus it can be represented by times $t_1<t_2<t_3$ such that
\[
w_{t_1} = M_{t_1}(w),\qquad w_{t_2}=b(t_2),\qquad w_{t_3}=M_{t_3}(w)=M_{t_2}(w).
\]
Let $N_w(T)$ be the maximal number of completed round-trips with mutually disjoint interiors contained in $[0,T]$. Under the separated-face assumption on $w$ starting from $x>0$ on the time interval $[0,T]$, we have $\inf_{0\le s\le T}(M_s(w)-b(s))>0$; together with the uniform continuity of $w$, we obtain that $N_w(T)<\infty$.

The following is an analog of \cite[Lemma 3]{ChaumontDoney}, 
which bounds the distance between two solutions in terms of the round-trips completed by the solution with the lower initial state.
 
\begin{proposition}\label{prop:comparison}
Let $T,r,d\ge 0$. Let $w$ solve \eqref{eq:deterministic-regulator} with initial position $x=r$, and let $z$ be a solution with initial position $x=r+d$ and the same $e$ and $b$.  Suppose that both pairs of faces are separated on $[0,T]$.
Then
\begin{equation}\label{eq:comparison-bound}
 \sup_{0\le t\le T}|z_t-w_t|
 \le d\,(\nu^*)^{2N_w(T)+1}.
\end{equation}
\end{proposition}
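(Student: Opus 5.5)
We follow the strategy of \cite[Lemma 3]{ChaumontDoney} and track the difference $\Delta_t:=z_t-w_t$ through the alternating phases \eqref{eq:max-phase}--\eqref{eq:boundary-phase}. Since both pairs of faces are separated on $[0,T]$, both solutions are obtained on $[0,T]$ by finitely many alternations, so $[0,T]$ splits into finitely many intervals on which each of $z$ and $w$ is either in a maximum phase or in a boundary phase. We first treat the monotone structure. A comparison argument via \eqref{eq:max-identity}--\eqref{eq:K-identity} should give $z\ge w$ and $M(z)\ge M(w)$ on $[0,T]$, since $\nu\ge 1/2$ gives $\nu^*\ge1$ and $\nu>0$. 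The bound will then come from recording how the gap in the running maxima, $D_t:=\bar\nu\bigl(M_t(z)-M_t(w)\bigr)$, and the gap $\Delta_t$ evolve.

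The key steps, in order, are as follows.

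\emph{Initial maximum phase.} On the initial common maximum phase, \eqref{eq:max-phase} gives $\Delta_t=d$ and $M_t(z)-M_t(w)=d$, consistent with the bound $d\nu^*\ge d$.

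\emph{Basic observations.} Write $z-w=\bar\nu d+\nu(M(z)-M(w))+k^z-k^w$. When neither process is regulated, $\Delta$ changes only through $\nu\,\dd(M(z)-M(w))$. When $z$ is at the boundary before $w$, which is impossible since $z\ge w\ge b$, or when $w$ hits the boundary, $k^w$ grows and shrinks $\Delta$. Once $w$ and $z$ both touch the boundary, Skorokhod's lemma makes them coincide until the next maximum phase, and $\Delta$ can only decrease under reflection.

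\emph{Growth during a maximum phase.} During a maximum phase of $w$ while $z$ is below its own maximum, $M(w)$ increases and $\Delta$ decreases. The dangerous case is the reverse: $z$ is at its maximum while $w$ is not. There $e$ increases by at most $Y^w$, and by \eqref{eq:max-phase} the maximum of $z$ grows at rate $\nu^*$ relative to $e$. Hence the discrepancy $\sup|\Delta|$ can be multiplied by at most $\nu^*$ each time the roles switch, that is, each time $w$ passes from the maximum face to the boundary face or from the boundary face back to the maximum face.

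\emph{Counting amplifications.} Each completed round-trip of $w$ contains two such switches, which accounts for the factor $(\nu^*)^{2}$ per round-trip. The initial phase together with a possibly incomplete final trip gives one extra factor. An induction over the successive times $t_1<t_2<t_3$ of the maximal disjoint round-trips then yields $\sup_{[0,t_3^{(n)}]}|\Delta|\le d(\nu^*)^{2n+1}$. A final segment that contains no completed round-trip contributes at most one further factor; we must check that this factor is already included in the exponent $2N_w(T)+1$.

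\emph{Main obstacle.} The hard part is the precise bookkeeping: showing that between consecutive completed round-trips of $w$ the gap grows by exactly a factor of at most $(\nu^*)^2$, and not by more. The difficulty arises when $z$ performs extra trips that $w$ does not, or when the phases of the two processes interleave. For this step we would first try to establish the one-phase inequality
\[
\sup_{[a,c]}\Delta\le \nu^*\,\Delta_a,
\]
valid on any interval $[a,c]$ on which $w$ stays away from one of its faces. We would prove it from the explicit formulas \eqref{eq:max-phase} and \eqref{eq:boundary-phase} restarted at time $a$, using Skorokhod's lemma to compare the two regulators. Chaining this inequality along the face-visit times of $w$ should then give \eqref{eq:comparison-bound}.
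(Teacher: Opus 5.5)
There is a genuine gap, and it lies in your first structural claim. The ordering $z\ge w$, $M(z)\ge M(w)$ on $[0,T]$ is false for $\nu>0$, and the estimate is driven precisely by the fact that the two solutions \emph{cross}. Take $b\equiv0$ and $r>0$, and let $e$ decrease until both $w$ and $z$ have been pushed onto the boundary. As you observe, Skorokhod's lemma then makes them coincide, while $M(w)=r<r+d=M(z)$. If $e$ now rises, $w$ regains its lower maximum first, at a time $C$ with $w_C=z_C=M_C(w)<M_C(z)$. Afterwards $w_t-z_t=\nu^*\sup_{C\le s\le t}(e_s-e_C)>0$ as long as $z$ stays below $r+d$; if the rise lies in $(\bar\nu d,d)$, one even gets $M(w)>M(z)$.

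So your remarks that ``$z$ at the boundary before $w$ is impossible'' and that ``$\Delta$ decreases during a maximum phase of $w$'' are not harmless. That decrease can carry $\Delta$ through zero, and the growth of $|\Delta|$ happens at these sign changes, not at the face switches of $w$. For the same reason your one-phase inequality $\sup_{[a,c]}\Delta\le\nu^*\Delta_a$ is false. Continue the example: let $e$ fall until both paths again coincide after a common reflection, then let it rise. At the time $a$ when $z$, which now has the lower maximum, regains it, one has $\Delta_a=0$ and $w$ is away from both faces. Yet $\Delta$ immediately becomes positive and grows up to $\nu^*(M_a(w)-M_a(z))>0$. The current gap $\Delta_a$ is simply the wrong quantity to propagate.

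The paper instead works with the successive sign-change times $C_1<C_2<\cdots$ of $z-w$. Lemma~\ref{lem:crossover-face} shows that at a crossing the overtaking process is exactly at its own running maximum, strictly below the other one's ($w_C=z_C=M_C(w)<M_C(z)$). It also shows that the new order persists until the overtaken process regains its maximum. Lemma~\ref{lem:lower-regulator-order} gives $\dd k^{\mathrm{upper}}\le \dd k^{\mathrm{lower}}$ on any interval where the order is fixed. Putting this into the equation yields $D^*\le\nu(D^*+h)$, where $D^*$ is the supremum of the gap on $[C_n,C_{n+1}]$. Hence $D^*\le\nu^*h$, with $h=|M_{C_n}(z)-M_{C_n}(w)|$ bounded by the supremum of the gap on the previous interval. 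This is the correct form of your heuristic ``$e$ rises by at most $Y^w$ and the maximum grows at rate $\nu^*$'': the invariant is the gap between running maxima at crossing times.

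Finally, a contradiction argument using the explicit maximum-phase formulas shows that $w$ must hit $b$ in $(C_{2n},C_{2n+1}]$. So each pair of crossings contains a completed round-trip of $w$, and the number of crossings is at most $2N_w(T)$. Without the crossing picture and this gap-of-maxima invariant, the bookkeeping in your last step cannot be closed.
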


The discussion at the beginning of Section~\ref{sec:positive} shows that both $z$ and $w$ have lifetime greater than $T$ in the setting of Proposition~\ref{prop:comparison}. We start the proof with a lemma.

\begin{lemma}\label{lem:lower-regulator-order}
Let $w,z$ satisfy the conditions in Proposition~\ref{prop:comparison}.  Restricted to any interval $I$ on which $z\ge w$, the difference between the measures $\dd k^w$ and $\dd k^z$ is a nonnegative measure.
\end{lemma}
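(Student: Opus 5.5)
The plan is to localise around the contact set of $z$ with the boundary. There, both running maxima are frozen, so both regulators are explicit one-dimensional Skorokhod maps of the same driver, and these can be compared directly. Write $A:=\{t\in I: z_t=b(t)\}$. By \eqref{eq:deterministic-regulator}, $\dd k^z$ is carried by $A$. On $A$ we have $b(t)\le w_t\le z_t=b(t)$, hence $w_t=b(t)$ as well.

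\emph{Step 1: both maxima are frozen near $A$.} Fix $t_0\in A$. Both pairs of faces are separated on $[0,T]$, so $M_{t_0}(z)>b(t_0)=z_{t_0}$ and $M_{t_0}(w)>b(t_0)=w_{t_0}$. By continuity there is a relatively open interval $U\ni t_0$, $U\subseteq I$, on which $z<M(z)$ and $w<M(w)$. Hence $M(z)$ and $M(w)$ are constant on $U$.

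\emph{Step 2: explicit regulators on a subinterval.} Take any closed subinterval $[s,u]\subseteq U$ and put $f(v):=b(v)-(e_v-e_s)$ for $v\in[s,u]$. Since the maxima are constant on $[s,u]$, \eqref{eq:deterministic-regulator} gives $z_v=z_s+(e_v-e_s)+(k^z_v-k^z_s)$ with $z\ge b$, and $k^z$ increases only on $\{z=b\}$. The same holds for $w$. Skorokhod's lemma \cite[Lemma VI.2.1]{RevuzYor}, applied on $[s,u]$, then yields for $v\in[s,u]$
\[
 k^z_v-k^z_s=\Bigl(\sup_{s\le r\le v}f(r)-z_s\Bigr)^+,\qquad
 k^w_v-k^w_s=\Bigl(\sup_{s\le r\le v}f(r)-w_s\Bigr)^+ .
\]
Here $w_s\le z_s$ because $[s,u]\subseteq I$.

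\emph{Step 3: the difference is increasing.} Write $S_v:=\sup_{s\le r\le v}f(r)$, which is nondecreasing in $v$, and set $c_1:=w_s\le c_2:=z_s$. An elementary case check gives
\[
(S-c_1)^+-(S-c_2)^+=\min\{(S-c_1)^+,\,c_2-c_1\},
\]
which is nondecreasing in $S$. Hence $v\mapsto (k^w_v-k^w_s)-(k^z_v-k^z_s)$ is nondecreasing on $[s,u]$. Thus $\dd k^w-\dd k^z\ge 0$ as a measure on $[s,u]$, and therefore on all of $U$, since $U$ is exhausted by such closed subintervals.

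\emph{Step 4: globalisation.} The neighbourhoods $U$ constructed in Step 1 cover $A$. Since $I$ is second countable, countably many of them, $U_1,U_2,\dots$, suffice, and their union $G$ is relatively open in $I$. On $G$, $\dd k^w-\dd k^z\ge0$ follows by splitting $G$ into the disjoint Borel pieces $U_n\setminus\bigcup_{m<n}U_m$, on each of which the inequality holds by Step 3. On $I\setminus G\subseteq I\setminus A$, the measure $\dd k^z$ vanishes while $\dd k^w\ge 0$. Combining the two pieces proves the lemma.

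The main obstacle is Step 1. If either running maximum could increase at a contact time, the $\nu(M(z)-M(w))$ term would enter the Skorokhod maps and destroy the monotone comparison. The face-separation hypothesis of Proposition~\ref{prop:comparison} is exactly what rules this out. Everything else reduces to the one-dimensional fact in Step 3: the Skorokhod regulator started from a lower position increases at least as fast.
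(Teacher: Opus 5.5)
Your proof is correct, but it takes a different route from the paper's.

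The paper argues globally with $D=z-w$. Since the drivers cancel, $D=\bar\nu d+\nu(M(z)-M(w))+k^z-k^w$ is continuous, nonnegative and of finite variation on $I$. Tanaka's formula applied to $D^-\equiv 0$ then gives $\ind_{\{D=0\}}\dd D=0$. The common contact set $A=\{D=0,\ w=z=b\}$ carries $\dd k^z$, and by face separation it is charged by neither $\dd M(w)$ nor $\dd M(z)$. So the identity reduces to $\ind_A\dd k^z=\ind_A\dd k^w$, whence $\dd k^z=\ind_A\dd k^w\le\dd k^w$.

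You instead localise to windows around contact points where both maxima are frozen, and write both regulators explicitly via Skorokhod's lemma. You compare them through the elementary identity $(S-c_1)^+-(S-c_2)^+=\min\{(S-c_1)^+,c_2-c_1\}$ and globalise by a Lindel\"of covering. Face separation enters at exactly the same point in both arguments: the maxima cannot move at a common contact time.

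The paper's argument is shorter and needs no covering. It also yields the sharper statement $\dd k^z=\ind_A\dd k^w$, i.e.\ $\dd k^z$ is exactly the restriction of $\dd k^w$ to the common contact set. The price is invoking the vanishing of local time for continuous finite-variation functions.

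Your argument uses only the one-dimensional Skorokhod map and makes the mechanism explicit: the regulator started lower grows at least as fast. It also shows that on each window the excess $(k^w_v-k^w_s)-(k^z_v-k^z_s)$ is at most the initial gap $z_s-w_s$. The price is the localisation and the measure-gluing bookkeeping, which you handle correctly since both measures are finite on the bounded interval $I\subseteq[0,T]$.
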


\begin{proof}
From \eqref{eq:deterministic-regulator}, the difference $D=z-w$ is continuous, nonnegative, and of finite
variation on $I$. Applying Tanaka's formula to $D^-=0$ gives $\ind_{\{D=0\}}\dd D=0.$ Put
\[
 A:=\{t\in I:D_t=0,\ w_t=z_t=b(t)\}.
\]
The measure $\dd k^z$ is carried by $\{z=b\}$. We have $z\ge w\ge b$ on $I$, which implies that $\dd k^z$ is carried by $A$. Moreover, from the separated-face assumption, we obtain that neither $\dd M(w)$ nor $\dd M(z)$ charges $A$.
Therefore \( \ind_A\dd D=\ind_A(\dd k^z-\dd k^w)\) as signed measures, and thus
\[
 \dd k^z=\ind_A\dd k^z=\ind_A\dd k^w\le\dd k^w.
\]
This completes the proof.
\end{proof}

\begin{lemma}\label{lem:crossover-face}
Let $w,z$ satisfy the conditions in Proposition~\ref{prop:comparison}.
Suppose $z_s\ge w_s$ for some $s\ge 0$ and define $C:=\inf\{t>s:z_t<w_t\}$. If $C\le T$, then 
\begin{equation}\label{eq:w-max-at-crossing}
 w_C=z_C=M_C(w)<M_C(z).
\end{equation}
Define $C':=\inf\{t>C:z_t=M_C(z)\}$. Then $z\le w$ on $[C,C'\wedge T]$.
\end{lemma}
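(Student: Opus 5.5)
We prove \eqref{eq:w-max-at-crossing} first and then the ordering on $[C,C'\wedge T]$. Throughout, write $D:=z-w$. From \eqref{eq:deterministic-regulator} with the common driver $e$,
\[
D_t=\bar\nu d+\nu\bigl(M_t(z)-M_t(w)\bigr)+k^z_t-k^w_t .
\]

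\textbf{Step 1: the crossing time $C$.} By continuity, $D\ge0$ on $[s,C]$ and $D_C=0$, so $z_C=w_C$. We now identify where the crossing happens.

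The crossing cannot occur on the boundary face. Suppose $z_C=w_C=b(C)$. Since $M(z)>b$ and $M(w)>b$ near $C$, the faces are separated, so on a small interval $[C,C+\delta]$ neither maximum moves. The pair then behaves like two Skorokhod reflections at $b$ of the same driver. The lower solution $w$ satisfies $k^w_t=\sup_{C\le u\le t}[b(u)-w_C-(e_u-e_C)]^+$, and similarly for $z$ with the same starting value. Hence $z=w$ on $[C,C+\delta]$, contradicting the definition of $C$ as an infimum of times where $z<w$.

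So $z_C=w_C>b(C)$. Then $k^z$ and $k^w$ are constant near $C$, and $D$ can only change through $\nu(M(z)-M(w))$. Since $\nu>0$, in order for $D$ to become negative, $M(w)$ must increase just after $C$. This requires $w_C=M_C(w)$.

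It remains to show $M_C(z)>M_C(w)$. On $[s,C]$ we have $z\ge w$, so $M_C(z)\ge M_C(w)$. If equality held, then $z_C=w_C=M_C(w)=M_C(z)$, so both processes would sit on the maximum face with identical data. Right after $C$, both would then follow \eqref{eq:max-phase}-type dynamics $z_t=z_C+(e_t-e_C)+\nu^*\sup_{C\le u\le t}(e_u-e_C)$, and the same formula for $w$. They would therefore coincide on a neighbourhood of $C$, again contradicting the definition of $C$. Hence $M_C(z)>M_C(w)=w_C=z_C$.

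\textbf{Step 2: $z\le w$ on $[C,C'\wedge T]$.} On $[C,C')$ we have $z<M_C(z)$, so $M(z)$ is constant there. Let $E:=w-z$, which satisfies
\[
E_t=\nu\bigl(M_t(w)-M_C(w)\bigr)+\bigl(k^w_t-k^w_C\bigr)-\bigl(k^z_t-k^z_C\bigr)\quad\text{on }[C,C'),
\]
because $E_C=0$. Suppose $E$ becomes negative somewhere in $(C,C'\wedge T]$. Let $u$ be the last zero of $E$ before such a time. After $u$, on an interval where $z>w$, Lemma~\ref{lem:lower-regulator-order} (applied with the roles as stated, $z\ge w$) gives $\dd k^z\le\dd k^w$. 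Also $M(w)$ is nondecreasing. Hence $\dd E\ge0$ on that interval, so $E$ cannot decrease below $E_u=0$. This contradiction shows $E\ge0$, i.e. $z\le w$, on $[C,C'\wedge T]$.

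\textbf{Main obstacle.} The delicate part is Step 1. Excluding a crossing on the boundary face, and excluding equal maxima, both rest on local uniqueness of the explicit phase formulas. This uses the separated-face assumption, which guarantees that near $C$ exactly one face is active.
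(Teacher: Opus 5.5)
Your proof is correct and follows essentially the paper's route: local uniqueness of the boundary-phase and maximum-phase formulas at $C$ (with $\nu>0$ ruling out $M_C(w)>w_C$) gives \eqref{eq:w-max-at-crossing}, and a sign argument for $w-z$ on $[C,C']$ gives the ordering. In that last part your last-zero argument with Lemma~\ref{lem:lower-regulator-order} replaces the paper's Tanaka formula for $(z-w)^+$ combined with $\ind_{\{z>w\}}\dd k^z=0$.

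Two small repairs are needed. First, $z\ge w$ on $[s,C]$ does not by itself give $M_C(z)\ge M_C(w)$, since the running maxima also see $[0,s]$. The inequality nevertheless follows at once from $M_C(z)\ge z_C=w_C=M_C(w)$, which you had just established. Second, in the boundary-face step the Skorokhod formula should be written for the increment $k^w_t-k^w_C$ rather than for $k^w_t$.
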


\begin{proof}
Continuity of $w$ and $z$ gives $w_C=z_C$. We have $w_C\ne b(C)$; otherwise $M_C(w),M_C(z)>b(C)=w_C=z_C$ from the separated-face condition, and hence \eqref{eq:boundary-phase} implies that $z=w$ on $[C,C+\eta]$ for some $\eta>0$. We now prove that $M_C(w)=w_C$; if $M_C(w)>w_C$, then $\dd M(w),\dd k^w,\dd k^z\equiv 0$ on $[C,C+\eta]$ for some $\eta>0$. Therefore $w_t-z_t = -\nu (M_t(z)-M_C(z))\le 0$ for $t\in[C,C+\eta]$ from \eqref{eq:deterministic-regulator}, contradicting the definition of $C$. If $M_C(z)=z_C$, then \eqref{eq:max-phase} implies that $z=w$ on $[C,C+\eta]$ for some $\eta>0$, again a contradiction. It completes the proof of \eqref{eq:w-max-at-crossing}.

We now prove the second statement. We have $M(z)\equiv M_C(z)$ in $[C,C'\wedge T]$. From \eqref{eq:deterministic-regulator}, we have for all $t\in [C,C'\wedge T]$,
\[
D_t:=z_t-w_t = -\nu (M_t(w)-M_C(w)) + (k^z_t-k^z_C) - (k^w_t-k^w_C).
\]
We have $\ind_{\{D>0\}}\dd k^z \equiv 0$ since $z>w\ge b$ on $\{D>0\}$. Therefore, from Tanaka's formula, for all $t\in [C,C'\wedge T]$,
\[
0\le D^+_t = -\nu\int_C^t \ind_{\{D_s>0\}}\dd M_s(w) - \int_C^t \ind_{\{D_s>0\}}\dd k^w_s \le 0,
\]
and hence $D^+ \equiv 0$, i.e. $z\le w$ on $[C,C'\wedge T ]$.
\end{proof}

\begin{proof}[Proof of Proposition~\ref{prop:comparison}]
The case $d=0$ is immediate because of the uniqueness of the solution of the equation \eqref{eq:deterministic-regulator} up to the time $\sigma_*$ which is greater than $T$ as assumed. Assume henceforth $d>0$.
Using the notation in Lemma~\ref{lem:crossover-face}, we define $s=0$, $C_0=C'_0=0,\,C_1=C,\, C'_1=C'$ and then $C_2:=\inf\{t>C_1: z_t>w_t\}\ge C'_1$, $C'_2:=\inf\{t>C_2: w_t = M_t(w)\}$, etc. Then for every $t\in [0,T]$, we have 
\[
M_t(z) - M_t(w) \le \max\left\{d, \sup_{0\le s\le t}(z_s-w_s) \right\}.
\]
By Lemma~\ref{lem:lower-regulator-order}, we have $k^z-k^w\le 0$ on $[0,C_1\wedge T]$. Therefore, from \eqref{eq:deterministic-regulator}, 
\begin{align}\label{eq:comparison-middle-step-1}
z_s-w_s \le \bar\nu d + \nu \max\left\{d, \sup_{0\le s\le t}(z_s-w_s) \right\},\qquad 0\le s\le t\le C_1\wedge T,
\end{align}
and therefore $z-w\in [0,d]$ on $[0,C_1\wedge T]$.

Suppose $C_1<T$. Then by Lemma~\ref{lem:crossover-face}, we have $w_{C_1}=z_{C_1}=M_{C_1}(w)$ and $h:=M_{C_1}(z)-z_{C_1}> 0$. We observe that $z\le w$ on $[C_1,C_2\wedge T]$ and therefore $(k^w-k_{C_1}^w) - (k^z-k_{C_1}^z)\le 0$ on this time interval. Therefore, from \eqref{eq:deterministic-regulator},  we obtain that for $C_1\le s\le C_2\wedge T$,
\begin{align}\label{eq:comparison-middle-step-2}
w_s-z_s \le\nu\{(M_s(w)-M_{C_1}(w))-(M_s(z)-M_{C_1}(z))\} =\nu\{M_s(w)-M_s(z)+h\}.
\end{align}
Let $D^*_t:=\sup_{C_1\le s\le t} (w_s-z_s)$ for $C_1\le t\le C_2\wedge T$. For $C_1\le s\le t$, we have $M_{C_1}(w)<M_{C_1}(z)\le M_s(z)$ and $w_u\le z_u +D^*_t$ for $u\in [C_1,s]$. Thus $M_s(w)-M_s(z)\le D^*_t$, and \eqref{eq:comparison-middle-step-2} implies that $D^*_t \le \nu (D^*_t+h)$ hence $D^*_t\le \nu^* h\le \nu^*d$. 
Moreover, if $C_2<T$, then by Lemma~\ref{lem:crossover-face}, $w_{C_2}=z_{C_2}=M_{C_2}(z)$ and $h':= M_{C_2}(w)-w_{C_2}>0$. We have $h'\le D^*_{C_2}\le \nu^* d$ since $M_{C_1}(w)< M_{C_1}(z)$ and $w\le z + D^*$ on $[C_1,C_2]$.
Interchanging w and z gives \(\sup_{C_2\le s\le C_3\wedge T}(z_s-w_s)\le d(\nu^*)^2.\)
We therefore deduce by induction that, 
\begin{align}\label{eq:comparison-bound-middle-step}
\sup_{0\le s\le T} |z_s-w_s| \le d(\nu^*)^{\theta_T+1},
\end{align}
where $\theta_T:=\sup\{n\ge 0: C_n\le T\}$.

Let $T_{n}:=\inf\{t>C_{2n}:w_t=b(t)\}$ for $n\ge 0$. We now prove that $T_{n}\le C_{2n+1}$ if $C_{2n+1}\le T$; thus by Lemma~\ref{lem:crossover-face}, a round-trip of $w$ is completed in $[C_{2n-1},C_{2n+1}]$ (where $C_{-1}:=0$). Let $T_0:=\inf\{t>0:w_t=b(t)\}$. Then the explicit formula \eqref{eq:max-phase} for $z$ and $w$ yields that $z-w=d>0$ on $[0,T_0]$ and hence $T_0<C_1$. We now suppose $n\ge 1$. By Lemma~\ref{lem:crossover-face} and the separated-face assumption, we have $z_{C_{2n}}=w_{C_{2n}}=M_{C_{2n}}(z)>b(C_{2n}),\, M_{C_{2n}}(w)=M_{C_{2n}}(z)+h$ for some $h>0$. 
Suppose $T_n>C_{2n+1}$. On the time interval $[C_{2n},C_{2n+1}]$, since $z\ge w$, $z$ and $w$ are both greater than $b$. Therefore, for every $t\in [C_{2n},C_{2n+1}\wedge T_n]$, we can write the explicit form (similar to \eqref{eq:max-phase}) of the solution as
\begin{align*}
w_t &= w_{C_{2n}} + (e_t - e_{C_{2n}}) + \nu^* \left[\sup_{C_{2n}\le s\le t} (e_s - e_{C_{2n}}) -h \right]^+, \\
z_t &= z_{C_{2n}} + (e_t - e_{C_{2n}}) + \nu^* \sup_{C_{2n}\le s\le t} (e_s - e_{C_{2n}}).
\end{align*}
Therefore,
\[
z_t - w_t = \nu^* \min\left\{\sup_{C_{2n}\le s\le t} (e_s - e_{C_{2n}}),h \right\} \ge 0.
\]
But by Lemma~\ref{lem:crossover-face} again, $w_{C_{2n+1}} = z_{C_{2n+1}} = M_{C_{2n+1}}(w)$, which yields that $e_t \le e_{C_{2n}}$ for all $t\in [C_{2n},C_{2n+1}]$. Thus
\[
w_{C_{2n+1}} =  w_{C_{2n}} + (e_{C_{2n+1}} - e_{C_{2n}}) \le w_{C_{2n}} = M_{C_{2n}}(z) < M_{C_{2n}}(w) \le M_{C_{2n+1}}(w) = w_{C_{2n+1}},
\]
a contradiction. Therefore, a round-trip of $w$ is completed in $[C_{2n-1},C_{2n+1}]$ for all $n$ such that $C_{2n+1}\le T$, which implies $\theta_T\le 2 N_w(T)$. Combining this bound with \eqref{eq:comparison-bound-middle-step} proves \eqref{eq:comparison-bound}.
\end{proof}


\section{A logarithmic bound on the number of round-trips}\label{sec:cycles}

Recall the definition of $N_w(T)$ in Section~\ref{sec:comparison}, i.e. the  number of round-trips of $w$ starting from $x$ in $[0,T]$. We will prove that, if the driving function $e$ in \eqref{eq:deterministic-regulator} has the law of a Brownian motion $B$, then as $x\downarrow 0$, the number of round-trips $N_w(T)$ admits a logarithmic upper bound with high probability. 
Let $N_{s,H}$ be the number of round-trips of $w$ in the time interval $[s,s+H]$. The precise statement is as follows.

\begin{proposition}\label{prop:round-trip-count}
Fix $0\le S,H<\infty$ and suppose that $b$ satisfies \eqref{eq:PB}. 
For deterministic $0\le s\le S$ and $r>0$, let $\bP_{s,r}$ be the law of $W^{s,r} = (W^{s,r}_t:t\ge s)$ defined by setting $W^{s,r}_{s+t} = b(s) + W_t$ where $W$ is the solution of \eqref{eq:regulator} with $(x,B,b)$ replaced by $(r,B_{\cdot+s}-B_s,b(\cdot+s)-b(s))$ there. 
 Then for every $A>\bar\nu$,
\begin{equation}\label{eq:round-trip-probability}
 \lim_{\eta\downarrow 0} \sup_{0\le s\le S,\, 0<r\le \eta}\bP_{s,r}(N_{s,H}>A\log(1/r)) = 0.
\end{equation} 
\end{proposition}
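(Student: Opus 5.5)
We will follow the strategy of \cite[Lemma 4]{ChaumontDoney} and count round-trips through the evolution of the gap $F=M(W)-b$. A round-trip starts on the maximum face, reaches the boundary face, and returns to the maximum face. Under \eqref{eq:PB}, $F$ behaves like the reflecting process in the fixed-boundary case, up to $o(\sqrt h)$ errors on the natural time scale $F^2$.

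The key claim concerns the change of $F$ over one round-trip. Suppose a round-trip starts at a time $t_1$ on the maximum face with gap $F_{t_1}=a$. We claim that, conditionally on $\rF_{t_1}$, the gap $F_{t_3}$ at its completion is at least $a$ times a factor $G$. The law of $G$ should be close, uniformly in small $a$, to the fixed-boundary multiplicative factor. To see this, note that between $t_2$ and $t_3$ the process travels from $b$ back to $M$. During this excursion $B$ must rise by at least roughly $a$, and $M$ then increases by $\nu^*$ times the further overshoot.

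In the case $b\equiv0$, Brownian scaling shows that the successive ratios $F_{t_3^{(n)}}/F_{t_1^{(n)}}$ are i.i.d. Their mean logarithm can be computed from \eqref{eq:max-phase}. Started from the maximum face with gap $a$, the maximum grows until $W$ drops by $F$. By \eqref{eq:max-phase}, $F$ grows at rate $\bar\nu^{-1}$ in the running supremum of $e$, while the drop needed is the current $F$. The gap at the moment of hitting the boundary is therefore $a\exp(\mathcal E/\bar\nu)$ in distribution, with $\mathcal E$ a standard exponential: the gap as a function of the supremum $u$ is $a+u/\bar\nu$, and the drawdown hits level $F$ at rate $du/F$.

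Hence $\log F$ increases by an exponential of mean $1/\bar\nu$ per cycle, and the boundary phase does not decrease $F$ when $b$ is constant. Reaching $F$ of order $1$, after which only $O(1)$ round-trips fit in $[s,s+H]$ with high probability, from $F=r$ takes about $\bar\nu\log(1/r)$ cycles. Any $A>\bar\nu$ then works by the law of large numbers, or more precisely by a Cramér/Chernoff bound for sums of exponentials. These bounds give $\bP(\sum_{n\le A\log(1/r)}\mathcal E_n/\bar\nu<\log(1/r)+C)\to0$.

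The concrete steps are as follows.

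\textbf{Step 1.} Using \eqref{eq:PB} and Proposition~\ref{prop:corner-power}, we show that for small $r$ the gap $F$ does not fall below $r^{1+\delta}$ on $[s,s+H]$ with high probability. This follows from \eqref{eq:corner-probability} combined with a union over dyadic restarts.

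\textbf{Step 2.} On each maximum phase we use \eqref{eq:max-phase} with $b$ replaced by the actual boundary. We then check that the decrease of $F$ due to the increase of $b$ over the phase duration is $o(F)$, uniformly over $s\le S$, except on an event of small probability. The duration of a phase started at gap $a$ is of order $a^2$ with exponential tails in $a^{-2}\cdot$duration, so \eqref{eq:PB} gives $\omega^+_{b,S+H}(Ka^2)=o(a)$. This yields the comparison $\log F_{t_3^{(n)}}\ge\log F_{t_1^{(n)}}+(1-\epsilon)\mathcal E_n/\bar\nu-\epsilon$ with i.i.d.-dominated $\mathcal E_n$. During the boundary phase $M$ is constant, and the increase of $b$ can again only lower $F$ by $o(F)$ on the time scale $F^2$.

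\textbf{Step 3.} Once $F\ge c$ for a small fixed constant $c$, every further round-trip forces $B$ to oscillate by at least about $c$. By the modulus of continuity of $B$ on $[0,S+H]$, only $O(1)$ such round-trips fit, with high probability.

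\textbf{Step 4.} Combining Steps 1–3 with a Chernoff bound and choosing $\epsilon$ so that $(1-\epsilon)A/\bar\nu>1$ yields \eqref{eq:round-trip-probability}. Uniformity in $s\le S$ comes from using the modulus $\omega^+_{b,S+H}$.

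We expect Step 2 to be the main obstacle. The error from $b$ must be controlled uniformly over up to $A\log(1/r)$ cycles, since a small loss per cycle could accumulate. The plan is to make each per-cycle failure probability $o(1/\log(1/r))$ by choosing the cutoff $K$ of order $\log\log(1/r)$. Then $\omega^+(Ka^2)\le \epsilon a$ must hold for all $a\ge r^{1+\delta}$, and this needs the $o(\sqrt h)$ rate of \eqref{eq:PB} at scale $h\approx Ka^2$. If this is too weak, we will instead absorb the $\log\log$ loss into the choice of $\epsilon$.
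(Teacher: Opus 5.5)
Your heuristic matches the paper's. In the fixed-boundary picture each cycle multiplies the gap by a Pareto factor $R$ with $\bP(R>x)=x^{-\bar\nu}$. So $\log F$ gains an exponential of mean $1/\bar\nu$ per cycle, and about $\bar\nu\log(1/r)$ cycles are needed.

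\textbf{The gap is in Steps 2 and 4.} A Chernoff bound on i.i.d.\ dominators needs the Step 2 comparison on all $\approx A\log(1/r)$ cycles at once. That means a per-cycle failure probability $o(1/\log(1/r))$, hence a cutoff $K=K(r)\to\infty$, and $\omega^+_{b,S+H}(Ka^2)\le\epsilon a$ for every visited gap $a$. This cannot be arranged under \eqref{eq:PB}:
\begin{itemize}
\item The visited gaps go up to your fixed level $c$. For $a$ of order $c$ the window $Ka^2$ tends to infinity, so the inequality fails as soon as $b$ has an upward increment larger than $\epsilon c$ on $[0,S+H]$.
\item At small scales you would need a rate in \eqref{eq:PB}, which the hypothesis does not provide.
\item Your exponential-tail claim is false. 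A maximum phase started at gap $a$ lasts a time of order $(aR)^2$, so $a^{-2}\times$duration has tails no lighter than $y^{-\bar\nu/2}$. The cutoff $K\approx\log\log(1/r)$ therefore leaves a per-cycle failure probability of order $(\log\log(1/r))^{-\bar\nu/2}$.
\end{itemize}
Absorbing the loss into $\epsilon$ does not help either, because nothing in your plan bounds how far $\log F$ can drop on a bad cycle. Step 1 controls only the running minimum of $\log F$. What governs the number of cycles is the sum of the negative increments, and Step 1 says nothing about it.

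\textbf{The missing idea} is to tolerate a small, fixed fraction of bad cycles and control their cost.
\begin{itemize}
\item Applied at the start of every cycle, Proposition~\ref{prop:corner-power} gives $\bP(\log(F_{\mathrm{end}}/F_{\mathrm{start}})<-u\mid\rF_{\mathrm{start}})\le 16e^{-2u}$, whatever $b$ does. So the increments truncated at $L$ have uniformly bounded conditional second moments.
\item The bad event is that $F$ leaves $(e^{-J}f,e^{L+J}f)$ or the cycle lasts longer than $Qf^2$. Its probability $p$ is small uniformly in small $f$, though it decays at no particular rate. By Cauchy--Schwarz it lowers the conditional mean only by $O(\sqrt p)$.
\item On the good event, freeze the boundary at $b(s)+\omega^+_{b,T}(Qf^2)$ and compare with the fixed-boundary Pareto factor.
\item This gives a conditional drift of at least some $\mu>1/A$ for every cycle started below a small gap $\delta$. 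A martingale second-moment (Chebyshev) bound then replaces your Chernoff bound.
\end{itemize}

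The same proposition is also needed to make Step 3 valid. After $F$ first reaches $\delta$, you must show that $F$ stays above a fixed $\eta>0$ on $[s,s+H]$ with high probability. Each descent of $F$ from $\rho_T$ to a lower fixed level forces $b$ to rise by a fixed amount, so there are only $O(1)$ such descents. Proposition~\ref{prop:corner-power} bounds the chance that any of them goes below $\eta$. Only then does each further round-trip force an oscillation of $B$ of fixed size.
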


The remainder of this section is devoted to the proof of Proposition~\ref{prop:round-trip-count}.
Fix a deterministic horizon $T>0$ and an \emph{outer cap}
\begin{equation}\label{eq:outer-cap}
 0<\bar\rho<\rho_{T},
\end{equation}
where $\rho_{T}$ is supplied by Proposition~\ref{prop:corner-power}. 
Consider the process $W=W^{s,f}$ in Proposition~\ref{prop:round-trip-count} starting at time $s$ and position $b(s)+f$ and extend $W$ by setting $W_u=b(s)+f$ for $u\in[0,s]$, so $F_s=M_s(W)-b(s)=f<\bar\rho$. Recall $\tau_a:=\inf\{t>\sigma:F_t=a\}$ in \eqref{eq:hitting-F} where we take $\sigma\equiv s$. We define, under $\bP_{s,f}$, 
\begin{align}
\sigma^\downarrow &:= \inf\{t>s: W_t=b(t)\} \label{eq:sigma_sf}\\
\sigma^\uparrow & := \tau_{\bar\rho} \wedge \inf\{t>\sigma^\downarrow:W_t = M_{\sigma^\downarrow}(W)\}, \label{eq:tau_sf}\\
\widehat\sigma^\uparrow &:= \sigma^\uparrow\wedge T,\qquad \xi^L := \begin{cases} \log(F_{\sigma^\uparrow}/f)\wedge L, & \sigma^\uparrow\le T,\\
L, &\sigma^\uparrow> T.
\end{cases}\label{eq:tau-xi-truncated}
\end{align}
We call the trajectory of $W$ in $[s,\sigma^\uparrow]$ a \emph{cycle with an outer cap} $\bar\rho$, which is either a round-trip of $W$ or a path stopped at $\bar\rho$.

\begin{lemma}\label{lem:long-bad-cycle}
Fix $L>0$ and let $\widehat\sigma^\uparrow$ and $\xi^L$ be defined in \eqref{eq:tau-xi-truncated}. Then 
\begin{equation}\label{eq:long-bad-cycle}
 \lim_{Q\to\infty}\limsup_{\eta\downarrow0} \sup_{0\le s\le T,\, 0<f\le\eta} \bP_{s,f}(\widehat\sigma^\uparrow-s> Qf^2,\ \xi^L<L)=0.
\end{equation}
\end{lemma}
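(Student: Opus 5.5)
The idea is a Brownian scaling argument at scale $f$. The cycle lasts longer than $Qf^2$ while $\xi^L<L$ only if, throughout the window $[s,s+Qf^2]$, $F$ stays below $e^Lf$ and the path never completes its return to the maximum face. Over each block of length $f^2$ the Brownian driver has a fixed positive probability of making a large excursion that forces a completion. Iterating over $\lfloor Q\rfloor$ blocks with the strong Markov property then gives a bound of the form $(1-p)^{Q}$, uniformly in $s\le T$ and small $f$.

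\textbf{Step 1 (the constraint on $F$).} On the event $\{\widehat\sigma^\uparrow-s>Qf^2,\ \xi^L<L\}$ we have $\sigma^\uparrow>s+Qf^2$ and $\sigma^\uparrow\le T$ (otherwise $\xi^L=L$). We also have $F_{\sigma^\uparrow}<e^Lf$. Since $M(W)$ is constant on $[\sigma^\downarrow,\sigma^\uparrow]$ up to the cap time, and $F=M(W)-b$, I will bound $\sup_{[s,\sigma^\uparrow]}F$ by $e^Lf$ plus a boundary fluctuation.

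On $[s,\sigma^\downarrow]$, $F$ can only grow through $M(W)$. Any overshoot of $M(W)$ above $b(\sigma^\uparrow)+e^Lf$ cannot be undone, except by an increase of $b$ of size $\omega^+_{b,T+1}(\cdot)$.

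The window length $Qf^2$ is tiny, so by \eqref{eq:PB} we have $\omega^+_{b,T+1}(Qf^2)\le f$ for $f\le\eta(Q)$. This is the same device as in Lemma~\ref{lem:corner-annulus}. Hence $F\le (e^L+1)f$ and $Y\le F\le (e^L+1)f$ on $[s,s+Qf^2]$. Also $b$ rises by at most $f$ across any block.

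\textbf{Step 2 (one block forces a completion).} Set $s_j=s+jf^2$ and $c:=(2\bar\nu+3)(e^L+1)$, and consider the Brownian event $G_j:=\{B_{s_{j+1}}-B_{s_j}\ge c f\}$.

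I will show, as in Lemma~\ref{lem:corner-annulus}, that on $G_j$ the window cannot stay in the "incomplete cycle with $F<(e^L+1)f$" regime through $s_{j+1}$. I would check the two cases in this order.

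\emph{Case 1: before $\sigma^\downarrow$.} The increment $\bar\nu(M_{s_{j+1}}-M_{s_j})\ge (B_{s_{j+1}}-B_{s_j})-Y_{s_j}$ together with the small boundary rise gives $F_{s_{j+1}}>(e^L+1)f$. This contradicts Step 1.

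\emph{Case 2: after $\sigma^\downarrow$.} A rise of $W$ by more than $Y_{s_j}$ forces $W$ back to $M_{\sigma^\downarrow}(W)$, i.e. $\sigma^\uparrow\le s_{j+1}$. Here I use $B_t+K_t=\bar\nu(M_t-x)-Y_t$ with $K$ increasing, so an increase of $B$ decreases $Y$ by at least that amount until $Y$ hits $0$.

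So $E_{j+1}\subseteq E_j\cap G_j^c$, where $E_j$ is the $\rF_{s_j}$-measurable event that the bad scenario persists to $s_j$. By the strong Markov property and scaling, $\bP(G_j\mid\rF_{s_j})=p:=\bP(N(0,1)\ge c)>0$, independent of $s,f$.

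\textbf{Step 3 (conclusion).} Iterating gives $\sup_{s\le T,\,f\le\eta(Q)}\bP_{s,f}(\cdot)\le(1-p)^{\lfloor Q\rfloor}$. Letting $\eta\downarrow0$ and then $Q\to\infty$ yields \eqref{eq:long-bad-cycle}.

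\textbf{Main obstacle.} The delicate point is Step 1: obtaining the uniform a priori bound on $F$ (and hence on $Y$) over the whole window. The event only controls $F$ at the terminal time $\sigma^\uparrow$, not along the path. I expect to handle this by monotonicity of $M(W)$ combined with \eqref{eq:PB}, which bounds how far $b$ can climb. If that fails, a fallback is to intersect with $\{\sup F\le \bar\rho\}$, which holds by the outer cap, and use a cruder constant $c$ depending on $\bar\rho/f$. This would be worse, so the monotonicity route should be tried first.

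A secondary check is that the cap time $\tau_{\bar\rho}$ does not interfere. Since $e^Lf<\bar\rho$ for small $f$, the cap is never reached on the event in question.
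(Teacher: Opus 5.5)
There is a genuine gap in Step 1, and your fallback does not close it. On $\{\widehat\sigma^\uparrow-s>Qf^2,\ \xi^L<L\}$ you only know $F$ at the terminal time $\sigma^\uparrow$. For $t$ in the window, monotonicity of $M(W)$ gives only $F_t\le F_{\sigma^\uparrow}+(b(\sigma^\uparrow)-b(t))^+<e^Lf+(b(\sigma^\uparrow)-b(t))^+$. But the event says precisely that $\sigma^\uparrow>s+Qf^2$, and $\sigma^\uparrow$ may lie at distance of order $T$ from the window. So this increment is controlled only by $\omega^+_{b,T}(T)$, which is of order one, not by $\omega^+_{b,T+1}(Qf^2)$.

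No deterministic bound $\sup_{[s,s+Qf^2]}F\le C_Lf$ exists. The path can push $M(W)$ so that $F$ reaches $e^{L+J}f$ early. An increasing boundary (think of $b(t)=t$) can then bring $F$ back below $e^Lf$ over a time of order $e^{L+J}f\gg Qf^2$ before the cycle closes. The cap fallback fails as well: with $c\asymp\bar\rho/f$ the block probability $p=\bP(N(0,1)\ge c)$ tends to $0$ as $f\downarrow0$, so $(1-p)^{\lfloor Q\rfloor}$ gives nothing uniform in small $f$.

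The missing ingredient is the corner estimate, Proposition~\ref{prop:corner-power}, used with an auxiliary parameter $J$ that is sent to infinity last. Put $f_+=e^{L+J}f<\bar\rho$. Suppose $F$ reaches $f_+$ before $\widehat\sigma^\uparrow$ while $\xi^L<L$. Then $\sigma^\uparrow\le T$ and the cycle did not end at the cap, so $F$ must travel from $f_+$ down to $e^Lf$ before $\tau_{\bar\rho}$ and before $T$. Applying \eqref{eq:corner-probability} at $\tau_{f_+}\wedge T$ bounds this probability by $(4e^Lf/f_+)^2=16e^{-2J}$, uniformly in $s$ and $f$.

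On the complementary event, $F<f_+$ on the whole cycle. There your block argument from Step 2 is sound with $c=c(L,J)$, since both of your cases only use $Y\le F<f_+$ and $\omega^+_{b,T+1}(f^2)=o(f)$ on each block; it yields $(1-p_{L,J})^{\lfloor Q\rfloor}$. This is a legitimate substitute for the paper's treatment of that event, which dominates the two phases by $T_{M(B)-B}(f_+)$ and $T_B(f_+)$ and then uses L\'evy's identity and scaling. You conclude by letting $\eta\downarrow0$, then $Q\to\infty$, which leaves $16e^{-2J}$, and finally $J\to\infty$.
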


\begin{proof}
Choose \(J>\max\{1,-L\}\) and put
\[
f_-:=e^{-J}f,\qquad f_L:=e^L f,\qquad f_+:=e^{L+J} f.
\]
Choose \(\eta_J>0\) sufficiently small so that \(f_+<\bar\rho\) for every $0<f\le \eta_J$. Define the event
\[
E_{J,f} := \left\{ f_-<F_t<f_+ \text{ for every }t\in[s,\widehat \sigma^\uparrow] \right\}.
\]
We first control the duration of the cycle on the event \(E_{J,f}\). On this event, the outer cap is not reached since \(f_+<\bar\rho.\)
Recall $Y:=M(W)-W$ in \eqref{eq:F-Y}. Then $Y_s=0$ and $Y_{t}\le F_t<f_+$ for $s\le t\le \widehat\sigma^\uparrow$  on the event \(E_{J,f}\). From \eqref{eq:max-phase}, we have
\begin{align}\label{eq:E_Jf-before-sigma}
Y_{s+u} =  \sup_{0\le v\le u}(B_{s+v}-B_s) - (B_{s+u}-B_s),\qquad 0\le u\le (\sigma^\downarrow\wedge T)-s.
\end{align}
If furthermore $\sigma^\downarrow<T$, then on $[\sigma^\downarrow,\sigma^\uparrow\wedge T]$, since $M$ remains constant while $K$ is increasing, \eqref{eq:regulator} implies
\begin{align}\label{eq:E_Jf-after-sigma}
W_{\sigma^\downarrow+u} \ge b(\sigma^\downarrow)+(B_{\sigma^\downarrow+u}-B_{\sigma^\downarrow}), \qquad 0\le u\le \widehat\sigma^\uparrow-\sigma^\downarrow,
\end{align}
with equality at $u=0$. 
For a continuous process $Z$ starting at 0, write $T_Z(x):=\inf\{t>0:Z_t=x\}$. Recall that $M(Z)$ is the running supremum process of $Z$. We deduce that, uniformly in $0\le s\le T$, $f\le \eta_J$,
\begin{align*}
\bP_{s,f}(\widehat\sigma^\uparrow - s>Qf^2, E_{J,f}) &\le \bP_{s,f}(\sigma^\downarrow\wedge T - s>\tfrac12 Qf^2, E_{J,f}) + \bP_{s,f}(\sigma^\uparrow\wedge T - \sigma^\downarrow\wedge T>\tfrac12 Qf^2, E_{J,f})\\
&\le \bP(T_{M(B)-B}(f_+)\ge \tfrac12 Qf^2) + \bP(T_B(f_+)\ge \tfrac12 Qf^2)\\
&\le \bP\left(T_{|B|}(1)\ge \frac{Q}{2e^{2(L+J)}}\right) + \bP\left(T_{B}(1)\ge \frac{Q}{2e^{2(L+J)}}\right)\to 0 \quad \text{as}~ Q\to\infty,
\end{align*}
where the second inequality follows by the strong Markov property of $B$ and \eqref{eq:E_Jf-before-sigma}--\eqref{eq:E_Jf-after-sigma}, and the last one by  L\'evy's identity and the scaling property of $B$.

It remains to control the complement of $E_{J,f}$ on $\{\xi^L<L\}$.  Recall $\tau_a=\inf\{t> s:F_t=a\}$. Proposition~\ref{prop:corner-power} gives
\[
\bP_{s,f}(\tau_{f_-}\le \widehat\sigma^\uparrow)\le \left(\frac{4f_-}{f}\right)^2 = 16 e^{-2J}.
\]
Now consider the event $A_+:=\{\tau_{f_+}<\widehat\sigma^\uparrow,\ \xi^L<L\}.$ Then $\widehat\sigma^\uparrow = \sigma^\uparrow\le T$ and $F_{\sigma^\uparrow}<e^L f = f_L$ on this event. Since \(f_+<\bar\rho\), we have \( \log(\bar\rho/f)>L+J>L.\) Recalling \eqref{eq:tau-xi-truncated}, we deduce that on the event $A_+$, the capped cycle cannot terminate by reaching the outer cap $\bar\rho$. 
On the other hand, \(F_{\tau_{f_+}}=f_+=e^{L+J}f>f_L\) if $\tau_{f_+}<\infty$. Therefore on the event \(A_+\), the process $F$ reaches \(f_L\) at some time after \(\tau_{f_+}\) and before \(\widehat\sigma^\uparrow\wedge \tau_{\bar\rho}\). Applying Proposition~\ref{prop:corner-power} conditionally at
\(\tau_{f_+}\wedge T\) gives
\[
\bP_{s,f}(A_+\mid\mathscr F_{\tau_{f_+}}) = \bP_{s,f}(A_+\mid\mathscr F_{\tau_{f_+}\wedge T})\ind_{\{\tau_{f_+}<T,\, F_{\tau_{f_+}}<\bar\rho\}}
\le \left(\frac{4f_L}{f_+}\right)^2 = 16 e^{-2J}.
\]
Therefore, uniformly in $0\le s\le T$ and $0<f\le \eta_J$,
\begin{align*}
\bP_{s,f}\left(\widehat\sigma^\uparrow-s>Qf^2,\, \xi^L<L \right)
&\le \bP_{s,f}\left(\widehat\sigma^\uparrow-s>Qf^2,\ E_{J,f}\right) + \bP_{s,f}(\tau_{f_-}\le \widehat\sigma^\uparrow) + \bP_{s,f}(A_+)\\
&\le \bP_{s,f}\left(\widehat\sigma^\uparrow-s>Qf^2,\ E_{J,f}\right) +32 e^{-2J},
\end{align*}
and thus, for every fixed \(J>\max\{1,-L\}\),
\[
\lim_{Q\to\infty} \limsup_{\eta\downarrow 0} \sup_{0\le s\le T,\, 0<f\le\eta} \bP_{s,f}\left(\widehat\sigma^\uparrow-s>Qf^2,\ \xi^L<L \right) \le 32 e^{-2J}.
\]
Letting \(J\to\infty\) proves \eqref{eq:long-bad-cycle}.
\end{proof}

Recall the definition of $\omega^+_{b,T}(h)$ in \eqref{eq:positive-modulus}. For fixed $Q>0$, put
\begin{equation}\label{eq:trace-epsilon}
 \varepsilon_f(Q):= \frac{\omega^+_{b,T}(Q f^2)}f.
\end{equation}

\begin{lemma} \label{lem:frozen-trace}
Fix $Q,L>0$. There exists $f_0>0$ sufficiently small, such that for every $0<f<f_0$ and $0\le s< T$, there is a random variable $R$ independent of $\rF_s$ with \(\bP(R>r)=r^{-\bar{\nu}}\) for all $r\ge 1$, such that on the event $\{\widehat\sigma^\uparrow-s\le Qf^2,\, \xi^L<L\}$,
\begin{equation}\label{eq:trace-cycle-domination}
 \frac{F_{\sigma^\uparrow}}f \ge(1-\varepsilon_f(Q))R.
\end{equation}
\end{lemma}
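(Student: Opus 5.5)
The plan is to compare the capped cycle with the corresponding cycle for a \emph{frozen} (constant) boundary, for which the excursion law is explicit, and then to absorb the upward movement of $b$ during the cycle into the factor $1-\varepsilon_f(Q)$.

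\emph{Step 1: the frozen cycle.} Write $\beta_u:=B_{s+u}-B_s$ and $S_u:=\sup_{v\le u}\beta_v$; by the strong Markov property $\beta$ is a Brownian motion independent of $\rF_s$. On $[s,\sigma^\downarrow]$ there is no regulator, so \eqref{eq:max-phase} and \eqref{eq:max-identity} give
\[
W_{s+u}-b(s)=f+\beta_u+\nu^*S_u,\qquad M_{s+u}(W)-M_s(W)=S_u/\bar\nu .
\]
For $g>0$ set $\kappa_g:=\inf\{u:\beta_u+\nu^* S_u=-g\}$. This is the first time the drawdown satisfies $S_u-\beta_u=g+S_u/\bar\nu$. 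By excursion theory for $\beta$ away from its maximum, a drawdown of size $\ge a$ starts at rate $1/a$ per unit of $S$. Hence
\[
\bP(S_{\kappa_g}>y)=\exp\Bigl(-\int_0^y\frac{\dd v}{g+v/\bar\nu}\Bigr)=\Bigl(1+\frac{y}{\bar\nu g}\Bigr)^{-\bar\nu}.
\]
I would take $g:=(1-\varepsilon_f(Q))f$ and define $R:=1+S_{\kappa_g}/(\bar\nu g)$. Then $\bP(R>r)=r^{-\bar\nu}$ for $r\ge1$, and $R$ is independent of $\rF_s$. Choose $f_0$ so small that $\varepsilon_f(Q)<1$ for $f<f_0$; this is possible by \eqref{eq:PB} applied to $\omega^+_{b,T}(Qf^2)/(\sqrt Q f)$. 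The same smallness also gives $e^Lf<\bar\rho$.

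\emph{Step 2: comparison on the good event.} Work on $G:=\{\widehat\sigma^\uparrow-s\le Qf^2,\ \xi^L<L\}$. Since $\xi^L<L$, we have $\sigma^\uparrow\le T$ and $F_{\sigma^\uparrow}<e^Lf<\bar\rho$. So the cycle is not stopped at the outer cap: it is a genuine round-trip, and $\sigma^\downarrow<\sigma^\uparrow$. Moreover every $t\in[s,\sigma^\uparrow]$ satisfies $t-s\le Qf^2$, whence
\[
b(t)-b(s)\le \omega^+_{b,T}(Qf^2)=\varepsilon_f(Q)f .
\]
At $\sigma^\downarrow$ we have $W=b$, that is, $\beta+\nu^*S=b(\sigma^\downarrow)-b(s)-f\le -g$. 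Since $\beta+\nu^*S$ is continuous and starts at $0$, this forces $\sigma^\downarrow-s\ge\kappa_g$, and hence $S_{\sigma^\downarrow-s}\ge S_{\kappa_g}$. On $[\sigma^\downarrow,\sigma^\uparrow]$ the maximum is frozen, so
\[
F_{\sigma^\uparrow}=M_{\sigma^\downarrow}(W)-b(\sigma^\uparrow)=f+\frac{S_{\sigma^\downarrow-s}}{\bar\nu}-(b(\sigma^\uparrow)-b(s))\ge g+\frac{S_{\kappa_g}}{\bar\nu}=gR .
\]
This is exactly \eqref{eq:trace-cycle-domination}.

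\emph{Main obstacle.} I expect the hardest step to be the bookkeeping that makes the comparison legitimate. First, one must check that on $G$ the whole cycle lies in the window where $\omega^+_{b,T}(Qf^2)$ controls the upward increments; this needs $\sigma^\uparrow\le T$, which is why the event $\{\xi^L<L\}$ is used. Second, one must check that $\varepsilon_f(Q)$ does not depend on $s$, so that $R$ is a functional of $\beta$ alone, with deterministic $g$ for fixed $f$. Third, the excursion-theoretic computation of the law of $S_{\kappa_g}$ requires care with the linearly moving threshold; I would carry it out via the standard exponential formula for the Poisson point process of excursions of $S-\beta$ indexed by the local time $S$, following the constant-threshold computation in \cite{ChaumontDoney}.
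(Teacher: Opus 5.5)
Your proposal is correct and follows essentially the paper's proof. Your $\kappa_g$ with $g=(1-\varepsilon_f(Q))f$ is exactly the paper's hitting time $\zeta$ of $0$ by $V=a_0+\beta+\nu^*S$, your $R=1+S_{\kappa_g}/(\bar\nu g)$ coincides with the paper's $M_\zeta(V)/a_0$, and the comparison via $b\le b(s)+\varepsilon_f(Q)f$ on $[s,\sigma^\uparrow]$ with frozen maximum after $\sigma^\downarrow$ is the same argument. The only difference is that you derive the Pareto law $\bP(R>r)=r^{-\bar\nu}$ yourself, from the exponential formula for excursions of $S-\beta$ with the moving threshold $g+v/\bar\nu$; that computation is correct. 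The paper instead cites \cite[Proposition 4(iii)]{PermanWerner}.
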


\begin{proof}
Let $m=M_s=W_s$ so $m=f+b(s)$, and freeze the level
\[
 \ell:=b(s)+\varepsilon_f(Q)f=m-(1-\varepsilon_f(Q))f.
\]
Choose $f>0$ small enough that $\varepsilon_f(Q)<1$ and $f<\bar\rho e^{-L}$. Put $a_0:=(1-\varepsilon_f(Q))f$ and define for $u\ge 0$,
\[
 V_u:=a_0+B_{s+u}-B_s+\nu^*\sup_{0\le v\le u}(B_{s+v}-B_s),
 \qquad \zeta:=\inf\{u\ge0:V_u=0\}.
\]
The process $V$ is independent of $\rF_s$ by the strong Markov property of $B$.
Set $R:= M_\zeta(V)/a_0$ so it is also independent of $\rF_s$. Applying \cite[Proposition 4(iii)]{PermanWerner} (with their parameters $(\alpha,\beta)=(\nu,0)$) to the process $V-a_0$, we deduce that $\bP(R>r)=r^{-\bar{\nu}}$ for all $r\ge 1$.

On the event $\{\widehat\sigma^\uparrow-s\le Qf^2,\, \xi^L<L\}$, we have $\sigma^\uparrow\le T\wedge(s+Qf^2)$ and the cycle cannot end at the outer cap $\bar\rho$. Therefore $W^{s,f}$ completes a full round-trip in $[s,\sigma^\uparrow]$. Before $W$ first hits $\ell$, $K$ remains constant so the maximum-phase formula \eqref{eq:max-phase} yields $W_{s+u}-\ell=V_u$. On the other hand, the definition of $\varepsilon_f(Q)$ implies that $b(t)\le\ell$ for all $s\le t\le\sigma^\uparrow$, and thus $\zeta+s\le \sigma^\uparrow$.  Consequently,
\[
F_{\sigma^\uparrow}=M_{\sigma^\uparrow}(W)-b(\sigma^\uparrow)
 \ge M_{s+\zeta}(W)-\ell=M_\zeta(V)=(1-\varepsilon_f(Q))fR.
\]
It gives \eqref{eq:trace-cycle-domination}.
\end{proof}

\begin{proposition}
\label{prop:cycle-kernel}
Fix $L>0$. Then uniformly over $0\le s\le T$ and
$0<f<\bar\rho$,
\begin{equation}\label{eq:lower-cycle-tail}
 \bP_{s,f}(\xi^L<-u)\le 16 e^{-2u}, \qquad u\ge0.
\end{equation}
Set $m_L:=(1-e^{-\bar\nu L})/\bar{\nu}.$ Then 
\begin{align}
\sup_{0\le s\le T,\,  0<f<\bar\rho} \bE_{s,f}\bigl[(\xi^L)^2\bigr] &<\infty,\label{eq:truncated-second-moment}\\
 \liminf_{f\downarrow0} \inf_{0\le s\le T} \bE_{s,f}\bigl[\xi^L\bigr]
 &\ge m_L.\label{eq:truncated-mean-uniform}
\end{align}
\end{proposition}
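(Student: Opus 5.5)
For the lower tail \eqref{eq:lower-cycle-tail}, I will apply Proposition~\ref{prop:corner-power} with $\sigma\equiv s$ and $\varepsilon=e^{-u}f$. On the event $\{\xi^L<-u\}$ we have $\sigma^\uparrow\le T$ and $F_{\sigma^\uparrow}<e^{-u}f$, so $F$ must hit $e^{-u}f$ before $\sigma^\uparrow$. Since $\sigma^\uparrow\le\tau_{\bar\rho}$ and $\bar\rho<\rho_T$, this hitting happens before $\tau_{\rho_T}$ and before $T$. Hence
\[
\bP_{s,f}(\xi^L<-u)\le\bP(\tau_{e^{-u}f}<\tau_{\rho_T},\ \tau_{e^{-u}f}\le T)\le 16e^{-2u}.
\]
The degenerate case $e^{-u}f\ge f/4$ is trivial. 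Throughout, $\xi^L\le L$ holds by definition.

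For the second moment \eqref{eq:truncated-second-moment}, I split into positive and negative parts. We have $(\xi^L)^2\le L^2+(\xi^L)^2\ind_{\{\xi^L<0\}}$. Then
\[
\bE[(\xi^L)^2\ind_{\{\xi^L<0\}}]=\int_0^\infty 2u\,\bP(\xi^L<-u)\,\dd u\le\int_0^\infty 32u e^{-2u}\,\dd u<\infty,
\]
uniformly in $s$ and $f$. This is just the preceding tail bound integrated.

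The mean bound \eqref{eq:truncated-mean-uniform} is the main step. I fix $Q$ and let $G=\{\widehat\sigma^\uparrow-s\le Qf^2,\ \xi^L<L\}$. I then split
\[
\bE\xi^L=L\,\bP(\xi^L=L)+\bE[\xi^L;G]+\bE[\xi^L;\xi^L<L,\ G^c].
\]
On $G$, Lemma~\ref{lem:frozen-trace} gives $\xi^L\ge\log(1-\varepsilon_f(Q))+\log R$, and I combine this with $\xi^L\le L$. The aim is to compare $\xi^L$ with $\min\{\log R+\log(1-\varepsilon_f),L\}$ wherever possible. Pointwise, on $\{\xi^L=L\}$ we have $\xi^L=L\ge\min(\log R+c,L)$ with $c=\log(1-\varepsilon_f(Q))\le0$. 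On $G$ the same inequality holds by Lemma~\ref{lem:frozen-trace}. On the bad set $B_Q=\{\xi^L<L\}\setminus G$, I bound $\xi^L-\min(\log R+c,L)\ge -(\xi^L)^- -L$. This gives
\[
\bE\xi^L\ge\bE\min(\log R+c,L)-\bE\bigl[((\xi^L)^-+L)\ind_{B_Q}\bigr].
\]
By Cauchy--Schwarz and \eqref{eq:truncated-second-moment}, the error is at most $C\,\bP(B_Q)^{1/2}$. By Lemma~\ref{lem:long-bad-cycle}, $\limsup_{f\downarrow0}\sup_s\bP(B_Q)\to0$ as $Q\to\infty$. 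Moreover $\varepsilon_f(Q)\to0$ as $f\downarrow0$ by \eqref{eq:PB}, so $c\to0$ uniformly in $s$.

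It remains to compute the limit mean. Since $\log R\sim\mathrm{Exp}(\bar\nu)$,
\[
\bE\min(\log R,L)=\int_0^L e^{-\bar\nu u}\,\dd u=m_L.
\]
Taking $f\downarrow0$ and then $Q\to\infty$ gives \eqref{eq:truncated-mean-uniform}. The obstacle I expect is justifying that $R$ from Lemma~\ref{lem:frozen-trace} can be coupled with the trajectory under $\bP_{s,f}$ on a single probability space; $R$ is constructed from $B$ after time $s$, so this is fine. I also need $f<f_0$, where $f_0$ depends on $Q$, which is why the limits are taken in the order $f\downarrow0$ first and then $Q\to\infty$.
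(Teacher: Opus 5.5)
Your proposal is correct and follows the paper's proof essentially step for step. The lower tail comes from Proposition~\ref{prop:corner-power} with $\varepsilon=e^{-u}f$, the second moment from integrating that tail, and the mean bound from Lemma~\ref{lem:frozen-trace} on the good event plus Cauchy--Schwarz on the bad event of Lemma~\ref{lem:long-bad-cycle}, taking $f\downarrow0$ and then $Q\to\infty$. The only cosmetic difference is that you bound the error on the bad set by $(\xi^L)^-+L$ and use $\min(x+c,L)\ge\min(x,L)+c$ for $c\le0$, where the paper uses $|H^L|+|\xi^L|$ together with $\varepsilon_f(Q)\le 1/2$.
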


\begin{proof}
If $\xi^L<-u$, then $\sigma^\uparrow\le T$ and $F$ reaches $fe^{-u}$ before reaching $\bar\rho$ and before time $T$. Equation \eqref{eq:lower-cycle-tail} then follows by Proposition~\ref{prop:corner-power}. It follows that
\[
 \sup_{s\le T,\, 0<f<\bar\rho}\bE_{s,f}[((\xi^L)^-)^2]
 \le 32\int_0^\infty u e^{-2u}\dd u<\infty.
\]
Together with $(\xi^L)^+\le L$, this proves \eqref{eq:truncated-second-moment}.
Fix $Q>0$ and recall $\varepsilon_f(Q)$ in \eqref{eq:trace-epsilon}. By Lemma~\ref{lem:frozen-trace}, we have for all $f$ small enough, on the event $\{\sigma^\uparrow-s\le Qf^2,\, \xi^L<L\}$,
\[
 \xi^L\ge H^L:=\bigl(\log(1-\varepsilon_f(Q))+\log R\bigr)\wedge L.
\]
The same inequality is automatic on $\{\xi^L= L\}$.  Thus it can fail only on the event appearing in \eqref{eq:long-bad-cycle}. 
Therefore,
\[
 \xi^L\ge H^L - (|H^L|+|\xi^L|)\ind_{\{\sigma^\uparrow-s> Qf^2,\, \xi^L<L\}}.
\]
For every fixed $Q>0$, we can take $f>0$ sufficiently small such that $\varepsilon_f(Q)\le 1/2$, so that $-\log 2\le H^L \le L$.
From the uniform second-moment bound \eqref{eq:truncated-second-moment} and Cauchy--Schwarz, we deduce that
\[
 \bE_{s,f}\bigl[\xi^L\bigr] \ge \bE_{s,f} \bigl[H^L\bigr] - C_L \left[\sup_{0\le s\le T}\bP_{s,f}(\sigma^\uparrow-s> Qf^2,\, \xi^L<L)\right]^{\frac12}
\]
for $f>0$ sufficiently small and some constant $C_L>0$ independent of $s,f,Q$. We have $\varepsilon_f(Q)\to 0$ as $f\downarrow 0$ for fixed $Q>0$. Therefore, sending first $f\downarrow 0$ uniformly in $0\le s\le T$ and then $Q\to \infty$, Lemma~\ref{lem:long-bad-cycle} yields 
\[
\liminf_{f\downarrow 0} \inf_{0\le s\le T} \bE_{s,f}\bigl[\xi^L\bigr] \ge \bE[(\log R)\wedge L] = \int_0^L e^{-\bar\nu u}\dd u=\frac{1-e^{-\bar{\nu}L}}{\bar{\nu}}.
\]
This is \eqref{eq:truncated-mean-uniform}.
\end{proof}

\bigskip

We now describe a continuation of the solution from the full state $(f,y)$ at an $\rF$-stopping time $\sigma$.  Its historical maximum is $m:=b(\sigma)+f$ and its position is $m-y$, where $f>0$ and $0\le y\le f$. The maximum used after restart is $M^{(\sigma)}_t:=\max\{m,\sup_{\sigma\le u\le t}W_u\}$, and the increment equation is
\[
 W_{\sigma+t}=m-y+B_{\sigma+t}-B_\sigma
       +\nu(M^{(\sigma)}_{\sigma+t}-m)+(K_{\sigma+t}-K_\sigma).
\]
In particular, the process initially evolves freely in
the interior until hitting $b(\sigma+\cdot)$ or the upper level $m$.
Put
\[
 B^{(\sigma)}_t:=B_{\sigma+t}-B_\sigma, \qquad \beta_\sigma(t):=b(\sigma+t)-b(\sigma).
\]
Given $F_\sigma=f>0$ and $Y_\sigma=y\in[0,f]$, we set
\begin{equation}\label{eq:interior-free-phase}
 Y^{(\sigma),f,y}_{\sigma+t}=y-B^{(\sigma)}_t,\quad
 F^{(\sigma),f,y}_{\sigma+t}=f-\beta_\sigma(t),\quad
 W^{(\sigma),f,y}_{\sigma+t}=b(\sigma)+(f-y)+B^{(\sigma)}_t, \quad 0\le t\le t_0,
\end{equation}
where $t_0:=\inf\{t>0: W^{(\sigma),f,y}_{\sigma+t} -b(\sigma) \in\{ \beta_\sigma(t),f\}\}$. After $t_0$, we continue constructing $W^{(\sigma),f,y}$ by alternately using explicit formulas \eqref{eq:max-phase}--\eqref{eq:boundary-phase} in the maximum-face (in which case $W$ evolves as a perturbed Brownian motion) and the boundary-face (using Skorokhod's map). This construction gives a unique continuation of the solution, and will continue until time $T$ thanks to Proposition~\ref{prop:corner-power}.

\begin{proposition}\label{prop:positive-width-tightness}
Fix $T,f>0$. Let $N^{(\sigma),f,y}_T$ be the number of round-trips of $W^{(\sigma),f,y}$ on $[\sigma,T]$. Then for any $\varepsilon>0$, there exists some $N_\varepsilon\in\bN$ such that
\[
\bP\left(N^{(\sigma),f,y}_T> N_\varepsilon\mid\rF_\sigma\right) \le \varepsilon
\]
for all $\rF$-stopping times $\sigma\le T$ and $0\le y\le f$.
\end{proposition}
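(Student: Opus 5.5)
The plan is to show that, once $F$ is bounded below by a fixed positive level, each round-trip consumes a non-negligible amount of time with probability bounded away from zero. The first step uses Proposition~\ref{prop:corner-power} to keep $F$ away from the corner. Fix $\varepsilon>0$. Since $F_\sigma=f>0$, one application of \eqref{eq:corner-probability} (after first waiting until $F$ drops below $\rho_T$; above that level there is nothing to prove) gives a level $\delta=\delta(\varepsilon,f,T)\in(0,f\wedge\rho_T)$ with
\[
\bP\Bigl(\inf_{\sigma\le t\le T}F_t<\delta\,\Big|\,\rF_\sigma\Bigr)\le \varepsilon/2 .
\]
The bound is uniform in $\sigma\le T$ and $y\in[0,f]$. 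The reason is that at a later time $\sigma'$, when $F$ first goes below $\min(f,\rho_T)/2$ (a stopping time), we have $F_{\sigma'}$ equal to that fixed level, so \eqref{eq:corner-probability} yields the bound $(8\delta/\min(f,\rho_T))^2$. The $y$ variable plays no role in this estimate.

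The second step shows that each round-trip takes time with probability bounded below. On the event $G:=\{F_t\ge\delta \text{ on }[\sigma,T]\}$, every completed round-trip $t_1<t_2<t_3$ forces $W$ to travel from $M_{t_1}$ down to $b(t_2)\le M_{t_2}-\delta$ and back. Along the way $M$ and $K$ act only monotonically. Concretely, on $[t_1,t_2]$ we have $Y_{t_2}=F_{t_2}\ge\delta$, and by \eqref{eq:max-phase} $Y$ is the reflected process $\sup B-B$ restricted to that stretch. The time needed for $Y$ to reach $\delta$ after a stopping time at which $Y=0$ (the start of a round-trip) stochastically dominates $T_{|B|}(\delta)$, which is independent of the past by the strong Markov property. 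I define recursively the stopping times $\gamma_k$ at which $W$ returns to its maximum after having visited the boundary face. Let $\kappa_k$ be the time after $\gamma_k$ until $M-W$ first reaches $\delta$. On $G$, the $(k+1)$st round-trip ends after $\gamma_k+\kappa_k$. Moreover $\kappa_k\ge T_{|B^{(\gamma_k)}|}(\delta)$, because while $M$ is not growing $Y$ moves like $-B$, and while $M$ grows $Y$ is reset to zero, which only delays the hitting. Both facts come from the explicit formula \eqref{eq:max-phase}.

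The third step is the counting. Put $h:=\delta^2$ and $p:=\bP(T_{|B|}(1)\ge 1)>0$. The number of round-trips completed on $G$ in $[\sigma,T]$ is at most the number of indices $k$ with $\sum_{j<k}\kappa_j\le T$. Conditionally on $\rF_{\gamma_j}$, each $\kappa_j$ exceeds $h$ with probability at least $p$. Therefore $\#\{j<n:\kappa_j\ge h\}$ stochastically dominates a $\mathrm{Bin}(n,p)$ variable, via a standard martingale or coupling argument. Having more than $n$ round-trips forces fewer than $T/h$ long cycles among the first $n$. Choose $N_\varepsilon$ with $\bP(\mathrm{Bin}(N_\varepsilon,p)<T/h+1)\le\varepsilon/2$. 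Combining this with the first step gives the claim, uniformly in $\sigma$ and $y$, since $\delta$ and $p$ depend only on $(\varepsilon,f,T)$.

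The main obstacle I expect is the domination $\kappa_k\ge T_{|B^{(\gamma_k)}|}(\delta)$ together with the bookkeeping of the stopping times $\gamma_k$. The first possibly incomplete phase starting from state $(f,y)$ via \eqref{eq:interior-free-phase} also needs care; it contributes at most one extra round-trip. The uniformity in the first step likewise needs care, because $F_\sigma=f$ may exceed $\rho_T$. That case is handled by restarting at the first time $F$ falls to a fixed level below $\rho_T$.
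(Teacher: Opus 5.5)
Your first step has a genuine gap. Proposition~\ref{prop:corner-power} only controls $\bP(\tau_\delta<\tau_{\rho_T},\ \tau_\delta\le T\mid\rF_{\sigma'})$, i.e.\ the probability that $F$ reaches $\delta$ \emph{before} climbing back to $\rho_T$. The event $\{\inf_{\sigma\le t\le T}F_t<\delta\}$ also contains paths on which $F$ returns to $\rho_T$ and later comes down again, possibly many times before $T$. So the claim that ``above that level there is nothing to prove'' is wrong, and a single application of \eqref{eq:corner-probability} at $\sigma'$ does not give $(8\delta/\min(f,\rho_T))^2$. You need one application per descent, plus a bound on the number of descents.

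The missing observation is that $F=M(W)-b$ can only decrease through upward increments of $b$: $F_t-F_s\ge-(b(t)-b(s))$ for $s<t$. Choose $h_b>0$ with $\omega^+_{b,T}(h_b)<\rho_T-\rho$, which uniform continuity of $b$ allows. Then each passage of $F$ from $\rho_T$ down to a fixed level $\rho<\rho_T$ takes longer than $h_b$, so there are at most $1+T/h_b$ such descents in $[\sigma,T]$. Apply \eqref{eq:corner-probability} at each successive hitting time of $\rho$ after a visit to $\rho_T$ and take a union bound. This yields $\bP(\inf_{[\sigma,T]}F\le\eta\mid\rF_\sigma)\le(1+T/h_b)(4\eta/\rho)^2$, uniformly in $\sigma$ and $y$. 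This is exactly the paper's first step, with $0<\eta<\rho<\min\{f,\rho_T/2\}$.

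Your second and third steps are a legitimate alternative route, but the counting is not yet rigorous. Off $G$, the next round-trip can be completed while $Y$ is still below $\delta$, since the boundary may be reached with $Y=F<\delta$. Hence $\{\kappa_j\ge h\}$ need not be $\rF_{\gamma_{j+1}}$-measurable, and the martingale comparison with $\mathrm{Bin}(n,p)$ fails as stated. Also, pathwise the comparison is with the hitting time of $\delta$ by $\sup B^{(\gamma_k)}-B^{(\gamma_k)}$, which merely has the law of $T_{|B|}(\delta)$.

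This can be repaired as follows. Let $\hat\gamma_0$ be the first visit to the maximum face after $\sigma$. Let $\lambda_j$ be the Brownian hitting time of $\delta$ after $\hat\gamma_j$, and set $\hat\gamma_{j+1}:=\inf\{t\ge\hat\gamma_j+\lambda_j:W_t=M_t(W)\}$. Then $\{\lambda_j\ge h\}\in\rF_{\hat\gamma_{j+1}}$, and on $G$ the $j$-th round-trip ends no earlier than $\hat\gamma_j$.

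The paper avoids this bookkeeping altogether. It works on the single event where $\inf_{[\sigma,T]}F>\eta$ and the oscillation of $B^{(\sigma)}$ over windows of length $h_0$ is below $\eta/4$; here $h_0$ also makes the oscillation of $b$ smaller than $\eta/4$. On that event, both the descent and the return leg of every round-trip last at least $h_0$. Hence $N^{(\sigma),f,y}_T\le 1+T/(2h_0)$ deterministically there, which is simpler and automatically uniform in $\sigma$ and $y$.
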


\begin{proof}
We first obtain a uniform lower bound on $F=F^{(\sigma),f,y}$ with high probability. Let $\rho_T$ be given by Proposition~\ref{prop:corner-power} and choose \(0<\eta<\rho<\min\{f,\rho_T/2\}.\) 
Recall $\tau_a = \inf\{t>\sigma:F_t=a\}$ for $a\ge 0$. Let $\tau^{(0)}_\rho=\tau_\rho$ and recursively $\tau^{(k-1)}_{\rho_T}:=\inf\{t>\tau^{(k-1)}_\rho:F_t=\rho_T\}$ and $\tau^{(k)}_\rho:=\inf\{t>\tau^{(k-1)}_{\rho_T}:F_t=\rho\}$ for $k\ge 1$. Then we have
\begin{align}\label{eq:decomp-inf F}
\left\{\inf_{\sigma\le t\le T} F_t\le\eta \right\} = \bigcup_{k\ge 0} \{\tau^{(k)}_{\rho}< \tau_\eta<\tau^{(k)}_{\rho_T},\, \tau_\eta\le T \},
\end{align}
since every visit of $F$ to a level $\eta<\rho$ must be preceded by a visit to $\rho$. For every $k\ge 1$, on the event $\{\tau^{(k)}_{\rho}< \tau_\eta\le T\}$, we have in particular $\tau^{(k-1)}_{\rho_T}< \tau^{(k)}_\rho\le T$ and hence
\[
 b(\tau^{(k)}_\rho)-b(\tau^{(k-1)}_{\rho_T})=M_{\tau^{(k)}_{\rho}}-M_{\tau^{(k-1)}_{\rho_T}}+\rho_T-\rho\ge\rho_T-\rho.
\]
Recall $\omega_{b,T}^+(h)$ in \eqref{eq:positive-modulus}. Continuity of $b$ supplies $h_b>0$ such that $\omega^+_{b,T}(h_b)<\rho_T-\rho$. Thus for every $k\ge 1$, $\tau^{(k)}_\rho-\tau^{(k-1)}_{\rho_T}>h_b$ on the event $\{\tau^{(k)}_{\rho}< \tau_\eta\le T\}$. Therefore, applying Proposition~\ref{prop:corner-power} at $\tau^{(k)}_{\rho}$ for $k=0,\cdots, \lfloor (T-\sigma)/h_b\rfloor$ and combining these bounds with \eqref{eq:decomp-inf F}, we deduce that for every $\rF$-stopping time $\sigma\le T$ with $0\le Y_\sigma\le f$,
\begin{equation}\label{eq:positive-width-minimum}
 \bP\left(\inf_{\sigma\le t\le T} F_t\le\eta \,\Big|\, \rF_\sigma\right)
 \le \left(1+\frac{T}{h_b}\right)\left(\frac{4\eta}\rho\right)^2.
\end{equation}

Fix an error probability $\varepsilon>0$ and choose $\eta$ so that the
right side of \eqref{eq:positive-width-minimum} is smaller than
$\varepsilon/2$.  By the uniform continuity of $b$ and $B$ on $[0,T]$, we can also choose $h_0>0$ such that
\begin{align}
 \sup_{u,v\in[0,T],\,|u-v|\le h_0}
 |b(v)-b(u)|<\frac\eta4,\label{eq:absolute-boundary-modulus} \\
 \bP\bigg( \sup_{\substack{u,v\in[0,T],\,|u-v|\le h_0}} |B^{(\sigma)}_v-B^{(\sigma)}_u|\ge \frac\eta4 \,\bigg|\, \rF_\sigma\bigg)<\frac\varepsilon2.\label{eq:brownian-modulus-event}
\end{align}
Consider the event $\bar A$ as the complement of the union of events appearing in \eqref{eq:positive-width-minimum} and \eqref{eq:brownian-modulus-event}. Let $[t_1,t_3]\subseteq[\sigma,T]$ be the time interval on which $W$ completes a round-trip presented at the beginning of Section~\ref{sec:comparison}, and choose $t_2\in(t_1,t_3)$ to be the first time when $W$ reaches the boundary, so $M_{t_1}(W)=W_{t_1}$, $M_{t_2}(W)=M_{t_3}(W)=W_{t_3}$ and $W_{t_2}=b(t_2)$. Then on the event $\bar A$, we have
\[
\eta\le F_{t_2}=Y_{t_2} =\sup_{t_1\le u\le t_2}(B_u-B_{t_1})^+ - (B_{t_2}-B_{t_1}),
\]
and therefore $t_2-t_1\ge h_0$. Define \(\Gamma(q)_t=q_t-\min_{0\le u\le t}(q_u\wedge0)\) for \(q\in C([0,\infty))\). Then we have
\[
\eta\le F_{t_3}=W_{t_3}-b(t_3)=\Gamma(q)_{t_3-t_2}, \qquad \text{where}\quad
 q_s=B_{s+t_2}-B_{t_2}-(b(s+t_2)-b(t_2)).
\]
Using \eqref{eq:absolute-boundary-modulus} and the Brownian modulus
bound on $\bar A$, we obtain that $t_3-t_2\ge h_0$ on $\bar A$. Therefore $N^{(\sigma),f,y}_T\le 1+T/(2h_0)$ for all $y\in[0,f]$ and $\sigma\le T$. From \eqref{eq:positive-width-minimum} and \eqref{eq:brownian-modulus-event}, we have $\bP(\bar A\mid\rF_\sigma)\ge 1-\varepsilon$, which completes the proof of the asserted tightness.
\end{proof}

\begin{proof}[Proof of Proposition~\ref{prop:round-trip-count}]
Fix \(A_1\in(\bar\nu,A)\), and set $T=S+H$. Note that \( m_L=(1-e^{-\bar\nu L})/\bar{\nu}\) increases to $1/\bar\nu$ as $L\to\infty$. Therefore we can take $L$ large enough and $\mu>0$ so that $1/A_1 < \mu< m_L$. 
By Proposition~\ref{prop:cycle-kernel}, the conditional restart construction before Proposition~\ref{prop:positive-width-tightness} and the strong Markov property of $B$, there exists \(\delta\in(0,\bar\rho)\) small enough such that for every stopping time \(\sigma\le T\), on the event \(\{Y_\sigma=0,\,0<F_\sigma\le\delta\}\),
\begin{align}\label{eq:cycle-drift}
\bE_{\sigma,F_{\sigma}} \bigl[\xi^L\bigr]\ge\mu,
\qquad
\bE_{\sigma,F_{\sigma}} \bigl[(\xi^L)^2\bigr]\le C_L.
\end{align}

Start form width $F_s = r<\delta$ on the maximum face (i.e. $Y_s=0$), and recall $\tau_\delta = \inf\{t>s:F_t=\delta\}.$ Let $\Sigma_0 = s$ and define $\Sigma_j$ recursively as follows: if $\Sigma_{j-1}<\tau_\delta\wedge T$, define $\Sigma_j = v_j\wedge T$ where $v_j$ is the terminal time when $W^{s,r}$ completes a cycle with outer cap $\bar\rho$ (note that in this case $W^{s,r}$ has completed $j-1$ cycles which are all round-trips); otherwise $\Sigma_j = \Sigma_{j-1}$. 
Let $\chi$ be the first $j$ such that $\Sigma_{j} < \tau_\delta\wedge T \le \Sigma_{j+1}$, and put $\rG_j:= \rF_{\Sigma_j}$. Then the event $\{\chi\ge j\}=\{\Sigma_j<\tau_\delta\wedge T\}\in\rG_j$. Let $\xi^L_j$ be defined via \eqref{eq:tau-xi-truncated} with $(f,\sigma^\uparrow)$ replaced by $(F_{\Sigma_{j-1}}, v_j)$ there, and
\begin{align*}
Z_j:=\begin{cases}\xi_j^L, & \chi\ge j-1,\\
0, & \text{otherwise},
\end{cases}
\qquad m_j:=\bE_{s,r}(Z_j\mid\rG_{j-1}).
\end{align*}
Then we have $m_j\ge \mu\ind_{\{\chi\ge j-1\}}$ and $\bE_{s,r}(Z_j^2\mid\rG_{j-1})\le C_L \ind_{\{\chi\ge j-1\}}$  by the strong Markov property of $B$ and \eqref{eq:cycle-drift}. Hence \(\cM_n:=\sum_{j=1}^n(Z_j-m_j)\) is a square-integrable $\rG$-martingale with its second moment bounded by $C_L n$. 
Put \(\ell_r:=\log(\delta/r).\) Then on the event $\{\chi>n\}$, $W^{s,r}$ completes $n$ round-trips before $T$ while $F$ does not reach $\delta$ in these round-trip intervals. Hence $f_j:= F_{\Sigma_j}<\delta$ and 
\[
\sum_{j=1}^n Z_j =\sum_{j=1}^n\left[\log \bigg(\frac{f_j}{f_{j-1}}\bigg)\wedge L\right] \le\log\frac{f_n}{r}
 <\ell_r.
\]
Consequently \(\cM_n\le\ell_r-n\mu\) on the event \(\{\chi> n\}.\) Chebyshev's inequality yields that, as $\eta\downarrow 0$,
\[
\sup_{0\le s\le S,\, 0<r\le \eta}\bP_{s,r}(\chi>\lceil A_1\ell_r\rceil) \le \sup_{0\le s\le S,\, 0<r\le \eta}\bP_{s,r}(M_{\lceil A_1\ell_r\rceil}\le -(A_1 \mu -1) \ell_r) \le \frac{C_L (A_1\ell_\eta + 1)}{(A_1\mu -1)^2 \ell_\eta^2} \to 0. 
\]
On the other hand, on the event \(\{\tau_\delta\le T\}\), we have
\(F_{\tau_\delta}=\delta\) and \(Y_{\tau_\delta}\in[0,\delta]\).
Denote by $P_{\delta}$ the number of round-trips of $W^{s,r}$ in $[\tau_\delta,T]$ and set $P_\delta:=0$ if $\tau_\delta\ge T$. We observe that $N_{s,H}\le (\chi+1) + P_\delta$. Let $q_r:=A\log(1/r)-\lceil A_1\ell_r\rceil-1$ which tends to $\infty$ as $r\to 0$. It follows that
\begin{align*}
&\sup_{0\le s\le S,\, 0<r\le \eta}\bP_{s,r}(N_{s,H}>A\log(1/r))\\
\le\, & \sup_{0\le s\le S,\, 0<r\le \eta}\bP_{s,r}(\chi > \lceil A_1\ell_r\rceil) + \sup_{0\le s\le S,\, 0<r\le \eta}\bE_{s,r}\bigl[\bP_{s,r}(P_{\delta}>q_r\mid \rF_{\tau_\delta\wedge T}) \bigr]\\
\to &\ 0, \qquad {as}~ \eta\downarrow 0,
\end{align*}
where the limit follows by Proposition~\ref{prop:positive-width-tightness}. This completes the proof.
\end{proof}

\section{Proof of Theorem~\ref{thm:main}}\label{sec:proof}

We prove Theorem~\ref{thm:main} in this section. The case $x>0$ in \eqref{eq:regulator} has been solved in Corollary~\ref{cor:existence} and Proposition~\ref{prop:positive-gap}, so it suffices to prove the theorem in the case $x=0$.
The case $\nu<1/2$ can be solved for example by applying Picard's iteration and the fixed point theorem, as introduced by Yor \cite[Section 8.5]{Yor1992}. We can also view it as a simple consequence of Williams \cite[Theorem 2.1]{Williams1995}. Consider the Skorokhod problem in an orthant as shown in \eqref{eq:2D-Skorokhod}. For $Q=I-R$, the spectral radius of $|Q|$ is $\sqrt {|\nu^*|}<1$, so \eqref{eq:2D-Skorokhod} admits a unique $\rF$-adapted solution $(X,Y,K,V)$ . 
Set $W=X+b$. Adding the two coordinate equations gives \(Y=x-W+V/\bar\nu.\) Since $V/\bar\nu$ is a continuous increasing function starting from 0 and increases only when $Y=0$, Skorokhod's lemma (in one-dimensional case) yields
\[
\frac{1}{\bar\nu}V_t=\sup_{0\le s\le t}(W_s-x)^+ =M_t(W)-x,\qquad t\ge 0,
\]
where the last equality uses $W_0=x$. Substituting this equality into \eqref{eq:2D-Skorokhod} gives \eqref{eq:regulator}, and the identification $K=\frac12L^0(W-b)$ follows by Proposition~\ref{prop:positive-gap}.

\bigskip

The proof in the case $\nu\ge 1/2$ is an application of  Propositions~\ref{prop:comparison} and \ref{prop:round-trip-count}, following the idea of \cite[Theorem 1]{ChaumontDoney}. 
We first choose a sequence of Brownian stopping times $(u_n:n\ge 1)$ decreasing to 0 and modify the driving function $B$ on $[0,u_n]$, thereby obtaining a sequence of solutions $(W^{(n)}:n\ge 1)$ with driving functions $B^{(n)}$ that agree with $B$ after $u_n$. We then apply Propositions~\ref{prop:comparison} and \ref{prop:round-trip-count} to show that $(W^{(n)}:n\ge 1)$ is a Cauchy sequence.
The locally uniform limit solves \eqref{eq:regulator}, and the decreasing stopping times recover adaptedness. Pathwise uniqueness follows from the same comparison estimate.

\subsection{Existence and adaptedness}
Recall $\bar\nu = 1-\nu\in(0,1/2]$ and $\nu^* = \nu/\bar\nu\in[1,\infty)$. We can take $A>A_0>\bar\nu$ such that $2A \log (\nu^*) <1$; indeed $2\bar\nu\log (\nu^*) = 2\log (\nu^*)/(1+\nu^*)<1$ for all $\nu^* \in[1,\infty)$. Put  \(\kappa:=1-2A\log \nu^*>0\) and set
\begin{equation}\label{eq:epsilon-n}
 \varepsilon_n:=2^{-n-4},  \qquad \omega_b^0(h):=\sup_{0\le t\le h}|b(t)|.
\end{equation}

\noindent Recall the definition of $\omega^+_{b,T}(h)$ in \eqref{eq:positive-modulus}. Put $q_0=r_0=1$.  Recursively choose $q_n>0$ so small, and then define
\begin{equation*}
 r_n:=\varepsilon_n\sqrt{q_n}, \qquad \overline f_n:=\frac{r_n}{\bar{\nu}}+\omega_b^0(q_n),
\end{equation*}
such that
\begin{align}
& q_n <\min\bigg\{\frac1n, \frac12 {q_{n-1}} \bigg\}, & & r_n < \frac12 r_{n-1},\label{eq:qn-rn-basic}\\
& \omega^+_{b,1}(q_n)\le\varepsilon_n^2\sqrt{q_n} =\varepsilon_n r_n, & & n^2r_n^\kappa\le2^{-n},\label{eq:qn-boundary}
\end{align}
and
\begin{equation}\label{eq:rn-cycle}
    \sup_{\substack{0\le s\le 1\\ f\in[r_n/(2\bar\nu),\,\overline f_n]}}  \bP_{s,f}\left(N_{s,n}>A\log(1/r_n)\right)\le 2^{-n},
\end{equation}
where $\bP_{s,f}$ and $N_{s,T}$ are defined at the beginning of Section~\ref{sec:cycles}. 
This recursive choice is possible: \eqref{eq:PB} gives the first condition in \eqref{eq:qn-boundary}. Moreover, for $r_n$ small and $f\in[r_n/(2\bar\nu),\overline f_n]$, one has $f\ge r_n$ because $2\bar\nu\le1$, and hence  \(A\log(1/r_n)\ge A_0\log(1/f).\) Continuity of $b$ at zero gives $\omega_b^0(q)\to 0$, and therefore \eqref{eq:rn-cycle} follows by Proposition~\ref{prop:round-trip-count} with $S=1,\, H=n$ whenever we take $q_n$ (and thus $\overline f_n$) sufficiently small.

Let \(u_n:=\inf\{t\ge0:B_t=r_n\}.\) We have \(u_m\le u_n\) for \(m>n\) and
\(u_n\downarrow 0\) almost surely as $n\to\infty$.  Define the record event
\begin{equation}\label{eq:record-good}
 G_n:=
 \left\{u_n\le q_n,\quad
 \inf_{0\le s\le u_n}B_s>-n^2r_n\right\}.
\end{equation}
On the event $G_n$, we have
\begin{equation}\label{eq:b-at-record}
 -\omega_b^0(q_n)\le b(u_n)\le \omega^+_{b,1}(q_n)\le\varepsilon_n r_n.
\end{equation}
We claim that $G_n$ occurs for all sufficiently large $n$ a.s. Indeed, the reflection principle gives
\[
 \bP(u_n>q_n) =\bP\left(\sup_{s\le q_n}B_s<r_n\right) \le C\frac{r_n}{\sqrt{q_n}} =C\varepsilon_n,
\]
while the gambler's-ruin formula applied between \(r_n\) and \(-n^2r_n\)
gives
\[
 \bP\left(\inf_{s\le u_n}B_s\le-n^2r_n\right) =\frac1{1+n^2}.
\]
Therefore $\sum_n \bP(G_n^c)<\infty$, and the claim follows by the Borel--Cantelli Lemma.

We next put \(b^\uparrow(t):=\sup_{0\le s\le t } b(s),\) which is continuous and increasing.  On the event $G_n$, we have $b^\uparrow(u_n)\le\varepsilon_n r_n  < r_n/\bar{\nu}$.  Define, for
$0\le t\le u_n$,
\begin{equation*}
\begin{aligned}
 W_t^{(n)} &= M_t(W^{(n)}) := b^\uparrow(t) + \left(\frac{r_n}{\bar{\nu}}-b^\uparrow(u_n)\right)\frac{t}{u_n}, \\
 B_t^{(n)}&:=\bar\nu\, W_t^{(n)}, \qquad K_t^{(n)}:=0.
\end{aligned}
\end{equation*}
The path $W^{(n)}$ is increasing, lies above $b$, and satisfies
$W^{(n)}=B^{(n)}+\nu M(W^{(n)})$.  Extend the modified driver by
$B_t^{(n)}=B_t$ for $t\ge u_n$; it is continuous because both values at
$u_n$ equal $r_n$.  At the record time,
\begin{equation}\label{eq:fn}
 W_{u_n}^{(n)}=M_{u_n}(W^{(n)})=\frac{r_n}{\bar{\nu}},
 \qquad
 f_n:=F_{u_n}^{(n)}
 =\frac{r_n}{\bar{\nu}}-b(u_n)
 \in\left[\frac{r_n}{2\bar\nu},\overline f_n\right].
\end{equation}
The lower bound follows from \eqref{eq:b-at-record} and
$\varepsilon_n\le1/(2\bar\nu)$; the upper bound follows from the definition of $\omega_b^0$.
Continue $W^{(n)}$ after \(u_n\) so that $W^{(n)}_{\cdot+u_n}-b(u_n)$ is the unique solution of \eqref{eq:regulator} with driving function \(B_{u_n+\cdot}-B_{u_n}\) and boundary $b(u_n+\cdot)-b(u_n)$ and starting from $f_n$.   
To define the approximations on the whole sample space, set \(W^{(n)}=K^{(n)}=B^{(n)}=0\) on \(G_n^c\).  
On the event \(G_n\), let \(N^{(n)}\) denote the number of completed round-trips of \(W^{(n)}\) in $[u_n,u_n+n]$, and set \(C_n = G_n^c\cup \{N^{(n)}\le A\log(1/r_n)\}.\) The strong Markov property at \(u_n\), the width range in \eqref{eq:fn}, and \eqref{eq:rn-cycle} give  \(\bP(C_n^c)\le2^{-n}.\) Hence the Borel--Cantelli Lemma yields that $C_n$ holds for all sufficiently large $n$ a.s. We can work henceforth on the fixed probability-one event \(\Omega_*:=\liminf_{n\to\infty} (C_n\cap G_n)\). 

For a sample point in \(\Omega_*\), choose
\(N=N(\omega)\) so that \(G_j\cap C_j\) holds for every \(j\ge N\), and take \(m>n\ge N\). 
We have \(B_s^{(m)}\le r_n\) and \(K_s^{(m)}\le K_{u_n}^{(m)}\) for \(0\le s\le u_n\), and both terms attain their upper bounds at \(u_n\). Equation \eqref{eq:max-identity} for \(W^{(m)}\) therefore gives
\[
 W_{u_n}^{(m)}=M_{u_n}(W^{(m)}), \qquad \bar\nu M_{u_n}(W^{(m)})=r_n+K_{u_n}^{(m)}.
\]
Equation \eqref{eq:K-rough} gives, uniformly over \(m>n\ge N(\omega)\),
\begin{align} \label{eq:record-regulator-bound}
 K_{u_n}^{(m)} \le \left[\sup_{s\le u_n}b(s) - \inf_{s\le u_n}B_s^{(m)}\right]^+ \le \varepsilon_n r_n+n^2r_n \le 2 n^2r_n.
\end{align}
Indeed, \(\sup_{s\le u_n}b(s)\le \omega^+_{b,1}(q_n)\le
\varepsilon_nr_n\) and \(B^{(m)}_s:0\le s\le u_m\) is nonnegative, while \(B^{(m)}=B\) after \(u_m\) whose infimum up to \(u_n\) is greater than \(-n^2r_n\) on \(G_n\).
Restart $W^{(m)}$ and $W^{(n)}$ at \(u_n\) and use the common driver \(B_{u_n+\cdot}-B_{u_n}\) and boundary \(b(u_n+\cdot)-b(u_n)\). Their position at time $u_n$ are respectively $r_n/\bar\nu$ and
\[
 \frac{r_n}{\bar\nu} + d_{m,n},\qquad
 d_{m,n}:=\frac{K_{u_n}^{(m)}}{\bar{\nu}}\le \frac2{\bar\nu} n^2 r_n.
\]
Proposition~\ref{prop:comparison} therefore yields that, for $m>n\ge N(\omega)\vee T$ large enough,
\begin{equation}\label{eq:cauchy-after-record}
 \sup_{u_n\le t\le T}|W_t^{(m)}-W_t^{(n)}|\le \frac2{\bar\nu}n^2r_n\,(\nu^*)^{2N^{(n)}+1}.
\end{equation}
 On the event \(C_n\), the right side of \eqref{eq:cauchy-after-record} is bounded by \( (2\nu^*/\bar\nu) n^2 r_n^{1-2A\log \nu^*} = (2\nu^*/\bar\nu) n^2 r_n^\kappa\) which tends to 0 as $n\to\infty$ from \eqref{eq:qn-boundary}.
In the time interval \([0,u_n]\), we have \(0\le W^{(n)}\le r_n/\bar{\nu}\), while  \(-\omega_b^0(q_n)\le b\le W^{(m)}\le M_{u_n}(W^{(m)})\le (1+2n^2) r_n/\bar\nu.\) Since $0<\kappa\le1$, \eqref{eq:qn-boundary} also implies
$n^2r_n\to 0$.  Since $\omega_b^0(q_n)\to0$, the same uniform convergence
holds on the initial interval $[0,u_n]$. We have
proved on \(\Omega_*\), and hence almost surely, that \((W^{(n)}:n\ge 1)\) is
a Cauchy sequence of continuous functions under the supremum norm on any compact interval.
The running-maximum map is one-Lipschitz in the supremum norm on compact intervals, so \(M(W^{(n)})\) is Cauchy as well, and hence so is the sequence
\[
 K^{(n)}=W^{(n)}-B^{(n)}-\nu M(W^{(n)}),\qquad n\ge 1.
\]
Since \(u_n\downarrow 0\), we obtain that $B^{(n)}$ converges to $B$ uniformly on compact intervals as $n\to \infty$. Denote by \((W,M,K)\) the limit of \((W^{(n)},M(W^{(n)}),K^{(n)})\).  Then \(M=M(W)\) and
\begin{equation}\label{eq:limit-regulator}
 W_t=B_t+\nu M_t(W)+K_t,\qquad W_t\ge b(t).
\end{equation}
The uniform limit \(K\) is continuous and nondecreasing, with \(K_0=0\).
Let \(I\subseteq\{t\ge 0:W_t>b(t)\}\) be a compact interval. Then \(W^{(n)}>b\) on \(I\) for all large \(n\), so every \(K^{(n)}\), and hence \(K\), remains constant on \(I\).  Since \(\{W>b\}\) is a countable union of open intervals,
\[
 \int_0^\infty \ind_{\{W_t>b(t)\}}\dd K_t=0.
\]
Thus \((W,K)\) is a solution of \eqref{eq:regulator} for $x=0$ up to time $T$. Each aaproximation $W^{(n)}$ is $(\rF_{u_n\vee t}:t\ge 0)$-adapted since $G_n\in \rF_{u_n}$. Since $u_n\downarrow 0$, we have $\cap_{n\ge 1}\rF_{u_n\vee t} = \rF_t$ for every $t\ge 0$. It follows that $(W,K)$ is $\rF$-adapted. We moreover have \(K=\frac12 L^0(W-b)\) by Proposition~\ref{prop:positive-gap}. Thus the constructed process $W$ solves \eqref{eq:main}.


\subsection{Pathwise uniqueness}

Let \(Z\) be any other solution of \eqref{eq:main} driven by the same Brownian motion $B$, and set \(K_t^Z:=\frac12L_t^0(Z-b)\) so that \((Z,K^Z)\) is a solution to \eqref{eq:regulator}.  At time \(u_n\),
Lemma~\ref{lem:identities} gives
\begin{equation}\label{eq:other-record}
 Z_{u_n}=M_{u_n}(Z),\qquad
 \bar\nu M_{u_n}(Z)=r_n+K_{u_n}^Z.
\end{equation}
Indeed \(B_s\le r_n\) and \(K_s^Z\le K_{u_n}^Z\) for \(s\le u_n\), so
the supremum in \eqref{eq:max-identity} is attained at \(u_n\).
On the event \(G_n\), using
\(\sup_{s\le u_n}b(s)\le\omega^+_{b,1}(q_n)\le\varepsilon_nr_n\), we derive that
\begin{equation}\label{eq:other-K-bound}
 K_{u_n}^Z \le\left[\sup_{s\le u_n}b(s)-\inf_{s\le u_n}B_s\right]^+ \le 2n^2r_n,
\end{equation}
while \(Z_{u_n} - b(u_n) = (r_n+K_{u_n}^Z)/\bar\nu - b(u_n)\ge r_n/\bar\nu -\varepsilon_n r_n>0.\) By Proposition~\ref{prop:corner-power}, $M(Z)-b>0$ after \(u_n\).
Compare \(Z\) after \(u_n\) with \(W^{(n)}\). They have the same boundary $b(\cdot+u_n)-b(u_n)$ and are both driven by $B_{\cdot+u_n}-B_{u_n}$, and the difference of their starting positions at time $u_n$ is
\(K_{u_n}^Z/\bar{\nu}\). Therefore Proposition~\ref{prop:comparison} similarly gives
\[
 \sup_{u_n\le t\le T}|Z_t-W_t^{(n)}|\le \frac{2\nu^*}{\bar\nu} n^2 r_n^\kappa \to 0
 \quad\text{as}~ n\to\infty.
\]
On \([0,u_n]\), we have \(-\omega_b^0(q_n)\le b(t)\le Z_t\le M_{u_n}(Z)\le (1+2n^2)r_n/\bar\nu \) from
\eqref{eq:other-record}--\eqref{eq:other-K-bound}, while \(0\le W^{(n)}\le r_n/\bar{\nu}\).
Hence the convergence is indeed uniform on \([0,T]\). Since \(W^{(n)}\to W\) uniformly on compact intervals, \(Z=W\) on \([0,T]\) for all $T\ge 0$, which proves the pathwise uniqueness of \eqref{eq:main} and completes the proof of Theorem~\ref{thm:main}.

\section{Proof of Theorem~\ref{thm:nonexistence}}\label{sec:example}

In this section, we construct the reflecting boundary $b$ satisfying Theorem~\ref{thm:nonexistence}. We first prove the following necessary no-charge property, complementary to Proposition~\ref{prop:positive-gap}.
\begin{lemma}\label{lem:necessary-nocharge}
Let $b$ be a deterministic continuous increasing function. If an adapted continuous process $W$ solves \eqref{eq:main} up to time $T>0$ for some $\nu<1$, then almost surely,
\begin{equation}\label{eq:nocharge}
 \int_0^T\ind_{\{W_s=b(s)\}}\dd b(s)=0.
\end{equation}
\end{lemma}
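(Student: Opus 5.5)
Since $b$ is increasing and of finite variation, $X:=W-b$ is a continuous semimartingale with local martingale part $B$. Write
\[
X_t = B_t - b(t) + \nu M_t(W) + \tfrac12 L^0_t(X).
\]
The plan is to compute $\int_0^t \ind_{\{X_s=0\}}\dd X_s$ in two ways and compare them.

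First, because $X\ge 0$, Tanaka's formula applied to $X=X^+$ gives
\[
\int_0^t\ind_{\{X_s=0\}}\dd X_s=\tfrac12 L^0_t(X).
\]
Second, we expand $\dd X$ term by term:
\begin{itemize}
\item The Brownian term vanishes, $\int\ind_{\{X=0\}}\dd B=0$, because its quadratic variation is $\int\ind_{\{X=0\}}\dd s$, which is $0$ by the occupation density formula.
\item The local time term gives $\int \ind_{\{X=0\}}\dd L^0(X)=L^0_t(X)$, since $\dd L^0(X)$ is carried by $\{X=0\}$.
\end{itemize}
Subtracting the two expressions yields
\[
\tfrac12 L^0_t(X)= -\int_0^t\ind_{\{X_s=0\}}\dd b(s)+\nu\int_0^t\ind_{\{X_s=0\}}\dd M_s(W)+\tfrac12 L^0_t(X)\cdot 2 ,
\]
so the identity to exploit is
\[
\int_0^t\ind_{\{X=0\}}\dd b=\nu\int_0^t\ind_{\{X=0\}}\dd M(W)+\tfrac12 L^0_t(X).
\]
The sign conventions need checking carefully; the key point is that we get a linear relation among the three nonnegative measures $\ind_{\{X=0\}}\dd b$, $\ind_{\{X=0\}}\dd M$ and $\dd L^0$.

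Next we identify $\ind_{\{X=0\}}\dd M(W)$. The measure $\dd M$ is carried by $\{W=M\}$, so on $\{X=0\}$ it is carried by $\{M=b\}=\{F=0\}$ with $F=M(W)-b\ge 0$. The function $F$ is continuous, nonnegative and of finite variation, so $\ind_{\{F=0\}}\dd F=0$, which gives $\ind_{\{F=0\}}\dd M=\ind_{\{F=0\}}\dd b$. Substituting, we obtain
\[
\bar\nu\int\ind_{\{X=0,F=0\}}\dd b+\int \ind_{\{X=0,F>0\}}\dd b = \tfrac12 L^0_t(X)
\]
(again up to sign bookkeeping). This alone does not force zero, so we also need the pointwise argument of Proposition~\ref{prop:positive-gap}.

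The main obstacle is that (PB) fails here: $b$ is only increasing, so the LIL argument giving $\bP(X_t=0)=0$ does not apply. Instead I would use the semimartingale structure more cleverly. Approximate $\int_0^t\ind_{\{X=0\}}\dd b$ from the relation $\dd X = \dd B - \dd b + \nu\,\dd M + \dd K$. The measure $\dd K$ is carried by $\{X=0\}$, and $\dd K=\frac12\dd L^0(X)$ by \eqref{eq:main}. On the other hand, Tanaka combined with the occupation density shows that $\ind_{\{X=0\}}\dd X=\tfrac12\dd L^0$ exactly. Equating this with the term-by-term decomposition $\ind_{\{X=0\}}(\dd B-\dd b+\nu\,\dd M)+\dd K$, and using $\dd K=\tfrac12\dd L^0$, we get
\[
\ind_{\{X=0\}}\dd b=\nu\,\ind_{\{X=0\}}\dd M=\nu\,\ind_{\{X=0,F=0\}}\dd b .
\]
Hence $(1-\nu\ind_{\{F=0\}})\ind_{\{X=0\}}\dd b=0$. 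Since $1-\nu\ind_{\{F=0\}}\ge\min(1,\bar\nu)>0$, this forces $\ind_{\{X=0\}}\dd b=0$, which is \eqref{eq:nocharge}. The step I expect to need the most care is checking that the hypothesis "$W$ solves \eqref{eq:main}" really means the reflection term is the semimartingale local time, so that $\ind_{\{X=0\}}\dd X=\dd K$ holds exactly with no extra singular part. This identity is precisely what makes the $\dd b$-term cancel against nothing.
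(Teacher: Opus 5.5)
Your final paragraph is correct and is essentially the paper's proof. Tanaka's formula for $X=X^+$, together with $\int\ind_{\{X=0\}}\dd B=0$ and $\dd K=\tfrac12\dd L^0(X)$, gives $\ind_{\{X=0\}}\dd b=\nu\,\ind_{\{X=0\}}\dd M$. The identity $\ind_{\{F=0\}}\dd F=0$ turns the right side into $\nu\,\ind_{\{F=0\}}\dd b$, and this forces $\ind_{\{X=0\}}\dd b=0$ because $\nu<1$; the paper phrases this last step as $A=\nu J$ with $0\le J\le A$.

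You should delete the earlier displayed identities. They carry a spurious $\tfrac12 L^0_t(X)$ term, whereas the local-time contributions actually cancel exactly. The remark about needing the (PB)/LIL pointwise argument should also go, since it plays no role in the proof.
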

\begin{proof}
Put $X=W-b\ge0$, $M=M(W)$ and $K=\tfrac12 L^0(X)$. The process $K$ is continuous and increasing, and its measure is a.s. carried by $\{X=0\}$. Since the quadratic variation $\langle X\rangle_t = t$, the occupation-density formula gives \(\int_0^t\ind_{\{X_s=0\}}\dd s=0\) and thus $\int_0^t\ind_{\{X_s=0\}}\dd B_s = 0$ for all $0\le t\le T$. Define 
\[
 A_t=\int_0^t\ind_{\{W_s=b(s)\}}\dd b(s),\qquad
 J_t=\int_0^t\ind_{\{W_s=b(s)\}}\dd M_s,\qquad 0\le t\le T.
\]
Tanaka's formula for $X=X^+$ gives, for every $0\le t\le T$,
\[
 \frac12L^{0}_t(X) =\int_0^t\ind_{\{X_s=0\}}\dd X_s =\nu J_t+K_t-A_t,
\]
and hence $A_t=\nu J_t$.
On the other hand, the continuous function $D=M-b$ is nonnegative and locally of finite variation, which implies  $\ind_{\{D=0\}}\dd D=0$ as a signed measure and hence \(\ind_{\{M=b\}}\dd M=\ind_{\{M=b\}}\dd b.\) 
Because $M\ge W\ge b$ and $\dd M$ is carried by $\{W=M\}$, we deduce that
\[
 J_t=\int_0^t\ind_{\{M_s=W_s=b(s)\}}\dd M_s
     =\int_0^t\ind_{\{M_s=b(s)\}}\dd b(s)\le A_t,\qquad 0\le t\le T.
\]
Since $\nu<1$ and $A_t=\nu J_t$, we obtain that $A_t=J_t=0$ for all $0\le t\le T$, which proves \eqref{eq:nocharge}.
\end{proof}

Fix $\alpha\in(0,1/2)$ and set $r:=2^{-1/\alpha}\in(0,1/4)$. Define
\[
    S_0(x):=rx,\qquad
    S_1(x):=1-r+rx,\qquad x\in[0,1].
\]
For a finite word $\mathbf{i}=(i_1,\ldots,i_n)\in\{0,1\}^n$, let
\[
    S_\mathbf{i} :=S_{i_1}\circ\cdots\circ S_{i_n},
    \qquad
    I_\mathbf{i}:=S_\mathbf{i}([0,1]).
\]
We have $I_\mathbf{i}=[a_\mathbf{i},a_\mathbf{i}+r^n]$ where $a_\mathbf{i}:=(1-r)\sum_{k=1}^n i_k r^{k-1}$. Set $C_r^{(0)}:=[0,1]$ and then
\[
    C_r^{(n)}
    :=\bigcup_{\mathbf{i}\in\{0,1\}^n}I_\mathbf{i},
    \qquad
    \mathcal C_r:=\bigcap_{n=0}^\infty C_r^{(n)}.
\]
Equivalently,
\[
    \mathcal C_r =  \left\{ (1-r)\sum_{k=1}^\infty a_k r^{k-1}: a_k\in\{0,1\} ,\, \forall\, k\ge 1\right\}.
\]
Let $(\varepsilon_j:j\ge 1)$ be a sequence of i.i.d. Bernoulli random variables with parameter $1/2$, and $\mu$ be the law of $(1-r)\sum_{j=1}^{\infty}\varepsilon_j r^{j-1}$. Then $\mu$ is supported on $\mathcal C_r$, and $\mu(I_\mathbf{i})=2^{-n}$ for all $\mathbf{i}\in\{0,1\}^n$. We define
\begin{equation}\label{eq:boundary}
 b(t)=\begin{cases}\mu([0,t]), & 0\le t\le1,\\
 1, &t> 1.
 \end{cases}
\end{equation}
The measure $\mu$ has no atoms, since for every $x\in\mathcal C_r$ and $n\ge 1$ there exists a unique $\mathbf{i}\in\{0,1\}^n$ such that $x\in I_\mathbf{i}$, which implies $\mu(\{x\})\le 2^{-n}$ for every $n\ge 1$. It is moreover singular with respect to Lebesgue measure since $|C_r^{(n)}|=2^n r^n$ which tends to 0 as $n\to\infty$. 
We now prove that $b$ is $\alpha$-H\"older continuous. Let $I$ be an interval of length $h\in(0,1]$ and choose $k\ge 0$ such that $r^{k+1}<h\le r^k$. For $k\ge 1$ and two words $\mathbf{i}\ne \mathbf{j}\in \{0,1\}^k$, the distance between $I_\mathbf{i}$ and $I_\mathbf{j}$ is at least $(1-2r)r^{k-1}>r^k$ because $r<1/4$. Therefore there exists at most one $\mathbf{i}\in\{0,1\}^k$ such that $I\cap I_\mathbf{i}\ne\emptyset$, and thus 
\[
\mu(I)\le 2^{-k} = r^{k\alpha} \le (r^{-1} h)^{\alpha} = 2 h^{\alpha}.
\]
The bound also holds trivially for $h>1$, and the case $k=0$ is immediate. It proves the $\alpha$-H\"older continuity of $b$. Since $\dd b = \mu$,  $b$ satisfies condition~(i) in Theorem~\ref{thm:nonexistence}.
The construction also gives
\begin{equation}\label{eq:self-similarity}
 \mathcal C_r\cap[0,r^n]=r^n\mathcal C_r,
 \qquad b(r^n)=2^{-n}=r^{n\alpha}.
\end{equation}
In particular, $(b(r^n)-b(0))/\sqrt{r^n} = r^{n(\alpha-1/2)}\to \infty$ as $n\to \infty$. Thus $b$ fails \eqref{eq:PB} already at the initial time. 

We prove the following result as a key ingredient of the proof of Theorem~\ref{thm:nonexistence}, showing that the contact set $\{t:w_t=b(t)\}$ has positive $\dd b$-measure.

\begin{proposition}\label{prop:contacts}
Fix $\nu<1$, $T_0\in(0,1]$ and $\gamma\in(\alpha,1/2)$. Let $e:[0,\infty)\to\bR$ satisfying $e(0)=0$, and suppose that there exists some $H>0$ such that
\begin{align}\label{eq:holder-e}
 |e(t)-e(s)|\le H|t-s|^\gamma,\qquad\forall\, 0\le s,t\le T_0.
\end{align}
Suppose continuous paths $w,k^w$ satisfy the equation
\begin{equation}\label{eq:det-regulator}
 w_t=e(t)+\nu M_t(w)+k^w_t,\qquad w_t\ge b(t),\qquad w_0=0,
\end{equation}
where $k^w$ is a continuous increasing function starting from 0 such that $\dd k^w$ is supported on $\{w=b\}$. Then there exists a constant $C>0$ depending only on $\gamma,r$ and $\nu$, such that for every $n\ge 1$ with $r^n\le T_0$,
\begin{equation}\label{eq:contact-bound}
 \int_0^{r^n}\ind_{\{w_t=b(t)\}}\dd b(t) \ge r^{n\alpha} - C H r^{n\gamma}.
\end{equation}
In particular, \(\int_0^{r^n}\ind_{\{w_t=b(t)\}}\dd b(t)>0\) for all sufficiently large $n$.
\end{proposition}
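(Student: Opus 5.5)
The plan is to decompose the boundary increment $b(r^n)=r^{n\alpha}$ (by \eqref{eq:self-similarity}) over the closed contact set $Z:=\{t\in[0,r^n]:w_t=b(t)\}$ and its complementary gaps. Since $\dd b=\mu$ has no atoms,
\[
 r^{n\alpha}=\int_0^{r^n}\ind_Z\,\dd b+\sum_{(a,c)}\bigl(b(c)-b(a)\bigr),
\]
where the sum runs over the connected components $(a,c)$ of $[0,r^n]\setminus Z$. The component containing $r^n$ (if $r^n\notin Z$) needs separate care. So it suffices to show that the total boundary increment carried by the gaps is at most $CHr^{n\gamma}$. Only gaps with $b(c)>b(a)$ contribute, i.e.\ gaps on which $\mu$ puts mass.

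The first key step is a per-gap estimate. On an interior gap we have $w_a=b(a)$, $w_c=b(c)$, and $k^w$ is constant on $[a,c]$, so \eqref{eq:det-regulator} gives
\[
 b(c)-b(a)=e(c)-e(a)+\nu\bigl(M_c(w)-M_a(w)\bigr)\le H(c-a)^\gamma+\nu\bigl(M_c(w)-M_a(w)\bigr).
\]
I would handle the maximum term as follows. If $\nu\le0$ it is nonpositive and can be dropped. If $\nu>0$ and $M$ increases inside the gap, then $w$ reaches a new maximum while staying strictly above $b$. On such a stretch \eqref{eq:max-phase} applies, and the increase of $M$ is bounded by $\bar\nu^{-1}$ times the oscillation of $e$ on the gap, i.e.\ by $\bar\nu^{-1}H(c-a)^\gamma$. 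In either case each contributing gap satisfies $b(c)-b(a)\le C_\nu H(c-a)^\gamma$. The terminal gap ending at $r^n$ is treated identically using $w_{r^n}\ge b(r^n)$, and the initial point uses $w_0=b(0)=0$.

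The second step is a counting argument using the Cantor structure. A gap of length $\ell\in(r^{n+m+1},r^{n+m}]$ can intersect at most one level-$(n+m)$ interval $I_{\mathbf i}\subseteq[0,r^n]$, by the separation estimate used for Hölder continuity. A gap with $\mu$-mass must intersect $\mathcal C_r$. Moreover, distinct gaps are disjoint and must each leave room for contact points between them. From this I would argue that the number of mass-carrying gaps of length in $(r^{n+m+1},r^{n+m}]$ is at most $c\,2^{m}$ (roughly, at most boundedly many per level-$(n+m)$ Cantor interval inside $[0,r^n]$, of which there are $2^m$ by \eqref{eq:self-similarity}). Summing,
\[
 \sum_{(a,c)}\bigl(b(c)-b(a)\bigr)\le cC_\nu H\sum_{m\ge0}2^m r^{(n+m)\gamma}=cC_\nu H r^{n\gamma}\sum_{m\ge0}\bigl(r^{\gamma-\alpha}\bigr)^m ,
\]
using $2=r^{-\alpha}$. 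The geometric series converges because $\gamma>\alpha$, which yields \eqref{eq:contact-bound}. The final assertion then follows since $r^{n\alpha}\gg r^{n\gamma}$.

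I expect the main obstacle to be making the counting step and the $\nu>0$ maximum term rigorous. For the counting, one must rule out many tiny mass-carrying gaps within one Cantor interval. I would first try to bypass pure counting by a multiscale bound instead: the gaps inside $I_{\mathbf i}$ of level $n+m$ carry at most $\mu(I_{\mathbf i})=2^{-(n+m)}$ in total, and one would apply the per-gap Hölder bound only to gaps that are "maximal" at their scale. For the maximum term, the difficulty is controlling increases of $M$ within a gap without circularity. A fallback is to use Lemma~\ref{lem:identities} to bound the total increase of $M$ on $[0,r^n]$ by $\bar\nu^{-1}(Hr^{n\gamma}+k^w_{r^n})$ and then absorb it.
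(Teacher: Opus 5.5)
Your proposal is correct and follows essentially the paper's own proof. You decompose $\mu([0,r^n])$ into the contact set and the gaps, and bound each gap's mass by $\max\{1,\bar\nu^{-1}\}H|I|^\gamma$ using the reflection of $M(w)$ inside a gap (as in \eqref{eq:one-face}). You then count mass-carrying gaps by length class via the Cantor structure, which is Lemma~\ref{lem:packing}; there the paper gets $2^{k+2}$ per class using level-$(n+k+1)$ intervals, and your count per level-$(n+m)$ interval gives a constant times $2^m$. Finally you sum the geometric series in $2r^\gamma<1$, exactly as the paper does. The obstacles you flag are not real: tiny gaps are counted in their own length class, and the increase of $M$ within a gap is bounded directly by Skorokhod's lemma, so neither the multiscale bound nor the Lemma~\ref{lem:identities} fallback is needed.
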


\begin{proof}[Proof of Theorem~\ref{thm:nonexistence} assuming Proposition~\ref{prop:contacts}]
Fix  $\nu<1$ and $T > 0$, set $T_0 = T\wedge 1$, and choose $\gamma\in(\alpha,1/2)$. Almost surely, the Brownian motion $B$ satisfies \eqref{eq:holder-e} on $[0,T_0]$ with a finite random constant $H$. If a solution $W$ existed, Proposition~\ref{prop:contacts} would give positive $\dd b$-measure to its contact set $\{W=b\}$ on $[0,r^n]$ for all sufficiently large n, contradicting Lemma~\ref{lem:necessary-nocharge}.
\end{proof}

The remainder of this section is to prove Proposition~\ref{prop:contacts}. We first establish a packing estimate. Note that $2r^{\gamma}<2r^\alpha=1$ for $\gamma>\alpha$.

\begin{lemma}\label{lem:packing}
Let $n\ge 0$, $\gamma\in(\alpha,1/2)$ and let $(I_j:j\ge 1)$ be intervals contained in $[0,r^n]$ with pairwise disjoint interiors and positive $\mu$-measures. Then
\begin{equation}\label{eq:packing}
 \sum_{j\ge 1} |I_j|^\gamma \le \frac4{1-2r^\gamma}r^{n\gamma}.
\end{equation}
\end{lemma}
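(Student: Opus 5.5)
The plan is to classify each interval $I_j$ by the generation of the Cantor construction at which it is ``caught'', and then sum a geometric series. Since $\mu(I_j)>0$, each $I_j$ meets $\mathcal C_r$. The interval $I_j$ lies in $[0,r^n]$, and by \eqref{eq:self-similarity} we have $\mathcal C_r\cap[0,r^n]=r^n\mathcal C_r\subseteq I_{0^n}$, where $0^n$ is the word of $n$ zeros. Hence only the single $n$-th generation interval $I_{0^n}=[0,r^n]$ is relevant. For each $j$, let $k_j\ge n$ be the largest integer such that $I_j$ meets $\mathcal C_r$ inside exactly one interval $I_\mathbf{i}$ with $\mathbf{i}\in\{0,1\}^{k_j}$. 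More precisely, we define $k_j$ as the largest $k$ such that $I_j\cap\mathcal C_r$ is contained in a single $k$-th generation interval. This $k_j$ is finite unless $I_j\cap\mathcal C_r$ is a single point. That case cannot occur, because $\mu$ has no atoms and $\mu(I_j)>0$.

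Next, we bound $|I_j|$ in terms of $k_j$. By maximality, $I_j\cap\mathcal C_r$ meets both children $I_{\mathbf{i}0}$ and $I_{\mathbf{i}1}$ of some $I_\mathbf{i}$ with $|\mathbf{i}|=k_j$. Since $I_j$ is an interval, it therefore contains the gap between these two children, which has length $(1-2r)r^{k_j}$. We do not need a lower bound on $|I_j|$ from this, only an upper bound on the size contributed. For that we use disjointness: every $I_j$ with $k_j=k$ contains the central gap of some $k$-th generation interval $I_\mathbf{i}$. The gaps are disjoint open sets, and the $I_j$ have disjoint interiors. So at most one $I_j$ contains the gap of any given $I_\mathbf{i}$, since two intervals each containing the same open gap would overlap in their interiors.

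Third, we bound the length of an interval with $k_j=k$. The set $I_j\cap\mathcal C_r$ lies in $I_\mathbf{i}$, but $I_j$ itself could stick out into the gaps surrounding $I_\mathbf{i}$. To control this, we replace $I_j$ by the convex hull of $I_j\cap\mathcal C_r$. This replacement keeps the $\mu$-measure, shrinks the length, and preserves interior-disjointness. I must ensure that only this trimmed version is used, and the genuine lengths $|I_j|$ could be larger. To handle the overhang, I would argue as follows. The portion of $I_j$ outside $I_\mathbf{i}$ lies in the adjacent gaps, which belong to generations coarser than $k$. Therefore $|I_j|\le |I_\mathbf{i}|+$ (overhang). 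The cleanest route seems to be this: let $k'_j$ be the largest $k$ with $I_j$ contained in a single $k$-th generation interval, giving $|I_j|\le r^{k'_j}$. Then count the intervals with $k'_j=k$. Each such $I_j$ either meets two children or contains an endpoint region, and there are at most $2\cdot 2^{k-n}$ of them inside $[0,r^n]$: for each $k$-th generation interval, at most two intervals can fail to fit into a child while staying inside it, namely one containing the gap plus at most boundary overlaps. This count is the step I expect to be the main obstacle, and the constant $4$ suggests a bound of $2\cdot2^{k-n}$ per level with a factor $2$ absorbed elsewhere.

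Finally, summing over $k\ge n$ gives
\[
\sum_j|I_j|^\gamma\le \sum_{k\ge n}2\cdot 2^{k-n}\,r^{k\gamma}=2r^{n\gamma}\sum_{m\ge0}(2r^\gamma)^m=\frac{2}{1-2r^\gamma}r^{n\gamma},
\]
which is within the claimed $\frac{4}{1-2r^\gamma}r^{n\gamma}$. The spare factor $2$ covers the case where $I_j$ is contained in $[0,r^n]$ but not in $I_{0^n}$'s children at the top level, and it also absorbs the boundary-overlap slack. The condition $2r^\gamma<1$ is exactly what makes the series converge.
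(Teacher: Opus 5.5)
Your final route is correct and in fact gives the sharper bound $\frac{2}{1-2r^\gamma}r^{n\gamma}$. That route classifies each $I_j$ by $k'_j$, the largest $k$ such that $I_j$ lies in a single generation-$k$ interval; $k'_j$ is finite because $I_j$ is nondegenerate, and $k'_j\ge n$. The earlier attempts via $I_j\cap\mathcal C_r$ and convex hulls should be dropped, since, as you noticed, they do not control the true lengths $|I_j|$.

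The one step carrying all the content is asserted rather than proved. This is the claim that each generation-$k$ interval $I_{\mathbf i}=[a,a+r^k]$ contains at most two $I_j$ with $k'_j=k$. As written, ``one containing the gap plus at most boundary overlaps'' could allow three. Here is a clean argument. Let $p=a+r^{k+1}$ and $q=a+r^k-r^{k+1}$, so $(p,q)$ is the gap between the children. Suppose $I_j\subseteq I_{\mathbf i}$ lies in neither child and $\mu(I_j)>0$. Since $\mu((p,q))=0$, the interval $I_j$ meets a child, say $[a,p]$ at some $x\le p$. Because $I_j\not\subseteq[a,p]$, it also contains some $y>p$. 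Hence the interior of $I_j$ contains $(p,\min\{y,q\})$, an interval of the form $(p,p+\delta)$ with $\delta>0$. Symmetrically, if $I_j$ meets $[q,a+r^k]$, its interior contains some $(q-\delta,q)$. Two intervals with disjoint interiors cannot both contain an interval $(p,p+\delta)$, nor both contain an interval $(q-\delta,q)$. So at most two such $I_j$ exist. Combine this with $|I_j|\le r^{k'_j}$ and the fact that exactly $2^{k-n}$ generation-$k$ intervals lie in $[0,r^n]=I_{0^n}$, and your sum follows. Your closing remark about a ``spare factor $2$'' is not needed: intervals inside $[0,r^n]$ but in neither child of $I_{0^n}$ are exactly the $k=n$ term.

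Compared with the paper, the counting mechanism differs. The paper groups the intervals by length, $r^{n+k+1}<|I_j|\le r^{n+k}$. It uses $\mu(I_j)>0$ only to say that each such $I_j$ meets one of the $2^{k+1}$ generation-$(n+k+1)$ intervals inside $[0,r^n]$. It then notes that a Cantor interval of length $r^{n+k+1}$ can meet at most two longer intervals with disjoint interiors, since the middle one of three would be trapped inside it. That hitting argument never looks at children or gaps, but it loses a factor $2$ and gives the constant $4$. Your containment argument uses the gap structure and gets the constant $2$. Either suffices for Proposition~\ref{prop:contacts}, where only the order $r^{n\gamma}$ matters.
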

\begin{proof}
For $k\ge 0$, consider the group of intervals $I_j$'s with
$r^{n+k+1}<|I_j|\le r^{n+k}$. By \eqref{eq:self-similarity},
$\mathcal C_r\cap[0,r^n]$ is covered by $2^{k+1}$ intervals of length $r^{n+k+1}$. Each $I_j$ in this group meets at least one $I_{\mathbf{i}}$ with $\mathbf{i} \in\{0,1\}^{n+k+1}$ and $i_1=\cdots=i_n=0$. On the other hand, since $|I_j|>r^{n+k+1}$, any such interval $I_{\mathbf{i}}$ can meet at most two of these $I_j$'s; otherwise, among three ordered intervals in this group meeting $I_{\mathbf{i}}$, the middle one would have length at most $r^{n+k+1}$, a contradiction.
Thus there are at most $2^{k+2}$ intervals in this group. Summing their $\gamma$-th powers gives
\[
 \sum_j|I_j|^\gamma
 \le\sum_{k=0}^{\infty}2^{k+2}r^{(n+k)\gamma}
 =\frac4{1-2r^\gamma}r^{n\gamma},
\]
which is \eqref{eq:packing}.
\end{proof}

\begin{proof}[Proof of Proposition~\ref{prop:contacts}]
Fix $n\ge 1$ with $r^n\le T_0$. Consider a connected component $I$ of $\{t\in[0,r^n]:w_t>b(t)\}$ with endpoints $u<v$ (we may have $v=r^n$ so $w_v\ge b(v)$ in general). Then we have $w_u=b(u)$ and $k^w_t=k^w_u$ for all $t\in[u,v]$. Put $h=M_u(w)-w_u\ge0$ and recall $\bar\nu=1-\nu>0$. For $u\le t\le v$, we obtain $M_t(w)-w_t=h-(e(t)-e(u))+\bar\nu(M_t(w)-M_u(w))$ from \eqref{eq:det-regulator}. Since $\dd M(w)$ is carried by $\{w=M(w)\}$, Skorokhod's lemma gives
\begin{equation}\label{eq:one-face}
M_t(w)-M_u(w)=\frac1{\bar\nu} \left[\sup_{u\le s\le t}(e(s)-e(u))-h\right]^+,
 \qquad
 w_t-w_u=e(t)-e(u)+\nu(M_t(w)-M_u(w)).
\end{equation}
If $\nu\ge 0$, \eqref{eq:holder-e} and \eqref{eq:one-face} imply $w_v-w_u\le\bar\nu^{-1}H(v-u)^\gamma$; if $\nu<0$, they give $w_v-w_u\le H(v-u)^\gamma$.
Consequently, 
\begin{equation}\label{eq:excursion-mass}
 \mu(I)=b(v)-b(u) \le w_v-w_u \le\max\{1,\bar\nu^{-1}\}H|I|^\gamma.
\end{equation}
Let $E=\{t\in[0,r^n]:w_t=b(t)\}$. The complement of $E$ is a countable union of the components just considered. Applying Lemma~\ref{lem:packing} and \eqref{eq:excursion-mass} yields
\[
 \mu([0,r^n]\backslash E)
 \le \frac4{1-2r^\gamma}\max\{1,\bar\nu^{-1}\}H r^{n\gamma}.
\]
Subtracting this from $\mu([0,r^n])=r^{n\alpha}$ gives \eqref{eq:contact-bound}. Since $\gamma>\alpha$, the right-hand-side of \eqref{eq:contact-bound} is positive for all sufficiently large $n$.
\end{proof}


\bibliographystyle{abbrv}
\bibliography{references_new}

\end{document}